\documentclass[final]{siamart1116}



\title{A-posteriori error estimation and adaptivity for nonlinear parabolic equations using IMEX-Galerkin discretization of primal and dual equations
\thanks{Submitted to SIAM Journal on Scientific Computing\funding{This work was supported by the Netherlands Organisation for Scientific Research (NWO) via the Innovational Research Incentives Scheme (IRIS), Veni grant 639.031.033.}}}

\author{
  X.~Wu\thanks{Multiscale Engineering Fluid Dynamics, Eindhoven University of Technology, P.~O.~Box 513, 5600 MB Eindhoven, Netherlands
    (\email{x.wu@tue.nl}, \email{g.simsek@tue.nl}, \email{e.h.v.brummelen@tue.nl}).}
  \and
  K.~G.~van der Zee\thanks{ School of Mathematical Sciences, University of Nottingham, University Park, NG7 2RD Nottingham, United Kingdom (\email{kg.vanderzee@nottingham.ac.uk).}}
  \and
  G.~Simsek\footnotemark[2]
    \and
  E.~H.~van Brummelen\footnotemark[2]
}





\newlength{\bigfboxsep}
\setlength{\bigfboxsep}{\fboxsep}
\addtolength{\bigfboxsep}{1ex}



\newcommand{\norm}[1]{{\|#1\|}}

\newcommand{\dual}[1]{\langle#1\rangle}

%

%


\newcommand{\ds}[1]{{\displaystyle{#1}}}



\newcommand{\pd}{\partial}
\newcommand{\dd}{\mathrm{d}}



%

%

%

\newcommand{\mbf}[1]{\mathbf{#1}}

\newcommand{\mbfN}{\mbf{N}}


\newcommand{\mcal}[1]{\mathcal{#1}}

\newcommand{\mcB}{\mcal{B}}

\newcommand{\mcE}{\mcal{E}}

\newcommand{\mcI}{\mcal{I}}

\newcommand{\mcK}{\mcal{K}}

\newcommand{\mcN}{\mcal{N}}

\newcommand{\mcQ}{\mcal{Q}}
\newcommand{\mcR}{\mcal{R}}
\newcommand{\mcS}{\mcal{S}}

\newcommand{\mcV}{\mcal{V}}
\newcommand{\mcW}{\mcal{W}}

\usepackage{lipsum}
\usepackage{amsfonts}
\usepackage{graphicx}
\usepackage{color}
\usepackage{subfig}
\usepackage{multirow}
\usepackage{algpseudocode}
\usepackage{algorithm,caption}
\usepackage{amsmath}
\usepackage{amsopn}

\algnewcommand{\algorithmicgoto}{\textbf{go to}}%
\algnewcommand{\Goto}[1]{\algorithmicgoto~\ref{#1}}%

\newcommand{\clippedFig}[1]{\includegraphics[scale=0.22,viewport=100 40 480 400,clip]{#1}}

\newcommand{\clippedPlot}[1]{\includegraphics[scale=0.142]{#1}}

\newcommand{\Plott}[1]{\includegraphics[scale=0.234]{#1}}
\newsiamremark{remark}{Remark}
\crefname{remark}{Remark}{Hypotheses}
\numberwithin{theorem}{section}
\ifpdf
  \DeclareGraphicsExtensions{.eps,.pdf,.png,.jpg}
\else
  \DeclareGraphicsExtensions{.eps}
\fi

\begin{document}

\maketitle

\begin{abstract}
While many methods exist to discretize nonlinear time-dependent partial differential equations (PDEs), the rigorous estimation and adaptive control of their discretization errors remains challenging. In this paper, we present a methodology for duality-based a posteriori error estimation for nonlinear parabolic PDEs, where the full discretization of the PDE relies on the use of an implicit-explicit (IMEX) time-stepping scheme and the finite element method in space. The main result in our work is a decomposition of the error estimate that allows to separate the effects of spatial and temporal discretization error, and which can be used to drive adaptive mesh refinement and adaptive time-step selection. The decomposition hinges on a specially-tailored IMEX discretization of the dual problem. The performance of the error estimates and the proposed adaptive algorithm is demonstrated on two canonical applications: the elementary heat equation and the nonlinear Allen--Cahn phase-field model.
\end{abstract}
%
\begin{keywords}
A posteriori error estimate, Duality-based error estimate, IMEX scheme, Implicit-explicit schemes, Space-time error, Adaptivity, Parabolic PDE
\end{keywords}

\begin{AMS}
 65M15, 65M20, 65M50 
\end{AMS}

\setlength{\belowcaptionskip}{-3pt}

\section{Introduction}
Nonlinear parabolic PDEs are ubiquitous in science, however, their efficient numerical solution remains challenging. Implicit-explicit (IMEX) methods have been widely used for the time integration of complex time-dependent PDEs with terms of different type \cite{ascher2008numerical, boscarino2016high}. Recently, a number of IMEX time-stepping schemes, paired with spatial Galerkin finite-element discretizations, have been proposed for phase-field models~\cite{WisWanLowSINUM2009, GomHugJCP2011, WuZwiZeeIJNMBE2014, TieGuiACME2015, VigDalBroColCalCS2015}, which are currently a much-studied class of nonlinear parabolic problems~\cite{KimLowIFB2005, OdeHawPruM3AS2010, HilKamNguZeeM3AS2015, SheYanSINUM2015, GomZeeBOOK-CH2016}.
When the PDE solution displays alternating fast and slow variations, the numerical discretization can, obviously, benefit significantly from adaptivity in both space and time. 
\par
This paper is devoted to the development of a posteriori error estimates and corresponding adaptive algorithms for these popular discretizations. In particular, we consider dual-based error estimates that assess the discretization error with respect to user specified quantities of interest describing the goal of the analyses. The quantities of interest might, for instance, be physical quantities or some appropriate norms of the error of the solution (e.g. energy norm, $L^2$ norm). To efficiently drive adaptive mesh refinement and adaptive time-step selection, the error estimates need to address the temporal and the spatial discretization errors separately.
\par
There have been several studies on goal-oriented adaptive techniques for parabolic equations during the last decade, but mostly in the context of space-time (discontinuous) Galerkin finite element discretization, see for instance Eriksson and Johnson \cite{EriJohSINUM1991, EriJohSINUM1995a, EriJohSINUM1995b, EriJohSINUM1995c}, Schmich and Vexler \cite{SchVexSISC2008}, Carey et al. \cite{CarEstJohLarTavSISC2010}, Bermejo and Carpio \cite{BerCarSISC2009}, Braack et al. \cite{BraBurTasSISC2011}, Besier and Rannacher\cite{besier2012goal}, and Asner et al. \cite{asner2012adjoint}. Very little progress has been made for parabolic equations discretized using IMEX time-stepping schemes. 
\par
Recently, Chaudhry et al. \cite{chaudhry2015error, chaudhry2015posteriori} proposed a posteriori error estimates for various IMEX schemes, based on an equivalence relation between IMEX schemes and \emph{time-Galerkin} finite element methods. They rewrite the time-Galerkin method using special numerical quadrature rules and carry out a standard duality-based analysis for the resultant approximations. The splitting of the temporal and the spatial error contributions in these error estimates are commonly achieved by inserting and subtracting suitable projections of the dual solution.
\par
The objective of this paper is to present an alternative approach to duality-based a posteriori error estimates for fully discretized semi-linear parabolic PDEs using conforming finite elements in space and first-order IMEX schemes in time. Contrary to Chaudhry et al, in our approach we \emph{directly} obtain a posteriori error estimates without resorting to an interpretation of IMEX as a Galerkin-in-time method. This paper is a follow-up to our recent paper~\cite{csimcsek2015duality}, where we only considered errors due to spatial discretization. The focus of this work is on the \emph{total} discretization error which contains both the spatial and temporal parts. 
\par
The starting point of our analysis is the exact duality-based error representation, which is a duality pairing of the global space-time residual with the solution of the mean-value-linearized (backward-in-time) dual problem. This error representation can be decomposed into various distinct residuals weighted by the same dual solution. A fundamental framework for successfully decomposing the residuals for (non)linear parabolic PDEs, discretized by a classical A-stable $\theta$-scheme in time, has been developed by Verf{\"{u}}rth \cite{verfurth2013posteriori} in the context of \emph{energy}-based a posteriori error analysis. 
\par
By extending Verf{\"{u}}rth's framework to IMEX schemes and a \emph{duality}-based error analysis, we will decompose the error representation into three contributions which can be associated to the temporal and spatial discretization error, and additionally data oscillation. This novel decomposition hinges on a special nonstandard IMEX discretization of the dual problem. We then propose a general space-time adaptive algorithm for an efficient distribution of the discretization parameters: a set of time steps and the refined mesh at each time step. 
\par
This work is structured as follows. In \Cref{sec:abstract}, we introduce the abstract setting for a general (non)linear parabolic PDE and its IMEX-Galerkin discretization. \Cref{sec:section_dep} is devoted to the methodology for a space-time decomposition of a duality-based a posteriori error estimate. After having established computable error estimates in \Cref{sec:computable}, we propose the associated adaptive algorithm in \Cref{sec:adp_alg}. The application to the elementary heat equation and the nonlinear Allen--Cahn equation (an elementary phase-field model), together with numerical results are presented in \Cref{sec:applications}, after which we present our conclusions.

\section{Abstract setting} \label{sec:abstract}
In this section, we start by introducing an abstract setting of nonlinear parabolic PDEs and the corresponding dual problem in weak formulations. Then we present the discretization of the primal problem using IMEX time-stepping schemes and conforming finite elements in space.
\subsection{Weak formulation and error representation} 
As a model problem we consider a general semi-linear parabolic equation in a bounded space-time domain $\Omega \times (0,T]$ with $\Omega \subset \mathbb{R}^d$, $d = 1, 2, 3$, having natural boundary conditions. To provide a setting for the weak formulation, we denote by $\mathcal{V}$ a suitable Hilbert space and by $\mathcal{V}^{*}$ its dual space, such that $\mcV \subset L^2(\Omega) \subset \mcV^{*}$ with continuous embeddings. We denote the inner product in $L^2(\Omega)$ by $(\cdot,\cdot)$, and duality pairings between $\mcV^{*}$ and $\mcV$ by $\left\langle \cdot , \cdot \right\rangle$.
By defining $\mathcal{W} := \left\{  v \in L^2(0,T; \mathcal{V}), \partial_t v \in L^2(0,T; \mathcal{V}^{*}) \right\}$ as a suitable space for $u$, the weak form reads: find $u \in \mathcal{W}_{u^0} := \left\{  v \in \mathcal{W} : v(0) = u^0 \right\}$ such that $\forall v \in L^2(0,T; \mathcal{V}) $ 
\begin{equation}\label{eq:p_wk}
\int_0^T \Big( \langle \partial_{t} u, v \rangle + \mathcal{B}(u,v) + \mathcal{N}(u;v) \Big) \, \dd t= \int_0^T \langle f, v \rangle \, \dd t  
\end{equation}
where $f \in L^2(0,T; \mathcal{V}^{*})$, $u^0 \in L^2(\Omega)$, the semi-linear form $\mathcal{N}(\cdot ;\cdot)$ of a sufficiently smooth nonlinear operator represents the nonlinear components which is linear with respect to arguments on the right of the semicolon, and $\mathcal{B}(\cdot ,\cdot)$ is the bilinear form of a elliptic self-adjoint operator. A prime example of the abstract setting is the Allen--Cahn equation $\partial_t u - \Delta u + \frac{1}{\varepsilon^2}\psi'(u)=0$, which will be discussed later in \Cref{sec:allen}.
\par
Given the solution $u$, we consider the quantity
\begin{equation}\label{eq:qoi}
\mcQ(u) 
:= \left(\bar{q}, u(T)\right)
+ \int_0^T (q, u) \, \dd t,
\end{equation}
with $\bar{q} \in L^2(\Omega)$ and $q\in L^2(0,T;L^2(\Omega))$ so that $\mcQ: \mathcal{W} \to \mathbb{R}$ is a continuous linear functional.\footnote{One can more generally consider $q\in L^2(\tau_1,T;\mcV^*) \cup L^2(0,T;L^2(\Omega))$ where $\tau_1>0$ is the size of the first time step. For technical reasons later on (i.e., $\dual{q_0,u_0}$ must be well-defined, with $q_0$ defined in \cref{eq:q_0}), we can not take $\tau_1=0$.} One example of $\mcQ(u)$ would be the value of the solution at the final time $t=T$ at a critical area of the domain centered at $\mathbf{x}_0$, 
$
\mcQ(u) 
= \int_\Omega \rho_\epsilon (\mathbf{x}_0 - \mathbf{x}) u(\mathbf{x}, T) \, \dd \mathbf{x},
$
where $\rho_\epsilon \in C^\infty$ is a kernel function with radius and center of $\epsilon$ and $\mathbf{x}_0$. Alternatively, one might wish to estimate the error in the $L^2$ norm at the final time $T$. To achieve this, we set $\bar{q} = u(T) - \hat{u}(T)$ and $q=0$ where $\hat{u}$ is an approximation of the solution $u$. Then we have $\mcQ(u) - \mcQ(\hat{u}) = \norm{u(T)-\hat{u}(T)}^2_{L^2(\Omega)}$.
\par
For any $u, \hat{u} \in \mcV$, we denote by $\mathcal{N}^s(u,\hat{u};\cdot,\cdot)$ the mean-value linearization of $\mathcal{N}(\cdot,\cdot)$ performed at a value in between $u$ and $\hat{u}$, namely,
\begin{align}\label{eq:no}
\mathcal{N}^s(u,\hat{u};w,v) = \int_0^1 \mathcal{N}'\big(su + (1-s)\hat{u}\big) (w, v) \dd s ,  \qquad \forall w,v \in \mathcal{V}
\end{align}
where $\mathcal{N}'$ is the G\^{a}teaux derivative of $\mathcal{N}$, i.e.
$$
 \mathcal{N}'(\hat{w})(w,v) = \lim_{s \to 0} \frac{\mathcal{N}\left(\hat{w}+ s \, w ; v\right) - \mathcal{N}\left(\hat{w};v\right)}{s} \dd s, \qquad \forall w, v \in \mathcal{V}.
$$
Note that if we set $w=u-\hat{u}$, the chain rule gives
\begin{align}\label{eq:mean_l} 
\mathcal{N}^s(u,\hat{u};u-\hat{u},v)  = \mathcal{N}(u;v) - \mathcal{N}(\hat{u};v).  
\end{align}
The mean-value-linearized (backward-in-time) dual problem takes the form: find $z \in \mathcal{W}^{\bar{q}}:= \left\{  v \in \mathcal{W} : v(T) = \bar{q} \right\}$ such that $\forall w \in L^2(0,T; \mathcal{V})$
\begin{equation}\label{eq:d_wk}
\int_0^T \Big(  \langle - \partial_{t} z, w  \rangle + \mathcal{B}(z, w) + \mathcal{N}^s(u,\hat{u};w,z) \Big) \, \dd t=  \int_0^T (q,w) \, \dd t
\end{equation}
Let $\hat{u} \in \mcW$ denote any approximation of the solution $u$ in \cref{eq:p_wk}. We define the residual of the primal PDE, $\mathcal{R}_\mathrm{PDE}$, and the residual of the initial condition, $\mathcal{R}_0$, as
\begin{align} \label{eq:R_st}
\mathcal{R}_\mathrm{PDE}  (\hat{u}(t);v) &:= \langle f(t),v \rangle - \langle \partial_{t} \hat{u}(t), v \rangle - \mathcal{B}(\hat{u}(t),v) - \mathcal{N}(\hat{u}(t);v) \\
\mathcal{R}_0(\hat{u}(0);w) &:= \left( u^0 - \hat{u}(0), w\right) \label{eq:R_0}
\end{align}
for all $v \in \mcV$ and $w \in L^2(\Omega)$. Following the general framework of goal-oriented error analysis (see, e.g. \cite{BecRanAN2001, OdePruCMA2001}), we obtain an exact error representation assessing the error in $\mcQ$, which can generally be represented by a global space-time residual weighted by the solution of the dual problem~\cref{eq:d_wk}.
\begin{theorem}[Global space-time error representation]\label{thm:error_theorem}
Given any approximation $\hat{u} \in \mcW$ of the solution $u$ of the primal problem \cref{eq:p_wk}, we have the following a posteriori error representation:
\begin{equation}\label{eq:q_st_r}
\mcQ(u) - \mcQ(\hat{u} ) = \mathcal{R}_0\big( \hat{u}(0); z(0)\big) + \int_0^T  \mathcal{R}_\mathrm{PDE}  \left( \hat{u}(t); z(t)\right) \, \dd t 
\end{equation}   
where $z \in \mcW^{\bar{q}}$ is the solution of the dual problem \cref{eq:d_wk}.
\end{theorem}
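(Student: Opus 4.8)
The plan is to run the standard goal-oriented (duality) argument: test the dual problem \cref{eq:d_wk} with the error, test the primal problem \cref{eq:p_wk} with the dual solution, and reconcile the two through integration by parts in time. Set $e := u - \hat u$. Since $u,\hat u \in \mcW$ we have $e \in \mcW$, so $e \in L^2(0,T;\mcV)$ is an admissible test function in \cref{eq:d_wk}; likewise $z \in \mcW^{\bar q} \subset \mcW \subset L^2(0,T;\mcV)$ is an admissible test function in \cref{eq:p_wk}; and both $e$ and $z$ belong to $C([0,T];L^2(\Omega))$ by the standard parabolic embedding, so the time traces $e(0), e(T), z(0), z(T)$ are well defined in $L^2(\Omega)$. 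Because $\mcQ$ is linear, $\mcQ(u) - \mcQ(\hat u) = \mcQ(e) = (\bar q, e(T)) + \int_0^T (q,e)\,\dd t$.

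Next I would substitute $w = e$ into the dual problem \cref{eq:d_wk}. The nonlinear contribution collapses by the chain-rule identity \cref{eq:mean_l}, $\mathcal{N}^s(u,\hat u; e, z) = \mathcal{N}(u;z) - \mathcal{N}(\hat u;z)$, so
\[
\int_0^T (q,e)\,\dd t = \int_0^T \Big( -\langle \partial_t z, e\rangle + \mathcal{B}(z,e) + \mathcal{N}(u;z) - \mathcal{N}(\hat u;z)\Big)\dd t .
\]
Applying the integration-by-parts formula $\int_0^T\big(\langle\partial_t z, e\rangle + \langle \partial_t e, z\rangle\big)\,\dd t = (z(T),e(T)) - (z(0),e(0))$, using the terminal condition $z(T) = \bar q$, moving $(\bar q, e(T))$ to the left, and writing $\mathcal{B}(z,e) = \mathcal{B}(e,z)$ by self-adjointness, this gives
\[
\mcQ(e) = (z(0), e(0)) + \int_0^T \Big( \langle \partial_t e, z\rangle + \mathcal{B}(e,z) + \mathcal{N}(u;z) - \mathcal{N}(\hat u;z)\Big)\dd t .
\]

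To finish, split the integrand using $e = u - \hat u$ and bilinearity, and invoke the primal weak form \cref{eq:p_wk} with $v = z$ to replace $\int_0^T\big(\langle \partial_t u, z\rangle + \mathcal{B}(u,z) + \mathcal{N}(u;z)\big)\dd t$ by $\int_0^T \langle f,z\rangle\,\dd t$. What remains of the integral is precisely $\int_0^T \mathcal{R}_\mathrm{PDE}(\hat u(t); z(t))\,\dd t$ by the definition \cref{eq:R_st}. For the boundary term, since $u(0) = u^0$ we have $(z(0), e(0)) = (u^0 - \hat u(0), z(0)) = \mathcal{R}_0(\hat u(0); z(0))$ by \cref{eq:R_0}, which yields the claimed representation \cref{eq:q_st_r}.

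The only genuinely delicate point is the integration-by-parts-in-time formula and the well-posedness of the time traces: one needs $e,z \in \mcW$ so that the embedding $\mcW \hookrightarrow C([0,T];L^2(\Omega))$ (Lions--Magenes / Aubin--Lions) applies and the pairings $\langle \partial_t e, z\rangle$ and $\langle \partial_t z, e\rangle$ are integrable in time; this is exactly where the standing regularity assumptions enter, namely $\hat u \in \mcW$, $f \in L^2(0,T;\mcV^*)$, $u^0 \in L^2(\Omega)$, $\bar q \in L^2(\Omega)$, and the restriction on $q$ noted in the footnote. Everything else is bookkeeping with bilinearity of $\mathcal{B}$, linearity of $\mathcal{N}(\cdot\,;v)$ and $\mcQ$, and the two weak formulations, so no further obstacle is anticipated.
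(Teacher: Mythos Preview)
Your argument is correct and is precisely the standard duality argument the paper has in mind: the paper does not spell out a proof at all but simply declares it ``standard'' and cites \cite{van2011goal} and \cite[Theorem~2.A]{csimcsek2015duality}, both of which carry out exactly the steps you wrote (test \cref{eq:d_wk} with $e=u-\hat u$, integrate by parts in time, invoke \cref{eq:p_wk} with $v=z$, and identify the residuals). Your remarks on the $\mcW\hookrightarrow C([0,T];L^2(\Omega))$ embedding and the integrability of the duality pairings are the right justifications for the only nontrivial step.
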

\begin{proof}
The proof is standard, see, e.g. \cite{van2011goal} or \cite[Theorem 2.A]{csimcsek2015duality}. 
\end{proof}
Note that errors in norm are also included in \cref{thm:error_theorem} by suitably changing $\bar{q}$ and $q$, e.g., as in the example above.
\subsection{IMEX~-~FEM Discretization}\label{sec:section_dis}
We next describe a full discretization of problem \cref{eq:p_wk} by partitioning $[0,T]$ as $0=t_0<t_1<t_2<\cdots<t_k<\cdots<t_N=T$ into $N$ subintervals $\mathcal{I}_{k+1}=[t_k,t_{k+1}]$ of length $\tau_{k+1} = t_{k+1} - t_k$, $k=0,1,\cdots,N-1$. Because of the nonlinearity in the system \cref{eq:p_wk}, one has to be careful in choosing a time discretization to avoid prohibitive stability restrictions and high computational complexity. In this paper,  we focus on first-order IMEX time-stepping schemes, which employ a splitting of the nonlinear term $\mcN$ according to 
$$
\mathcal{N}(u; v) = \mathcal{N}_c(u; v) - \mathcal{N}_e(u; v) ,
$$
The notation $\mathcal{N}_c$ and $\mathcal{N}_e$ comes from the phase-field modeling community, and refers to the contractive and expansive part, respectively, which can also refer to the stiff and non-stiff term. The fundamental idea is to treat the contractive part \emph{implicitly} and the expansive part \emph{explicitly}. Such a time scheme for problem \cref{eq:p_wk} is defined recursively by: find $u_{k+1} \in \mathcal{V}$ such that $\forall v\in \mathcal{V} $
\begin{equation}\label{eq:p_time}
\left( \frac{u_{k+1} - u_k}{\tau_{k+1}} , v\right) + \mathcal{B}(u_{k+1}, v) + \mathcal{N}_c(u_{k+1}; v) - \mathcal{N}_e(u_{k}; v) = (f_{k+1}, v )
\end{equation}
for $k = 0,1,\cdots , N-1$, where the initial condition is 
\begin{align}\label{eq:u0}
(u_0,v) = (u^0, v) \qquad \forall v \in L^2(\Omega).
\end{align}
Here, $f_{k+1} = f(\cdot,t_{k+1})$, which is well-defined upon assuming that the function $f$ is sufficiently regular, e.g., $f\in C^0((0,T];L^2(\Omega))$. We remark that instead of the time approximation $f_{k+1}$, a time-averaged approximation $\bar{f} = \frac{1}{\tau_{k+1}}\int_{t_{k}}^{t_{k+1}} f(\cdot, t) \, \dd t$ can be used, provided that $f\in L^2(0,T;L^2(\Omega))$. We also implicitly assume in \cref{eq:p_time} that $\mcN_e(u_0;v)$ is bounded for $u_0 \in L^2(\Omega)$. If $\mcN_e(u_0;v)$ is not well-defined, one can remove this term from \cref{eq:p_time} for the first time step. For simplicity, we continue our analysis assuming that $\mcN_e(\cdot;\cdot)$ is bounded on $L^2(\Omega)\times L^2(\Omega)$.    
\par
To fully discretize the primal problem \cref{eq:p_wk}, we consider a standard shape-regular mesh $\mcK_{k}$ of $\Omega$ and an associated conforming finite element space $\mathcal{S}^{h,p}_{k}$ defined by 
$$
\mcS^{h,p}_{k} := \left\{  v \in \mcV: v(x) \vert_{K} \in \mathbb{P}^p (K), \forall K \in \mcK_k \right\}
$$
for $k=0,1,\ldots,N$, where $\mathbb{P}^p (K)$ is the space of polynomials up to order $p$ on element $K$ and $h$ denotes the mesh parameter. The fully discrete approximation is then formulated as: find $u_{k+1}^h \in \mathcal{S}^{h,p}_{k+1}$ such that $\forall v^h \in \mathcal{S}^{h,p}_{k+1} $ 
\begin{equation}\label{eq:p_full}
\left( \frac{u_{k+1}^h - u_k^h}{\tau_{k+1}} , v^h \right) + \mathcal{B}(u_{k+1}^h, v^h) + \mathcal{N}_c(u_{k+1}^h; v^h) - \mathcal{N}_e(u_{k}^h; v^h) = (f_{k+1}, v^h )
\end{equation}
for $k = 0,1,\cdots , N-1$, where the initial condition is 
\begin{equation}\label{eq:u_0h}
(u_0^h,v^h) = (u^0, v^h) \qquad \forall v^h \in \mathcal{S}^{h,p}_0.
\end{equation}
\par
We assume that the solutions $u_{\tau}:=\{u_k\}_{k=0}^N$ and $u_{\tau h}:=\{u_k^h\}_{k=0}^N$ exist for the time-discrete primal problem~\cref{eq:p_time} and the fully-discrete primal problem \cref{eq:p_full}, respectively. 

\section{Space-time decomposed a posteriori error estimate}\label{sec:section_dep}
Space-time adaptivity is heavily dependent on an appropriate decomposition of error estimates, which will be derived in this section. Our approach to isolate error contributions from different sources is inspired by the work of Verf\"{u}rth in \cite[Chapter 6]{verfurth2013posteriori}, which contains a general framework for deriving residual-based a posteriori error estimates for nonlinear parabolic problems with the $\theta$-scheme. In the following Lemma, we adapt Verf\"{u}rth's residual decomposition to our fully discrete primal problem \cref{eq:p_full}.\par
\begin{lemma}[Residual decomposition]\label{lem:residual_decomp}
Let $u_{\tau h} := \lbrace u_k^h \rbrace_{k=0}^N $ denote the solution of the fully discrete problem \cref{eq:p_full}, and $I u_{\tau h}$ denote the piecewise-linear time reconstruction of $u_{\tau h}$ on time intervals $[t_{k},t_{k+1}]$, $k=0,1,\ldots,N-1$, i.e., 
\begin{equation}
I u_{\tau h} (t) = \frac{t_{k+1} - t}{\tau_{k+1}} u_k^h + \frac{t-t_k}{\tau_{k+1}}u_{k+1}^h \qquad t \in [ t_k, t_{k+1}].
\end{equation}
Let the spatial residual $r_h^k$, the temporal residual $r_{\tau}^k $ and the data-oscillation contribution $r_f^k$ be defined, for each $k = 0,1,\cdots , N-1$, by
\begin{align}
\left\langle r_h^{k+1}, v \right\rangle :=& ( f_{k+1}, v) - \left( \frac{u^h_{k+1} - u^h_k}{\tau_{k+1}}, v\right)  -\mathcal{B}(u^h_{k+1}, v) - \mathcal{N}_{c}(u^h_{k+1}; v) + \mathcal{N}_{e} (u^h_{k};  v) \label{eq:R_s}\\
\left\langle r_{\tau}^{k+1}(t), v \right\rangle :=&\mathcal{B}(u^h_{k+1}, v) + \mathcal{N}_{c}(u^h_{k+1}; v) - \mathcal{N}_{e} (u^h_{k};  v)-\mathcal{B}(I u_{\tau h}(t),v) - \mathcal{N}(I u_{\tau h}(t);v)\label{eq:r_t}\\
\left\langle r_f^{k+1}(t), v \right\rangle :=& \left\langle f(t) - f_{k+1}, v \right\rangle. \label{eq:r_f}
\end{align}
for all $v \in \mcV$ and $t\in (t_k,t_{k+1}]$.
Then, for each $k = 0,1,\cdots , N-1$, the following decomposition of the space-time residual \cref{eq:R_st} holds:
\begin{align}
\mcR_\mathrm{PDE}(I u_{\tau h}(t);v) &=   \langle f(t),v \rangle - \langle \partial_{t} I u_{\tau h}(t), v \rangle - \mathcal{B}(I u_{\tau h}(t),v) - \mathcal{N}(I u_{\tau h}(t);v) \label{eq:R_st_uI}\\
&= \left\langle r_h^{k+1} , v\right\rangle  + \left\langle r_{\tau}^{k+1}(t), v \right\rangle + \left\langle r_f^{k+1}(t), v \right\rangle \label{eq:r_dep}
\end{align}
where $t \in (t_k, t_{k+1}]$.
\end{lemma}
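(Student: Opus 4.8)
The plan is to establish \cref{eq:r_dep} as an \emph{exact} algebraic identity, obtained from the definition \cref{eq:R_st_uI} of $\mcR_\mathrm{PDE}(I u_{\tau h}(t);v)$ by judicious adding and subtracting. Throughout, fix $k\in\{0,\dots,N-1\}$ and $t\in(t_k,t_{k+1}]$. The only structural fact I would use is that $I u_{\tau h}$ is affine on $\mathcal{I}_{k+1}$, so its time derivative on $(t_k,t_{k+1})$ --- and, read as a one-sided derivative, at the node $t_{k+1}$ --- is the constant $\tau_{k+1}^{-1}(u_{k+1}^h-u_k^h)$; hence $\langle\partial_t I u_{\tau h}(t),v\rangle = \bigl(\tfrac{u_{k+1}^h-u_k^h}{\tau_{k+1}},v\bigr)$, which is exactly the difference quotient appearing in the IMEX scheme \cref{eq:p_full}. (Equation \cref{eq:R_st_uI} itself is immediate: it is just \cref{eq:R_st} with $\hat u$ replaced by $I u_{\tau h}(t)$.)

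Then I would proceed in three insertions. First, split the source term, $\langle f(t),v\rangle = \langle f_{k+1},v\rangle + \langle f(t)-f_{k+1},v\rangle$, and identify the second summand as $\langle r_f^{k+1}(t),v\rangle$ by \cref{eq:r_f}. Second, use the definition \cref{eq:R_s} of the spatial residual to rewrite
\[
\langle f_{k+1},v\rangle - \bigl(\tfrac{u_{k+1}^h-u_k^h}{\tau_{k+1}},v\bigr)
= \langle r_h^{k+1},v\rangle + \mathcal{B}(u_{k+1}^h,v) + \mathcal{N}_c(u_{k+1}^h;v) - \mathcal{N}_e(u_k^h;v).
\]
Third, substitute this into the expression for $\mcR_\mathrm{PDE}(I u_{\tau h}(t);v)$ produced by the first two steps; the terms left over beyond $\langle r_h^{k+1},v\rangle + \langle r_f^{k+1}(t),v\rangle$ are precisely $\mathcal{B}(u_{k+1}^h,v) + \mathcal{N}_c(u_{k+1}^h;v) - \mathcal{N}_e(u_k^h;v) - \mathcal{B}(I u_{\tau h}(t),v) - \mathcal{N}(I u_{\tau h}(t);v)$, which is the definition \cref{eq:r_t} of $\langle r_\tau^{k+1}(t),v\rangle$. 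Collecting the three pieces yields \cref{eq:r_dep}.

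I do not expect a genuine obstacle: the lemma is a bookkeeping identity, the single ``idea'' being to add and subtract $f_{k+1}$ together with $\mathcal{B}(u_{k+1}^h,v)+\mathcal{N}_c(u_{k+1}^h;v)-\mathcal{N}_e(u_k^h;v)$, i.e.\ to compare the semi-discrete quantities evaluated at $t_{k+1}$ with the reconstruction $I u_{\tau h}(t)$. The only points deserving a word of care are minor: that $\partial_t I u_{\tau h}$ is read as a left derivative at the node $t_{k+1}$ (harmless, since every residual is afterwards integrated in time against $z$), and that the manipulation presumes $\mathcal{N}_e(u_k^h;\cdot)$ is well-defined on the relevant space --- which is exactly the standing assumption made after \cref{eq:u0}; if instead $\mathcal{N}_e(u_0^h;\cdot)$ is dropped from \cref{eq:p_full} on the first interval, it is dropped correspondingly from \cref{eq:R_s} and \cref{eq:r_t}, and the identity is unaffected.
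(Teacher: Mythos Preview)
Your proposal is correct and takes essentially the same approach as the paper: the paper's proof simply notes that $\partial_t I u_{\tau h} = (u_{k+1}^h - u_k^h)/\tau_{k+1}$ on $(t_k,t_{k+1}]$ and says the identity follows by straightforward substitution of the definitions \cref{eq:R_s}--\cref{eq:r_f} into \cref{eq:R_st_uI}. Your write-up merely unpacks this substitution explicitly via the add-and-subtract of $f_{k+1}$ and of $\mathcal{B}(u_{k+1}^h,v)+\mathcal{N}_c(u_{k+1}^h;v)-\mathcal{N}_e(u_k^h;v)$, which is exactly the content of that substitution.
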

\begin{proof}
Since $\partial_t I u_{\tau h} =  \frac{u^h_{k+1} - u^h_k}{\tau_{k+1}}$ on $(t_k, t_{k+1}]$, the identities in \cref{eq:r_dep} follow from a straightforward substitution in \cref{eq:R_st_uI} using the definition \cref{eq:R_s}, \cref{eq:r_t} and \cref{eq:r_f}.
\end{proof}
\begin{remark}\label{remark_residual}
We note that the spatial residuals \cref{eq:R_s} are independent of time, and due to Galerkin orthogonality, the spatial residuals will be equal to zero if $v \in \mcS^{h,p}_{k+1}$. Furthermore, upon convergence $u_k^h \to u_k$ as $h \to 0^+$, for all $k$, we also have $r_h^{k+1} \to 0$ (see \cref{eq:p_time}). Similarly, assuming sufficient smoothness in time, then $u_k^h, u_{k+1}^h \to I u_{\tau h} (t)$ for $t \in [t_k, t_{k+1}]$ as $\tau_{k+1} \to 0^+$, which implies $r_{\tau}^{k+1} (t) \to 0$ as $\tau_{k+1} \to 0^+$. This is the motivation for calling $r_{\tau}^{k+1}$ and $r_h^{k+1}$ the \emph{temporal residual} and the \emph{spatial residual}, respectively. 
\end{remark}
\subsection{Time-discrete error representation}
The first step toward a decomposition of duality-based error estimates is to introduce a time-discrete error representation identifying only the spatial discretization error. To this end, we introduce a novel and specially-tailored IMEX time-discrete dual problem. This time-discrete problem is driven by the following discrete representation of $\mcQ$. \par
Let us rewrite the piecewise-linear time reconstruction $I w_{\tau} \in \mcW$ of any sequence $w_{\tau}:=\{w_k\}_{k=0}^N$, $w_k \in \mcV$, as
\begin{equation}\label{eq:u_interp}
I w_{\tau}(\mathbf{x},t) = \sum_{k=0}^{N} w_k(\mathbf{x}) \mbfN_k(t)
\end{equation}
where
\begin{align*}
\mbfN_k(t)  :=  \left\lbrace \begin{array}{cl}
                           \ds{ \frac{t_{k+1} - t}{\tau_{k+1}} } & \text{if} \; t \in \mcI_{k+1}, k \leq N-1\\[8pt]
                          \ds{  \frac{t-t_{k-1}}{\tau_{k}} } & \text{if} \; t \in \mcI_{k}, k\geq 1 \\
                            0 & \text{otherwise}\\
                         \end{array} \right.
\end{align*}
We consider the following discrete representation of $\mcQ: \mcW \to \mathbb{R}$ when applied to $I w_{\tau}$.
\begin{lemma}\label{lem:lemma_q}
Let us define 
\begin{alignat}{2}\label{eq:q_k}
 q_{k} = \frac{1}{\tau_k} \int_0^T q \mbfN_{k} (t) \dd t \qquad \text{for } k=1,2,\ldots,N 
\end{alignat}
and 
\begin{alignat}{2}\label{eq:q_0}
q_0 = \frac{1}{\tau_1} \int_0^T q \mbfN_0(t) \dd t. 
\end{alignat}
Then, the following time-discrete representation of $\mcQ: \mcW \to \mathbb{R}$ holds
\begin{alignat}{2}\label{eq:qoi_discrete}
  \mcQ(I w_{\tau}) = \tau_1 (q_0,w_0)+ \sum_{k=1}^{N} \tau_k (q_{k},w_{k}) + \left(\bar{q}, w_N\right).
\end{alignat}
\end{lemma}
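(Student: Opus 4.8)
The plan is to expand $\mcQ(Iw_{\tau})$ using the definition \cref{eq:qoi} and the representation \cref{eq:u_interp} of the piecewise-linear reconstruction, and then rearrange the resulting integral into the claimed nodal sum. First I would substitute $u = Iw_{\tau}$ into \cref{eq:qoi}, which gives
\begin{equation*}
\mcQ(Iw_{\tau}) = \bigl(\bar q, Iw_{\tau}(T)\bigr) + \int_0^T \bigl(q, Iw_{\tau}\bigr)\,\dd t.
\end{equation*}
Since $\mbfN_k(T) = \delta_{kN}$ (only the last hat function is nonzero at $t=T$, where it equals $1$), the endpoint term reduces immediately to $(\bar q, w_N)$. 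For the integral term, I would insert the expansion $Iw_{\tau}(\mathbf{x},t) = \sum_{k=0}^N w_k(\mathbf{x})\mbfN_k(t)$ and interchange the (finite) sum with the spatial inner product, obtaining $\int_0^T (q, Iw_{\tau})\,\dd t = \sum_{k=0}^N \int_0^T \bigl(q(\cdot,t), w_k\bigr)\mbfN_k(t)\,\dd t$.

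The remaining work is to identify each summand with the corresponding term in \cref{eq:qoi_discrete}. For $k \geq 1$, by the definition \cref{eq:q_k} we have $\tau_k q_k = \int_0^T q\,\mbfN_k(t)\,\dd t$ as an element of $L^2(\Omega)$, so that $\int_0^T (q(\cdot,t), w_k)\mbfN_k(t)\,\dd t = (\tau_k q_k, w_k) = \tau_k(q_k, w_k)$, using linearity of the $L^2(\Omega)$ inner product in its first argument and the fact that $w_k$ is independent of $t$. The $k=0$ term is handled identically using the definition \cref{eq:q_0} of $q_0$, yielding $\tau_1(q_0, w_0)$. Collecting the endpoint term, the $k=0$ term, and the terms $k=1,\ldots,N$ gives exactly \cref{eq:qoi_discrete}.

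The main technical point — rather than a genuine obstacle — is justifying that $\int_0^T q(\cdot,t)\mbfN_k(t)\,\dd t$ is a well-defined element of $L^2(\Omega)$ so that pairing it with $w_k$ makes sense, and that the interchange of summation, integration, and the inner product is legitimate. This is immediate for $k\geq 1$ because $\mbfN_k$ is bounded with compact support in $[0,T]$ and $q \in L^2(0,T;L^2(\Omega))$, so $q\mbfN_k \in L^1(0,T;L^2(\Omega))$ and the Bochner integral exists; the sum is finite, so Fubini-type interchanges are trivial. For $k=0$ one must use that the first time step has positive length $\tau_1>0$ (so $\mbfN_0$ is supported on $\mcI_1$ and bounded there), which is precisely the reason the footnote after \cref{eq:qoi} forbids $\tau_1=0$; this guarantees $q_0$ is well-defined in $L^2(\Omega)$. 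With these observations in place the identity follows by direct computation, so I would present the proof compactly as the chain of equalities above.
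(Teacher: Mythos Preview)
Your proof is correct and follows essentially the same route as the paper: substitute the expansion \cref{eq:u_interp} into \cref{eq:qoi}, swap the finite sum with the inner product and time integral, split off the $k=0$ term, and identify each summand via the definitions \cref{eq:q_k}--\cref{eq:q_0}. Your additional remarks on the well-definedness of the Bochner integrals and the role of $\tau_1>0$ are more explicit than the paper's argument but do not change the approach.
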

\begin{proof}
For $I w_{\tau}$ defined in \cref{eq:u_interp}, we observe that, according to \cref{eq:qoi}, 
\begin{equation*}
\mcQ(I w_{\tau}) =\left(\bar{q}, w_N\right) + \int_0^T \left( q,\; \sum_{k=0}^{N} w_k \mbfN_k(t) \right) \dd t = \left(\bar{q}, w_N\right)  +  \sum_{k=0}^{N} \left( \int_0^T q \, \mbfN_k(t) \, \dd t, \; w_k \right).
\end{equation*}
By virtue of
$$
\sum_{k=0}^{N} \left( \int_0^T q \, \mbfN_k(t) \, \dd t, \; w_k \right) = \left( \int_0^T q \mbfN_0(t) ,w_0 \right)+ \sum_{k=1}^{N} \left( \int_0^T q \, \mbfN_{k}(t) \, \dd t, \; w_{k} \right),
$$
we obtain \cref{eq:qoi_discrete} by substituting the definition \cref{eq:q_k} and \cref{eq:q_0}.
\end{proof}
\par
We now state the novel IMEX time-stepping scheme to discretize the dual problem backwards in time: Find $z_k \in \mcV$, $k=0,1,\ldots,N,$ such that 
\begin{equation}\label{eq:dual_time0}
-\left( \frac{z_1-z_{0}}{\tau_1}, w \right) -\mathcal{N}^s_{e}(u_{0},u_{0}^h; w, z_{1})= (q_0, w) \qquad \forall w \in \mathcal{V}
\end{equation}
and for $k = 1,2, \ldots, N-1$:
\begin{multline} \label{eq:dual_time1}
-\left( \frac{z_{k+1} - z_k}{\tau_{k}}, w\right) +  \mathcal{ B}(z_k, w) +  \mathcal{N}^s_{c}(u_{k},u_{k}^h; w, z_k) \\
- \frac{\tau_{k+1}}{\tau_k}\mathcal{N}^s_{e}(u_{k},u_{k}^h; w, z_{k+1}) = ( q_{k}, w ) \qquad \forall w \in \mcV
\end{multline}
 where the terminal condition is 
\begin{equation}\label{eq:dual_time2}
\left( z_{N}, w\right) +  \tau_{N}\mathcal{ B}(z_{N}, w) + \tau_{N}\mathcal{N}^s_{c}(u_{N},u_{N}^h; w, z_{N})  = \tau_{N}( q_{N}, w ) + (\bar{q}, w)\qquad \forall w \in \mcV.
\end{equation}
The time-discrete dual \cref{eq:dual_time0}-\cref{eq:dual_time2} has been defined so as to provide an exact error representation for $\mcQ(I u_{\tau h})$ with respect to $\mcQ(I u_{\tau})$. 
\begin{theorem}[Time-discrete error representation]\label{thm:discrete_theorem}
Let $u_{\tau} = \lbrace u_k \rbrace_{k=0}^N$ denote the solution of the time discrete system \cref{eq:p_time}, and $u_{\tau h} = \lbrace u_k^h \rbrace_{k=0}^N$ denote the solution of the fully discrete system \cref{eq:p_full}. Let $z_{\tau} = \lbrace z_k \rbrace_{k=0}^N$ denote the time discrete approximation of the dual problem obtained from \cref{eq:dual_time0}-\cref{eq:dual_time2}. 
Then the following error representation holds:
\begin{equation}\label{eq:qoi_s}
\mcQ(I u_{\tau}) - \mcQ(I u_{\tau h})  =\left(u^0-u_0^h, z_{0}-v_0^h\right) + \sum_{k=1}^{N} \tau_k \left\langle r_h^k , z_k-v_k^h\right\rangle,
\end{equation}
for any $v_k^h \in \mcS^{h,p}_{k}$, $k=0,1,\ldots, N$.
\end{theorem}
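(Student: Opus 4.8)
The plan is to test the nonstandard time-discrete dual scheme \cref{eq:dual_time0}--\cref{eq:dual_time2} against the spatial error sequence $e_k := u_k - u_k^h$, and to observe that its special IMEX weighting has been reverse-engineered so that the resulting scalar identity, after a discrete summation by parts, is exactly the discrete primal error equation tested against $z_\tau$. The single ingredient needed from the primal side is a \emph{discrete primal error equation}: subtracting \cref{eq:p_full} from \cref{eq:p_time}, inserting the result into the definition \cref{eq:R_s} of $r_h^{k}$, and applying the mean-value chain rule \cref{eq:mean_l} separately to $\mcN_c$ (with linearization direction $e_{k}$) and to $\mcN_e$ (with linearization direction $e_{k-1}$), one obtains, for all $v\in\mcV$ and $k=1,\dots,N$,
\begin{equation*}
\tau_k\dual{r_h^k,v} = \bigl(e_k-e_{k-1},v\bigr) + \tau_k\mcB(e_k,v) + \tau_k\mcN^s_c(u_k,u_k^h;e_k,v) - \tau_k\mcN^s_e(u_{k-1},u_{k-1}^h;e_{k-1},v).
\end{equation*}

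Next I would test the dual scheme against the error: \cref{eq:dual_time0} with $w=e_0$ (then multiply by $\tau_1$), \cref{eq:dual_time1} with $w=e_k$ (then multiply by $\tau_k$) for $k=1,\dots,N-1$, and the terminal equation \cref{eq:dual_time2} with $w=e_N$, and add the $N+1$ resulting identities. On the right-hand side this accumulates to $\tau_1(q_0,e_0)+\sum_{k=1}^N\tau_k(q_k,e_k)+(\bar q,e_N)$, which by \cref{lem:lemma_q} applied to the sequence $\{e_k\}_{k=0}^N$ (using linearity of $\mcQ$ and of the piecewise-linear reconstruction $I$) equals precisely $\mcQ(I u_\tau)-\mcQ(I u_{\tau h})$, the left-hand side of \cref{eq:qoi_s}.

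It then remains to identify the accumulated left-hand side with $(e_0,z_0)+\sum_{k=1}^N\tau_k\dual{r_h^k,z_k}$. This is where the bookkeeping is done: the discrete time-derivative contributions $-(z_1-z_0,e_0)$, $-(z_{k+1}-z_k,e_k)$ and $(z_N,e_N)$ telescope (a summation by parts over nonuniform steps) into $(z_0,e_0)+\sum_{k=1}^N(z_k,e_k-e_{k-1})$; the $\mcB$- and $\mcN^s_c$-contributions, which occur only at level $k$ in \cref{eq:dual_time1}--\cref{eq:dual_time2}, assemble into $\sum_{k=1}^N\tau_k\mcB(z_k,e_k)$ and $\sum_{k=1}^N\tau_k\mcN^s_c(u_k,u_k^h;e_k,z_k)$; and the explicit contributions, where the nonstandard factor $\tau_{k+1}/\tau_k$ in \cref{eq:dual_time1} is cancelled by the multiplier $\tau_k$, reindex into $-\sum_{k=1}^N\tau_k\mcN^s_e(u_{k-1},u_{k-1}^h;e_{k-1},z_k)$. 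By the symmetry of $(\cdot,\cdot)$ and of $\mcB$, the total is term-by-term the sum over $k=1,\dots,N$ of the discrete primal error equation above evaluated at $v=z_k$, hence equals $(e_0,z_0)+\sum_{k=1}^N\tau_k\dual{r_h^k,z_k}$. Finally, Galerkin orthogonality (\cref{remark_residual}, a consequence of \cref{eq:p_full}) gives $\dual{r_h^k,v_k^h}=0$ for $v_k^h\in\mcS^{h,p}_k$, and \cref{eq:u0}--\cref{eq:u_0h} give $(e_0,v_0^h)=0$ for $v_0^h\in\mcS^{h,p}_0$; since moreover $u_0=u^0$ in $L^2(\Omega)$, this lets us replace $z_0$ by $z_0-v_0^h$ and each $z_k$ by $z_k-v_k^h$, delivering \cref{eq:qoi_s}.

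The computations are entirely elementary, so the only real obstacle is the careful matching of the ``boundary'' time levels in the summation by parts. The first dual equation \cref{eq:dual_time0} is deliberately degenerate --- it carries neither a $\mcB$- nor an $\mcN^s_c$-term --- consistent with the fact that, after testing and telescoping, $z_0$ may appear only inside the mass/time-derivative term; and the terminal equation \cref{eq:dual_time2} uses the mass term $(z_N,w)$ in place of a difference quotient, together with the extra load $(\bar q,w)$, which are exactly the contributions dictated by the last reconstruction function $\mbfN_N$ and by the final-time part of $\mcQ$. Verifying that these degenerate endpoints dovetail with the interior telescoping --- rather than leaving spurious leftover terms at $k=0$ or $k=N$ --- is the step that must be checked with care.
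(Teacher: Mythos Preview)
Your proposal is correct and follows essentially the same approach as the paper's proof: both test the dual scheme \cref{eq:dual_time0}--\cref{eq:dual_time2} against $e_k=u_k-u_k^h$, invoke \cref{lem:lemma_q} to recognize the right-hand side as $\mcQ(I u_\tau)-\mcQ(I u_{\tau h})$, perform the discrete summation by parts on the time-derivative terms, use the mean-value linearization \cref{eq:mean_l}, and close with Galerkin orthogonality. The only organizational difference is that you isolate the ``discrete primal error equation'' up front and then match term by term, whereas the paper substitutes the dual equations into the $\mcQ$-identity first and only afterwards applies the mean-value linearization and the primal relation \cref{eq:p_time}; one minor wording point is that your displayed primal error identity actually follows from \cref{eq:p_time} and the definition \cref{eq:R_s} alone (valid for all $v\in\mcV$), without literally ``subtracting \cref{eq:p_full}'', which lives only on $\mcS^{h,p}_{k+1}$.
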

\begin{proof}
From \cref{eq:qoi_discrete}, it follows that $\mcQ(I u_\tau) - \mcQ(I u_{\tau h}) $ can be formulated as 
\begin{align}
\mcQ(I u_\tau) - \mcQ(I u_{\tau h}) = \tau_1( q_0,u_0-u_0^h)  + (\bar{q}, u_N - u_N^h)  + \sum_{k=1}^{N} \tau_k( q_{k}, u_{k} - u_{k}^h ) \label{eq:q_h}
\end{align}
Substituting the time-discrete dual problem \cref{eq:dual_time0}--\cref{eq:dual_time2} into \cref{eq:q_h}, we get
\begin{align*}
&\mcQ(I u_\tau) - \mcQ(I u_{\tau h})\\
&\quad= \tau_1 \Bigg\{ -\left( \frac{z_1-z_{0}}{\tau_1},u_0-u_0^h \right) - \mcN_e^s(u_0,u_0^h;u_0-u_0^h,z_1)\Bigg\}  \\
&\qquad + \tau_N  \Bigg\lbrace \bigg(\frac{z_{N} }{\tau_{N}}, u_{N} - u^h_{N} \bigg) + \mathcal{B}(z_{N}, u_{N} - u^h_{N}) + \mathcal{N}^s_{c}(u_{N},u_{N}^h; u_{N} - u^h_{N}, z_{N}) \Bigg\rbrace\\
&\qquad + \sum_{k=1}^{N-1} \tau_k  \Bigg\lbrace - \bigg(\frac{z_{k+1} - z_k}{\tau_k}, u_{k} - u^h_{k} \bigg)  + \mathcal{B}(z_k, u_{k} - u^h_{k})\\
&\qquad  + \mathcal{N}^s_{c}(u_{k},u_{k}^h; u_{k} - u^h_{k}, z_k) - \frac{\tau_{k+1}}{\tau_k}\mathcal{N}^s_{e}(u_{k},u_{k}^h; u_{k} - u^h_{k}, z_{k+1}) \Bigg\rbrace
\end{align*}
After applying summation by parts on $(z_{k+1}-z_k, u_k-u_k^h)$, i.e.,
\begin{multline*}
\sum_{k=1}^{N-1} \big(u_k-u_k^h,  z_{k+1}-z_k\big) =\\
 \left(u_N-u_N^h,z_N\right) - \left(u_1-u_1^h, z_1\right) - \sum_{k=1}^{N-1} \Big( z_{k+1},  \big(u_{k+1}-u_{k+1}^h\big)-\big(u_k-u_k^h\big)\Big)
\end{multline*}
it follows that
\begin{align*}
&\mcQ(I u_\tau) - \mcQ(I u_{\tau h}) \\
&\qquad = \left(u_1-u_1^h, z_1\right) + \tau_1 \Bigg\{ -\left( \frac{z_1-z_{0}}{\tau_1},u_0-u_0^h \right) - \mcN_e^s(u_0,u_0^h;u_0-u_0^h, z_1)\Bigg\}  \\
&\qquad \quad+ \tau_N  \Bigg\lbrace \mathcal{B}(z_{N}, u_{N} - u^h_{N}) + \mathcal{N}^s_{c}(u_{N},u_{N}^h; u_{N} - u^h_{N}, z_{N}) \Bigg\rbrace\\
&\qquad \quad+ \sum_{k=1}^{N-1} \tau_k  \Bigg\lbrace \bigg(\frac{u_{k+1} - u_k}{\tau_k}, z_{k+1}\bigg) -\bigg(\frac{u^h_{k+1} - u^h_k}{\tau_k}, z_{k+1} \bigg)   + \mathcal{B}(z_{k}, u_{k} - u^h_{k})\\
&\qquad \quad+ \mathcal{N}^s_{c}(u_{k},u_{k}^h;u_{k} - u^h_{k}, z_k) - \frac{\tau_{k+1}}{\tau_k}\mathcal{N}^s_{e}(u_{k},u_{k}^h; u_{k} - u^h_{k},z_{k+1}) \Bigg\rbrace
\end{align*}
Then, by shifting the indices of the arguments of $\mcB$ and $\mathcal{N}^s_{c}$:
\begin{align*}
&  \tau_{N}\mcB(z_{N}, u_{N} - u^h_{N})  + \sum_{k=1}^{N-1} \tau_{k}\mcB(z_{k}, u_{k} -  u^h_{k}) = \sum_{k=0}^{N-1} \tau_{k+1}\mcB(z_{k+1}, u_{k+1} - u^h_{k+1}) \\
& \tau_{N}\mathcal{N}^s_{c}(u_{N},u_{N}^h;u_{N} - u^h_{N},  z_{N}) + \sum_{k=1}^{N-1} \tau_{k}\mathcal{N}^s_{c}(u_{k},u_{k}^h; u_{k} - u^h_{k},  z_{k}) \\
& \qquad \qquad \qquad \qquad \qquad \qquad \qquad \qquad  =\sum_{k=0}^{N-1} \tau_{k+1}\mathcal{N}^s_{c}(u_{k+1},u_{k+1}^h; u_{k+1} - u^h_{k+1}, z_{k+1}) 
\end{align*}
and employing the mean-value linearization property \cref{eq:mean_l} on $\mcN_c^s$ and $\mcN_e^s$, we arrive at 
\begin{multline*}
\mcQ(I u_\tau) - \mcQ(I u_{\tau h}) = \left(u_0-u_0^h, z_{0}\right) \\
+ \sum_{k=0}^{N-1} \tau_{k+1}  \Bigg\lbrace  \bigg(\frac{u_{k+1} - u_k}{\tau_{k+1}}, z_{k+1} \bigg) + \mathcal{B}(u_{k+1}, z_{k+1}) + \mathcal{N}_{c}(u_{k+1};  z_{k+1})  -\mathcal{N}_{e}(u_{k}; z_{k+1}) \\ 
-\bigg(\frac{u^h_{k+1} - u^h_k}{\tau_{k+1}}, z_{k+1} \bigg) - \mathcal{B}(u^h_{k+1},z_{k+1})  - \mathcal{N}_{c}(u_{k+1}^h;  z_{k+1}) + \mathcal{N}_{e}(u_{k}^h; z_{k+1}) \Bigg\rbrace 
\end{multline*}
After substituting the time-discrete primal problem \cref{eq:p_time} weighted by dual solution $z_{k+1}$, we finally obtain
\begin{multline*}
\mcQ(I u_\tau) - \mcQ(I u_{\tau h}) = \left(u^0-u_0^h, z_{0}\right) + \sum_{k=0}^{N-1} \tau_{k+1}  \Bigg\lbrace  \left(f_{k+1}, z_{k+1}\right) -\bigg(\frac{u^h_{k+1} - u^h_k}{\tau_{k+1}}, z_{k+1} \bigg)  \\
- \mathcal{B}(u^h_{k+1},z_{k+1}) - \mathcal{N}_{c}(u_{k+1}^h;  z_{k+1}) + \mathcal{N}_{e}(u_{k}^h; z_{k+1}) \Bigg\rbrace.
\end{multline*}
This is \cref{eq:qoi_s} by the definition in \cref{eq:R_s}, and noting that $\forall v_{k+1}^h \in \mcS^{h,p}_{k+1}$ it holds that $\dual{r_{k+1}^h,v_{k+1}^h}=0$ by \cref{eq:p_full} for $k=0,1,\ldots, N-1$ and $\left(u^0-u_0^h, v_0^h\right)=0$ $\forall v_0^h \in \mcS^{h,p}_{0}$ by \cref{eq:u_0h}.
\end{proof}
\begin{remark}\label{remark_def_q}
Note that there is an alternative discrete representation of $\mcQ$ to the one in \cref{eq:qoi_discrete}:
$$
\mcQ(I w_{\tau}) = \ds{ \sum_{k=0}^{N-1} \tau_{k+1} ( q_k, w_k ) + ( q_N, w_N ) } 
$$
where 
$$
\left\lbrace \begin{array}{l} 
\ds{ q_k = \frac{1}{\tau_{k+1}} \int_0^T q \mbfN_k(t) \dd t \qquad \text{for } k=0,1,...,N-1 }\\
\ds{ q_N = \bar{q} + \int_0^T q \mbfN_N(t) \dd t }
\end{array} \right. 
$$ 
Instead of \cref{eq:dual_time0}-\cref{eq:dual_time2}, one then would expect an alternative time-stepping scheme for the dual problem in \cref{eq:d_wk} (solved backwards in time) as: find $z_k \in \mathcal{V}$, $k = 0,1, \ldots, N$, such that 
\begin{equation*}
-\left( \frac{z_{1} - z_0}{\tau_1}, w\right) - \mathcal{N}^s_{e}(u_{0},u_{0}^h; w, z_{1}) = ( q_{0}, w ) \qquad \forall w \in \mathcal{V}
\end{equation*} 
and for $k = 1,2, \ldots, N-1$:
\begin{multline*}
-\left( \frac{z_{k+1} - z_k}{\tau_{k+1}}, w\right) +  \frac{\tau_{k}}{\tau_{k+1}} \mathcal{ B}(z_k, w) + \frac{\tau_{k}}{\tau_{k+1}}\mathcal{N}^s_{c}(u_{k},u_{k}^h; w, z_k) \\
- \mathcal{N}^s_{e}(u_{k},u_{k}^h; w, z_{k+1}) = ( q_{k}, w )   \qquad \forall w \in \mathcal{V}
\end{multline*} 
where the terminal condition is 
\begin{equation*}
(z_N,w) + \tau_{N}\mcB(z_N,w) + \tau_{N}\mcN_c^s(u_N,u_N^h;w, z_N) = (q_N,w)  \qquad \forall w \in \mcV.
\end{equation*}  
However, this alternative time scheme is equivalent to \cref{eq:dual_time0}-\cref{eq:dual_time2}, and therefore leads to exactly the same time-discrete error representation \cref{eq:qoi_s}. 
\end{remark}
\subsection{Spatial and temporal error representation}
Building on Verf\"{u}rth's residual decomposition \cref{eq:r_dep} and the time-discrete error representation \cref{eq:qoi_s}, we are now ready to state our main result: A suitable decomposition of the dual-weighted residual \cref{eq:q_st_r}.
\begin{theorem}[Decomposed error representation]\label{thm:decomp_theorem}
Let the assumptions of \cref{thm:discrete_theorem} hold. Let $u$ denote the solution of the primal problem \cref{eq:p_wk} and $z$ denote the solution of the dual problem \cref{eq:d_wk}. 
Then the following error representation holds:
\begin{align}
\mcQ(u)-\mcQ(I u_{\tau h}) =&\mcR_\mathrm{s}(u_{\tau h}; z_{\tau}-v_{\tau}^h) + \mcR_\mathrm{t}(u_{\tau h}, u_{\tau}, z, z_{\tau}) + \operatorname{Osc} \label{eq:decomQ}
\end{align}
for any $v_{\tau}^h:=\{ v_{k}^h\}_{k=0}^{N}$,  $v_{k}^h \in \mcS^{h,p}_{k}$, where 
$\mcR_\mathrm{s}(u_{\tau h}; z_{\tau}-v_{\tau}^h)$ is the spatial error representation
\begin{equation}\label{eq:qoi_rs}
\mcR_\mathrm{s}(u_{\tau h}; z_{\tau}-v_{\tau}^h) :=\left(u^0-u_0^h, z_{0}-v_0^h\right) + \sum_{k=1}^{N} \tau_k \left\langle r_h^k , z_k- v_k^h\right\rangle,
\end{equation}
$\mcR_\mathrm{t}(u_{\tau h}, u_{\tau}, z, z_{\tau})$ is the temporal error representation
\begin{multline}\label{eq:qoi_t}
\mcR_\mathrm{t}(u_{\tau h}, u_{\tau}, z,z_{\tau}) :=  \left(u^0 - u_0^h, z(0) - z_{0}\right)  \\
+ \sum_{k=0}^{N-1} \int_{t_k}^{t_{k+1}}  \Big\lbrace \mcR_\mathrm{PDE}\big(I u_{\tau h}(t);z(t)-z_{k+1}\big)  + \left\langle r_{\tau}^{k+1}(t), z_{k+1} \right\rangle \Big\rbrace \, \dd t 
\end{multline}
and $\operatorname{Osc}$ denotes the data-oscillation contribution
\begin{equation}\label{eq:qoi_f}
\operatorname{Osc} := \sum_{k=0}^{N-1} \int_{t_k}^{t_{k+1}} \left\langle r_f^{k+1}(t),z_{k+1} \right\rangle \, \dd t .
\end{equation}
\end{theorem}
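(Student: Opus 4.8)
The plan is to obtain \cref{eq:decomQ} directly from the exact error representation of \cref{thm:error_theorem}, by inserting the time-discrete dual solution $z_\tau$ and then invoking Verf\"{u}rth's residual splitting \cref{lem:residual_decomp}. First I would apply \cref{thm:error_theorem} with the particular approximation $\hat u = I u_{\tau h}$. Since $I u_{\tau h}(0) = u_0^h$, this gives
\[
\mcQ(u) - \mcQ(I u_{\tau h}) = \left(u^0 - u_0^h,\, z(0)\right) + \int_0^T \mcR_\mathrm{PDE}\!\left(I u_{\tau h}(t);\, z(t)\right)\,\dd t,
\]
where $z$ solves the continuous dual problem \cref{eq:d_wk}.

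Next I would localize the time integral on each subinterval $\mcI_{k+1} = (t_k, t_{k+1}]$ and split the dual weight there as $z(t) = \big(z(t) - z_{k+1}\big) + z_{k+1}$, using linearity of $\mcR_\mathrm{PDE}(I u_{\tau h}(t);\cdot)$ in its second slot. On the part weighted by $z_{k+1}$, I would apply the decomposition \cref{eq:r_dep}, so that the integrand becomes $\langle r_h^{k+1}, z_{k+1}\rangle + \langle r_\tau^{k+1}(t), z_{k+1}\rangle + \langle r_f^{k+1}(t), z_{k+1}\rangle$; because $r_h^{k+1}$ is time-independent, its integral over $\mcI_{k+1}$ equals $\tau_{k+1}\langle r_h^{k+1}, z_{k+1}\rangle$, and summing and shifting the index yields $\sum_{k=0}^{N-1}\tau_{k+1}\langle r_h^{k+1}, z_{k+1}\rangle = \sum_{k=1}^{N}\tau_k\langle r_h^k, z_k\rangle$. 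The part weighted by $z(t)-z_{k+1}$ stays as $\mcR_\mathrm{PDE}(I u_{\tau h}(t); z(t)-z_{k+1})$, the $r_\tau$-term contributes $\sum_{k}\int_{\mcI_{k+1}}\langle r_\tau^{k+1}(t),z_{k+1}\rangle\,\dd t$, and the $r_f$-term contributes precisely $\operatorname{Osc}$.

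Finally I would restore the arbitrary finite element functions $v_k^h \in \mcS^{h,p}_k$ by Galerkin orthogonality: \cref{eq:p_full} gives $\langle r_h^k, v_k^h\rangle = 0$ and \cref{eq:u_0h} gives $(u^0 - u_0^h, v_0^h) = 0$. These let me rewrite $\sum_{k=1}^N\tau_k\langle r_h^k, z_k\rangle = \sum_{k=1}^N\tau_k\langle r_h^k, z_k - v_k^h\rangle$ and split $(u^0 - u_0^h, z(0)) = (u^0 - u_0^h, z_0 - v_0^h) + (u^0 - u_0^h, z(0) - z_0)$. Grouping the resulting terms, the contributions $(u^0 - u_0^h, z_0 - v_0^h) + \sum_{k=1}^N\tau_k\langle r_h^k, z_k - v_k^h\rangle$ form $\mcR_\mathrm{s}$ --- which by \cref{thm:discrete_theorem} is nothing but $\mcQ(I u_\tau) - \mcQ(I u_{\tau h})$, explaining why it carries the spatial error; the contributions $(u^0 - u_0^h, z(0) - z_0) + \sum_{k=0}^{N-1}\int_{\mcI_{k+1}}\big\{\mcR_\mathrm{PDE}(I u_{\tau h}(t); z(t) - z_{k+1}) + \langle r_\tau^{k+1}(t), z_{k+1}\rangle\big\}\,\dd t$ form $\mcR_\mathrm{t}$; and the remainder is $\operatorname{Osc}$. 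This is exactly \cref{eq:decomQ}.

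I expect no analytical difficulty here; the main obstacle is purely bookkeeping --- after the index shift $k+1\mapsto k$ and the insertions of $z_{k+1}$ and $v_k^h$, one must check that the leftover pieces reassemble into precisely $\mcR_\mathrm{s}$, $\mcR_\mathrm{t}$ and $\operatorname{Osc}$ with no stray terms, the delicate point being how the single initial term $(u^0 - u_0^h, z(0))$ is distributed between the initial contributions of $\mcR_\mathrm{s}$ and $\mcR_\mathrm{t}$. The conceptual idea worth highlighting is that the splitting $z(t) = (z(t) - z_{k+1}) + z_{k+1}$ is exactly what isolates the \emph{time-discrete} dual-weighted residual, whose only surviving part is spatial in view of \cref{thm:discrete_theorem}, from the genuinely temporal contribution weighted by $z(t) - z_{k+1}$.
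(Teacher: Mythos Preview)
Your proposal is correct and follows essentially the same approach as the paper: both start from \cref{thm:error_theorem} with $\hat u = I u_{\tau h}$, add and subtract the time-discrete dual weight $z_{k+1}$ on each subinterval, invoke the residual decomposition \cref{lem:residual_decomp} on the $z_{k+1}$-weighted part, and use Galerkin orthogonality to insert $v_k^h$. The only cosmetic difference is that the paper first \emph{defines} $\mcR_\mathrm{t}$ as the remainder $\mcQ(u)-\mcQ(I u_{\tau h})-\mcR_\mathrm{s}-\operatorname{Osc}$ and then verifies it equals \cref{eq:qoi_t}, whereas you construct all three pieces simultaneously by the splitting $z(t)=(z(t)-z_{k+1})+z_{k+1}$; the algebra is identical.
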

\begin{remark}
By virtue of Galerkin orthogonality,  $\mcR_\mathrm{s}(u_{\tau h}; z_{\tau})$ will vanish if $z_k \in \mcS^{h,p}_{k}$, $k=0,1,\ldots,N$. In addition, as $h \to 0^+$ it holds that $u_k^h \to u_k$ for all $k$ and, accordingly, $\mcR_\mathrm{s}(u_{\tau h}; \cdot)\to 0$ (see \cref{eq:p_time} and \cref{eq:u0}). Similarly, assuming sufficient smoothness in time, then $z_{k}, z_{k+1} \to z(t)$ for $t \in [t_k, t_{k+1}]$ as $\tau_{k+1} \to 0^+$, which implies $ \left(u^0 - u_0^h, z(0) - z_{0}\right) \to 0$ and $\mcR_\mathrm{PDE}(I u_{\tau h}(t);z(t)-z_{k+1}) \to 0$ as $\tau_{k+1} \to 0^+$. And since $r_{\tau}^{k+1} (t) \to 0$ as $\tau_{k+1} \to 0^+$ (see \cref{remark_residual}), we conclude that $\mcR_\mathrm{t}(u_{\tau h}, u_{\tau}, z,z_{\tau}) \to 0$ as $\tau_{k+1} \to 0^+$. This is the motivation for calling $\mcR_\mathrm{t}(u_{\tau h}, u_{\tau}, z,z_{\tau})$ and $\mcR_\mathrm{s}(u_{\tau h}; z_{\tau})$ the \emph{temporal error representation} and the \emph{spatial error representation}, respectively. 
\end{remark}
 \begin{remark}
 If we choose $f_{k+1} = \frac{1}{\tau_{k+1}}\int_{t_{k}}^{t_{k+1}} f(t) \, \dd t$, then the data-oscillation contribution~\cref{eq:qoi_f} will vanish.
 \end{remark}
\begin{proof}(of Theorem~\ref{thm:decomp_theorem})~
The global space-time error representation is \cref{thm:error_theorem} with $\hat{u}=I u_{\tau h}$:
\begin{align}
\mcQ(u)-\mcQ(I u_{\tau h}) = &\mathcal{R}_0\left( u_0^h;z(0)\right)  + \int_0^T  \mathcal{R}_\mathrm{PDE} \left( I u_{\tau h}(t); z(t)\right) \, \dd t.  \label{eq:qoi_st}
\end{align}
The spatial error representation \cref{eq:qoi_rs} satisfies the representation in \cref{thm:discrete_theorem}.\par
The temporal error representation is obtained by subtracting the spatial error representation \cref{eq:qoi_rs} and the data-oscillation contribution \cref{eq:qoi_f} from the space-time error representation \cref{eq:qoi_st}, i.e.,
\begin{align*}
\mcR_\mathrm{t}(u_{\tau h},  u_{\tau},z,z_{\tau}) = & \mcQ(u) -\mcQ(I u_{\tau h}) - \mcR_\mathrm{s}(u_{\tau h}; z_{\tau}) -  \operatorname{Osc} \\[5pt]
=&\mathcal{R}_0\left( u_0^h; z(0)\right) -\left( z_{0}, u^0-u_0^h\right) - \operatorname{Osc} \\
&+ \sum_{k=0}^{N-1} \int_{t_k}^{t_{k+1}}  \Big\lbrace  \mcR_\mathrm{PDE}\big(I u_{\tau h}(t);z(t)\big) - \left\langle r_h^{k+1}, z_{k+1} \right\rangle \Big\rbrace \, \dd t .
\end{align*}
Adding and subtracting $\mcR_\mathrm{PDE}(I u_{\tau h};z_{k+1})$ yields
\begin{multline*}
\mcR_\mathrm{t}(u_{\tau h}, u_{\tau},z,z_{\tau}) = \mathcal{R}_0\left( u_0^h; z(0)\right) -\left( z_{0}, u^0-u_0^h\right) -\operatorname{Osc} \\
+ \sum_{k=0}^{N-1} \int_{t_k}^{t_{k+1}} \! \Big\lbrace  \mcR_\mathrm{PDE}\big(I u_{\tau h}(t);z(t)-z_{k+1}\big) +\mcR_\mathrm{PDE}\left(I u_{\tau h}(t);z_{k+1}\right)- \left\langle r_h^{k+1}, z_{k+1} \right\rangle \Big\rbrace \, \dd t .
\end{multline*}
Since $\partial_t I u_{\tau h} =  \big(u^h_{k+1} - u^h_k\big)/\tau_{k+1}$ on $(t_k, t_{k+1}]$ according to the definition of $I u_{\tau h}$ \cref{eq:u_interp}, we employ the definition of the residuals in \cref{eq:R_st}, \cref{eq:R_0} and \cref{eq:R_s}, and obtain
\begin{align*} 
\mcR_\mathrm{t}(u_{\tau h},& u_{\tau},z, z_{\tau}) \\
&= \left( u^0-u_0^h, z(0)-z_{0}\right)  - \operatorname{Osc} + \sum_{k=0}^{N-1} \int_{t_k}^{t_{k+1}}  \Big\lbrace\dual{f(t),z_{k+1}} -\left(f_{k+1},z_{k+1}\right) \Big\rbrace \, \dd t \\
&\quad+ \sum_{k=0}^{N-1} \int_{t_k}^{t_{k+1}}  \Big\lbrace \mcR_\mathrm{PDE}\big(I u_{\tau h}(t);z(t)-z_{k+1}\big) - \mathcal{B} \left(I u_{\tau h}(t)-u^h_{k+1} , z_{k+1} \right) \\
&\quad- \mathcal{N}\left(I u_{\tau h}(t); z_{k+1}\right)  + \mathcal{N}_{c} \left(u_{k+1}^h; z_{k+1}\right) - \mathcal{N}_{e} \left(u^h_{k}; z_{k+1}\right) \Big\rbrace \, \dd t .
\end{align*}
Finally, substituting the definitions in \cref{eq:r_t}, \cref{eq:r_f} and \cref{eq:qoi_f} gives the result \cref{eq:qoi_t}.
\end{proof}
A useful interpretation of the spatial and the temporal error representation can be obtained by writing the global space-time error as:
\begin{equation}\label{eq:error}
\mcQ(u) - \mcQ(I u_{\tau h}) = \mcQ(u) - \mcQ(I u_{\tau}) + \mcQ(I u_{\tau}) - \mcQ(I u_{\tau h}) 
\end{equation}
The following Corollary holds:
\begin{corollary}
Under the assumptions of \cref{thm:discrete_theorem} and \cref{thm:decomp_theorem}, we have
\begin{equation} \label{eq:error_time}
\mcQ(u)- \mcQ(I u_{\tau}) = \mcR_\mathrm{t}(u_{\tau h}, u_{\tau},z, z_{\tau}) + \operatorname{Osc}
\end{equation}
and 
\begin{equation}\label{eq:error_space}
\mcQ(I u_{\tau})- \mcQ(I u_{\tau h}) = \mcR_\mathrm{s}(u_{\tau h}; z_{\tau} - v_{\tau}^h) 
\end{equation}
for any $v_{\tau}^h:=\{ v_{k}^h\}_{k=0}^{N}$,  $v_{k}^h \in \mcS^{h,p}_{k}$.
\end{corollary}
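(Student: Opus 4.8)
The plan is to obtain both identities directly from the two error representations already established, with essentially no new computation. The second identity \cref{eq:error_space} is simply the statement of \cref{thm:discrete_theorem}: its right-hand side \cref{eq:qoi_s} is by definition $\mcR_\mathrm{s}(u_{\tau h}; z_{\tau}-v_{\tau}^h)$ as given in \cref{eq:qoi_rs}, so nothing remains to be done beyond invoking that theorem (and noting that its conclusion holds for every admissible $v_{\tau}^h$).

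For the first identity \cref{eq:error_time}, I would start from the additive splitting \cref{eq:error},
\[
\mcQ(u)-\mcQ(I u_{\tau h}) = \big(\mcQ(u)-\mcQ(I u_{\tau})\big) + \big(\mcQ(I u_{\tau})-\mcQ(I u_{\tau h})\big),
\]
then substitute the decomposed error representation \cref{eq:decomQ} of \cref{thm:decomp_theorem} for the left-hand side, and substitute \cref{eq:error_space} (just established) for the second bracket on the right. The term $\mcR_\mathrm{s}(u_{\tau h}; z_{\tau}-v_{\tau}^h)$ then appears on both sides and cancels, leaving precisely $\mcQ(u)-\mcQ(I u_{\tau}) = \mcR_\mathrm{t}(u_{\tau h}, u_{\tau}, z, z_{\tau}) + \operatorname{Osc}$.

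The one point that deserves a moment's attention — and the closest thing here to an obstacle — is the role of the free parameter $v_{\tau}^h$. The quantity $\mcR_\mathrm{s}(u_{\tau h}; z_{\tau}-v_{\tau}^h)$ carries $v_{\tau}^h$ in both \cref{eq:decomQ} and \cref{eq:error_space}, whereas the left-hand side $\mcQ(u)-\mcQ(I u_{\tau})$ as well as $\mcR_\mathrm{t}$ and $\operatorname{Osc}$ are manifestly independent of $v_{\tau}^h$. The cancellation is therefore legitimate for any admissible choice, and in fact it exhibits that $\mcR_\mathrm{s}(u_{\tau h}; z_{\tau}-v_{\tau}^h)$ must itself be independent of $v_{\tau}^h$; this is consistent, being a restatement of the Galerkin orthogonality relations $\dual{r_h^{k+1}, v_{k+1}^h}=0$ for $v_{k+1}^h \in \mcS^{h,p}_{k+1}$ and $(u^0-u_0^h, v_0^h)=0$ for $v_0^h \in \mcS^{h,p}_0$ already used in the proof of \cref{thm:discrete_theorem}. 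Thus \cref{eq:error_time} and \cref{eq:error_space} hold simultaneously for every admissible $v_{\tau}^h$, with both sides of \cref{eq:error_time} genuinely $v_{\tau}^h$-free.
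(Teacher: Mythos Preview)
Your proposal is correct and follows essentially the same route as the paper: the paper likewise obtains \cref{eq:error_space} immediately from \cref{eq:qoi_s} and the definition \cref{eq:qoi_rs}, and then deduces \cref{eq:error_time} by subtracting this from the decomposed representation \cref{eq:decomQ}. Your additional remark on the $v_{\tau}^h$-independence is a helpful clarification but not needed for the argument.
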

\begin{proof}
The identity in \cref{eq:error_space} is a direct consequence of \cref{eq:qoi_s} and \cref{eq:qoi_rs}. Equation \cref{eq:error_time} then follows from \cref{eq:decomQ} and \cref{eq:qoi_rs}.
\end{proof}

\section{Computable error estimate}\label{sec:computable}
There are two approximations commonly involved in evaluating the exact error representations \cref{eq:qoi_st}, \cref{eq:qoi_rs} and \cref{eq:qoi_t}:
\begin{itemize}
\item one for approximating the exact primal solutions $u$ and $u_{\tau}$ in the mean-value-linearized dual problem \cref{eq:d_wk} and its time-discrete system \cref{eq:dual_time0}-\cref{eq:dual_time2},
\item the other for approximating the exact dual solutions $z$ and $z_{\tau}$ in the error representation formulas \cref{eq:qoi_st}, \cref{eq:qoi_rs} and \cref{eq:qoi_t}.
\end{itemize}   
The resulting error estimate can only be accurate if the approximations are sufficiently close to the true solutions. \par
Here, to obtain computable and asymptotically effective error estimates, we consider a hierarchical two-level methodology developed in \cite{csimcsek2015duality} where the estimate is directly evaluated with an enriched dual approximation that is computed with help of an additional primal approximation at an enriched discretization level for the mean-value-linearization. Since the focus of \cite{csimcsek2015duality} is on the spatial discretization error, we now extend this methodology to our problem: We need two additional discretization levels: one which is spatially-enriched for evaluating the spatial error representation \cref{eq:qoi_rs} and the other which is space-time enriched for evaluating \cref{eq:qoi_t} and \cref{eq:qoi_st}.  \par
We first introduce the following notations:
\begin{remunerate}
\item[$\bullet$] $\mathcal{S}^{h,p}_{k}$: the original FE space with spatial mesh of size $h=h_k$ at time $t_{k}$ for $k=0,1,\ldots,N$.
\item[$\bullet$] $\mathcal{S}^{h/2,p}_k$: an \emph{enriched} FE space with finer spatial mesh of size $\ds{\frac{h_{k}}{2}}$ at time $t_k$ for $k=0,1,\ldots,N$. $\mathcal{S}^{h/2,p}_k$ is obtained by global refinement of all the element in $\mathcal{S}^{h,p}_k$. 
\item[$\bullet$] $\mathcal{S}^{h/2,p}_{k + 1/2}$: an enriched FE space with finer spatial mesh of size $\ds{\frac{h_{k}}{2}}$ at the intermediate time level $\ds{t_{k+1/2}=\frac{t_{k+1}+t_k }{2}}$ for $k=0,1,\ldots,N-1$. We set $\mathcal{S}^{h/2,p}_{k + 1/2}=\mathcal{S}^{h/2,p}_{k+1}$.
\item[$\bullet$] $u_{\tau, h}=\lbrace u_{k}^{h} \rbrace_{k=0}^{N}$: the solution of \cref{eq:p_full} and \cref{eq:u_0h} using time-step sizes $\ds{\{\tau_k\}_{k=1}^N}$ and FE spaces $\ds{\{\mcS^{h,p}_{k}\}_{k=0}^{N}}$. 
\item[$\bullet$] $u_{\tau, h/2}= \lbrace u_{k}^{h/2} \rbrace_{k=0}^{N}$: the solution of \cref{eq:p_full} and \cref{eq:u_0h} using time-step sizes $\ds{\{\tau_k\}_{k=1}^N}$ and enriched FE spaces $\ds{\{\mcS^{h/2,p}_{k}\}_{k=0}^{N}}$; $u_{\tau, h/2}$ represents an approximation of the time-discrete primal solution $u_{\tau}$.
\item[$\bullet$] $u_{\tau /2, h/2} = \lbrace \tilde{u}_{l}^{h/2} \rbrace_{l=0,1/2,\ldots, N}$: the solution of \cref{eq:p_full} and \cref{eq:u_0h} using \emph{half} time-step sizes $\ds{\left\lbrace \frac{\tau_1}{2},  \frac{\tau_1}{2},  \frac{\tau_2}{2},  \frac{\tau_2}{2}, \ldots,  \frac{\tau_N}{2},  \frac{\tau_N}{2}\right\rbrace}$ and enriched FE spaces $\{ \mathcal{S}^{h/2,p}_{l} \}_{l=0,1/2,\ldots, N}$; $u_{\tau /2, h/2}$ represents an approximation of the exact primal solution~$u$. 
\item[$\bullet$] $z_{\tau, h/2} = \lbrace z_{k}^{h/2} \rbrace_{k=0}^{N}$: the solution of the approximate dual problem obtained by replacing $u_{\tau}$ with $u_{\tau, h/2} $ in \cref{eq:dual_time0}-\cref{eq:dual_time2}, using time-step sizes $\ds{\{\tau_k\}_{k=1}^N}$ and enriched FE spaces $\ds{\{\mcS^{h/2,p}_{k}\}_{k=0}^{N}}$; $z_{\tau, h/2}$ represents an approximation of the time-discrete dual $z_{\tau}$.
\item[$\bullet$] $z_{\tau /2, h/2} = \lbrace \tilde{z}_{l}^{h/2} \rbrace_{l=0,1/2,\ldots,N}$: the solution of the approximate dual problem obtained by replacing $u_{\tau}$ with $u_{\tau /2, h/2} $ in \cref{eq:dual_time0}-\cref{eq:dual_time2}, using half time-step sizes $\ds{\left\lbrace \frac{\tau_1}{2},  \frac{\tau_1}{2},  \frac{\tau_2}{2},  \frac{\tau_2}{2}, \ldots,  \frac{\tau_N}{2},  \frac{\tau_N}{2}\right\rbrace}$ and enriched FE spaces $\{ \mathcal{S}^{h/2,p}_{l} \}_{l=0,1/2,\ldots, N}$; $z_{\tau /2, h/2}$ represents the approximation of the exact dual solution $z$. 
\end{remunerate}
The strategy for computing the primal and dual solutions is illustrated in \Cref{fig:computableE}. For evaluating the error representations \cref{eq:qoi_st}, \cref{eq:qoi_rs} and \cref{eq:qoi_t}, we compute two enriched dual solutions $z_{\tau ,h/2}$ and~$z_{\tau /2, h/2}$ solved backwards in time to approximate~$z_{\tau}$ and $z$, respectively. In order to make $z_{\tau ,h/2}$ computable, an additional primal approximation $u_{\tau, h/2}$ is computed forwards in time using FE spaces $\{\mcS^{h/2,p}_{k}\}_{k=0}^N$ to approximate the mean-value-linearization of the dual problem \cref{eq:dual_time0}-\cref{eq:dual_time2}. Similarly, another additional primal approximation $u_{\tau / 2,h/2}$ is computed using spaces $\{\mcS^{h/2,p}_{l}\}_{l=0,1/2,\ldots,N}$ for obtaining $z_{\tau /2,h/2}$. 
\begin{figure}[tbhp]
\begin{center}
\includegraphics[scale=0.22]{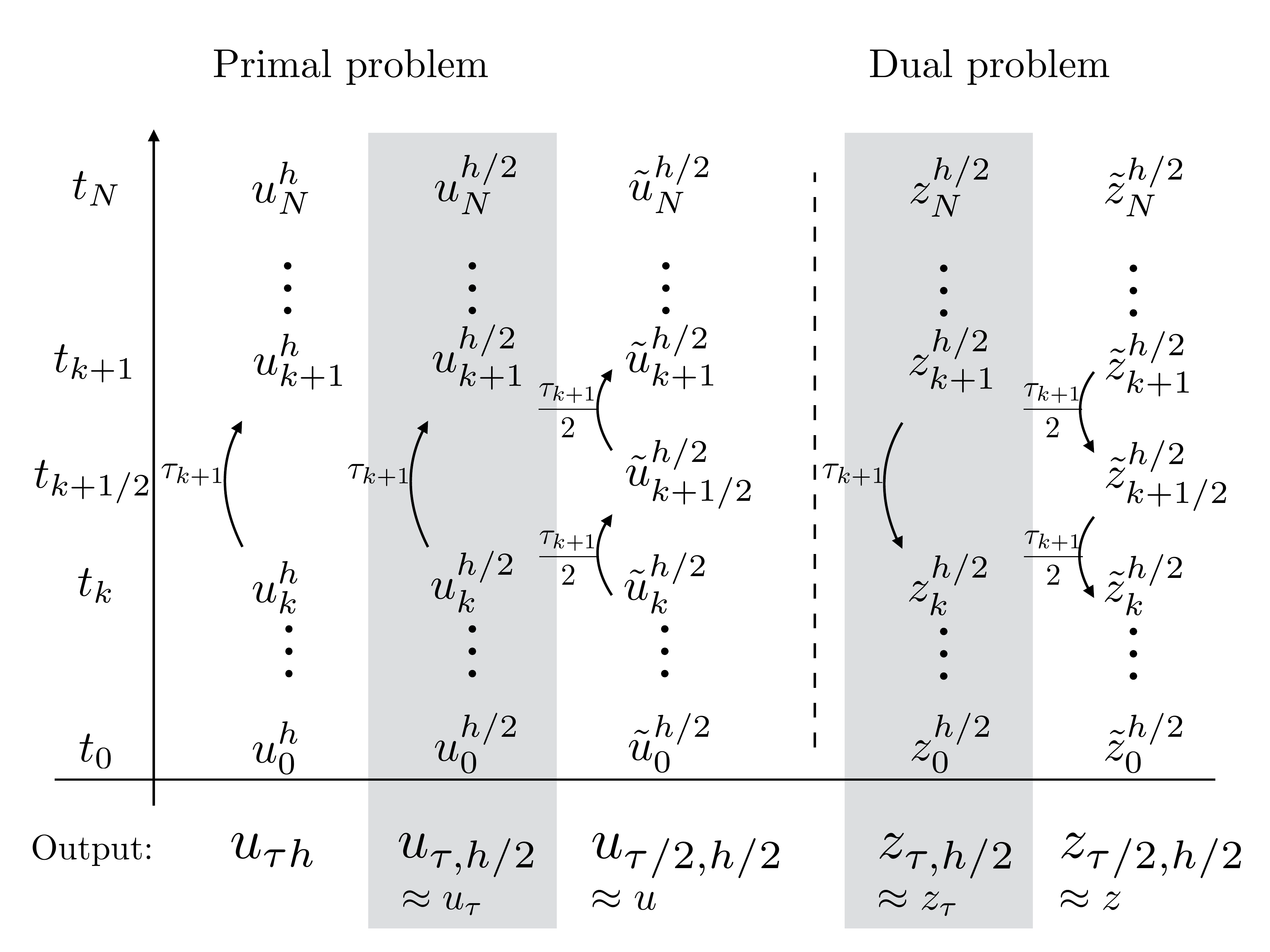} 
\end{center}
\caption{Approximations of the primal and dual solutions. The primal approximations $u_{\tau h}$, $u_{\tau,h/2}$ and $u_{\tau/2,h/2}$ are computed forwards in time and the dual approximations $z_{\tau, h/2}$, $z_{\tau,h/2}$ and $z_{\tau/2,h/2}$ are computed backwards in time with the corresponding spatial meshes and time steps. The computational cost of the algorithm can be reduced by discarding the approximations in the grey columns; see \cref{reduce_compute}.}
\label{fig:computableE}
\end{figure}
\par
Now let us denote by $\hat{z} \in \mcV$ a time-reconstruction of the dual solution $z_{\tau/2,h/2}$ (e.g. a piecewise-constant time-reconstruction will be used in numerical applications; see \Cref{sec:applications}). By replacing $z$ with $\hat{z}$ in \cref{eq:qoi_st}, the estimate of the space-time error in $\mcQ$ can then be computed as: 
\begin{equation}\label{eq:e_st}
\mcQ(u) - \mcQ(I u_{\tau h}) \approx \mathcal{E}_\mathrm{st} :=  \mathcal{R}_0\left( u_0^h; \hat{z}(0)\right) + \int_0^T  \mathcal{R}_\mathrm{PDE} \left( I u_{\tau h}(t); \hat{z}(t)\right) \, \dd t . 
\end{equation}
Replacing $z_{\tau}$ with the computable $z_{\tau, h/2}$ in \cref{eq:qoi_rs}, we compute the spatial error estimate $\mathcal{E}_\mathrm{s}$ as:   
\begin{align}\label{eq:e_s}
\mcR_\mathrm{s}(u_{\tau h}; z_{\tau}) \approx \mathcal{E}_\mathrm{s} & :=  \left(u_0^{h/2} - u_0^h, z_{0}^{h/2}\right) + \sum_{k=1}^{N} \tau_k \left\langle r_h^k, z_k^{h/2} \right\rangle.
\end{align}
Finally, replacing $z$ and $z_{\tau}$ with the computable $\hat{z}$ and $z_{\tau, h/2}$ in \cref{eq:qoi_t}, respectively,  we compute the temporal error estimate $\mathcal{E}_\mathrm{t}$ as: 
\begin{multline}\label{eq:e_t}
\mcR_\mathrm{t}(u_{\tau h}, u_{\tau},z, z_{\tau}) \approx \mathcal{E}_\mathrm{t}  :=   \left(u_0^{h/2} - u_0^h, \hat{z}(0) - z_{0}^{h/2}\right) \\
+ \sum_{k=0}^{N-1} \int_{t_k}^{t_{k+1}}  \bigg\lbrace  \mcR_\mathrm{PDE}\left(I u_{\tau h}(t) ;\hat{z}(t) -z_{k+1}^{h/2}\right)+ \left\langle r_{\tau}^{k+1}(t),z_{k+1}^{h/2} \right\rangle  \bigg\rbrace \, \dd t .
\end{multline}
\begin{remark}[Reduced-cost implementation]\label{reduce_compute}
If one wants to reduce the number of distinct approximations in the error estimates \cref{eq:e_st}--\cref{eq:e_t}, the most straightforward strategy is to simply take $u_{\tau, h/2}=u_{\tau/2, h/2}$ and $z_{\tau, h/2}=z_{\tau/2, h/2}$ at concurrent time steps (i.e. $u_k^{h/2} = \tilde{u}_k^{h/2}$ and $z_k^{h/2} = \tilde{z}_k^{h/2}$ for $k=0,1,\ldots,N$). In this manner, one only needs to compute $u_{\tau h}$, $u_{\tau/2, h/2}$ and $z_{\tau/2, h/2}$, without the gray columns in \Cref{fig:computableE}. More detailed description and examples are given in numerical applications; see \Cref{sec:applications}. An even cheaper alternative is to compute higher-order reconstructions using only $u_{\tau h}$ and $z_{\tau h}$; see, Becker and Rannacher \cite{becker1996weighted} for an overview, or coarse-scale adjoints~\cite{CarEstJohLarTavSISC2010}.
 \end{remark}

\section{Adaptive algorithm} \label{sec:adp_alg}    
Our goal is now to design an adaptive algorithm to iteratively increase the accuracy of the numerical solution by using the error estimates. In this section, we first derive error indicators of local contributions that serve as the basis to control adaptive mesh refinement and adaptive time-step selection, and then present the space-time adaptive algorithm. 
\subsection{Error indicators} 
To drive space-time adaptivity, the information of the global error estimates has to be localized to time-intervals and spatially-local contributions. To this end, we rewrite the computable error estimates $\mcE_\mathrm{st}$, $\mcE_s$ and $\mcE_\mathrm{t}$ in \cref{eq:e_st}--\cref{eq:e_t} as a sum of their local contributions on each time intervals $[t_k,t_{k+1}]$, $k=0,1,\ldots,N-1$, respectively. The absolute values of these local contributions are identified as the local indicators, which can directly be used for adaptive time-step selection. For adaptive mesh refinement, the local contributions associated to the spatial discretization error have to be localized further in space. We summarize the result in the following propositions. 
\begin{proposition} The error estimate $\mcE_\mathrm{st}$, $\mcE_\mathrm{t}$ and $\mcE_\mathrm{s}$ can be bounded from above by 
\begin{align*}
\vert  \mathcal{E}_\mathrm{st} \vert  \leq  \mathcal{E}^0_{h \tau} +\sum_{k=0}^{N-1}   \mathcal{E}^{k+1}_{h \tau} \qquad \;
\vert  \mathcal{E}_\mathrm{t} \vert  \leq \mcE^{0}_\tau + \sum_{k=0}^{N-1}  \mcE^{k+1}_\tau \qquad \;
\vert  \mathcal{E}_\mathrm{s} \vert  \leq  \mcE^{0}_h +\sum_{k=0}^{N-1}  \mcE^{k+1}_h  
\end{align*}
where the local space-time error indicators $\mcE^0_{h \tau}$ and $\mcE^{k+1}_{h \tau}$ are defined by
\begin{align} \label{eq:est_ht}
\mcE^0_{h \tau}  := \Big\vert \mathcal{R}_0\left( u_0^h; \hat{z}(0)\right) \Big\vert  \qquad \qquad \mcE^{k+1}_{h \tau}  := \bigg\vert \int_{t_k}^{t_{k+1}}  \mcR_\mathrm{PDE} \left( I u_{\tau h}(t); \hat{z}(t)\right) \, \dd t \bigg\vert ,
\end{align}
the temporal error indicators $\mcE^{0}_\tau$ and $\mcE^{k+1}_\tau$ are defined by
\begin{align}
&\mcE^{0}_\tau :=  \left\vert    \left(u_0^{h/2} - u_0^h, \hat{z}(0) - z_{0}^{h/2}\right)  \right\vert \label{eq:est_time0}\\
\begin{split}\label{eq:est_time}
&\mcE^{k+1}_\tau := \left\vert \int_{t_k}^{t_{k+1}}  \left\{  \mcR_\mathrm{PDE}\left(I u_{\tau h}(t) ;\hat{z}(t) -z_{k+1}^{h/2}\right) +\left\langle r_{\tau}^{k+1}(t), z_{k+1}^{h/2} \right\rangle \right\} \, \dd t \right\vert 
\end{split}
\end{align}
and the spatial error indicators $\mcE^{0}_h$ and $\mcE^{k+1}_h$ are defined by
\begin{align}\label{eq:est_step} 
\mcE^{0}_h := \Big\vert  \left(u_0^{h/2} - u_0^h, z_{0}^{h/2}\right)  \Big\vert \qquad  \qquad \mcE^{k+1}_h := \Big\vert \tau_{k+1} \dual{r_h^{k+1}, z_{k+1}^{h/2}} \Big \vert .              
\end{align}
\end{proposition}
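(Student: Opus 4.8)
The plan is to derive all three bounds by the same elementary two-step argument: exploit additivity of the time integral over the partition $0=t_0<t_1<\cdots<t_N=T$ to rewrite $\int_0^T(\cdot)\,\dd t=\sum_{k=0}^{N-1}\int_{t_k}^{t_{k+1}}(\cdot)\,\dd t$, isolating the initial-datum term, and then apply the triangle inequality $|a+b|\le|a|+|b|$ to pull the modulus inside the sum. Concretely, starting from the definition \cref{eq:e_st} of $\mcE_\mathrm{st}$ I would write
\begin{equation*}
\mcE_\mathrm{st} = \mathcal{R}_0\big(u_0^h;\hat{z}(0)\big) + \sum_{k=0}^{N-1}\int_{t_k}^{t_{k+1}} \mcR_\mathrm{PDE}\big(I u_{\tau h}(t);\hat{z}(t)\big)\,\dd t,
\end{equation*}
take absolute values, and apply the triangle inequality repeatedly; the surviving terms are exactly $\mcE^0_{h\tau}$ and $\mcE^{k+1}_{h\tau}$ of \cref{eq:est_ht}, which proves $|\mcE_\mathrm{st}|\le \mcE^0_{h\tau}+\sum_{k=0}^{N-1}\mcE^{k+1}_{h\tau}$.

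The same reasoning applied to \cref{eq:e_t} gives $|\mcE_\mathrm{t}|\le \mcE^0_\tau+\sum_{k=0}^{N-1}\mcE^{k+1}_\tau$ with $\mcE^0_\tau$ and $\mcE^{k+1}_\tau$ as in \cref{eq:est_time0}--\cref{eq:est_time}; the only point to watch is that the subinterval integrand carries the two contributions $\mcR_\mathrm{PDE}\big(I u_{\tau h}(t);\hat z(t)-z_{k+1}^{h/2}\big)$ and $\langle r_\tau^{k+1}(t),z_{k+1}^{h/2}\rangle$, which one keeps grouped inside a single subinterval modulus $\mcE^{k+1}_\tau$ rather than splitting further. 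For the spatial estimate I would first re-index the sum in \cref{eq:e_s}, using $\sum_{k=1}^{N}\tau_k\langle r_h^k,z_k^{h/2}\rangle=\sum_{k=0}^{N-1}\tau_{k+1}\langle r_h^{k+1},z_{k+1}^{h/2}\rangle$, and then apply the triangle inequality to recover $\mcE^0_h$ and $\mcE^{k+1}_h$ of \cref{eq:est_step}, yielding $|\mcE_\mathrm{s}|\le\mcE^0_h+\sum_{k=0}^{N-1}\mcE^{k+1}_h$.

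Since each displayed bound is a pure consequence of the triangle inequality and, in the spatial case, a shift of summation index, there is no genuine obstacle here; the ``hard part'' is merely bookkeeping. The one thing worth stating explicitly is that the decomposition is into $N+1$ pieces --- one initial contribution and $N$ subinterval contributions indexed by $k=0,\ldots,N-1$ --- matching precisely the structure of \cref{eq:e_st}--\cref{eq:e_t}, so that no term is double-counted; and that the data-oscillation term $\operatorname{Osc}$, not being part of any of these three estimates, plays no role in this proposition.
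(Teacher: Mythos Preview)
Your proposal is correct and follows essentially the same approach as the paper: write each estimate as an initial term plus a sum over subintervals (using additivity of the integral and, for $\mcE_\mathrm{s}$, the index shift you describe), then apply the triangle inequality. The paper carries this out explicitly only for $\mcE_\mathrm{st}$ and then states that the bounds for $\mcE_\mathrm{t}$ and $\mcE_\mathrm{s}$ follow ``by the same procedure,'' which is exactly what you have spelled out.
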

\begin{proof}
We split the error estimator $\mcE_\mathrm{st}$ \cref{eq:e_st} into local space-time error indicators \cref{eq:est_ht} by 
\begin{align*}
\mathcal{E}_\mathrm{st} &=  \mcR_0\left( u_0^h; \hat{z}(0)\right) + \int_0^T  \mcR_\mathrm{PDE} \left( I u_{\tau h}(t); \hat{z}(t)\right) \, \dd t \\
&=  \mcR_0\left( u_0^h; \hat{z}(0)\right) +\sum_{k=0}^{N-1} \int_{t_k}^{t_{k+1}} \mcR_\mathrm{PDE} \left( I u_{\tau h}(t); \hat{z}(t)\right) \, \dd t  \\
&\leq \left\vert \mathcal{R}_0\left( u_0^h; \hat{z}(0)\right)  \right\vert+ \sum_{k=0}^{N-1} \left\vert \int_{t_k}^{t_{k+1}}  \mcR_\mathrm{PDE} \left( I u_{\tau h}(t); \hat{z}(t)\right) \, \dd t \right\vert 
\end{align*}
Following the same procedure as above, the temporal error estimate $\mathcal{E}_\mathrm{t}$ \cref{eq:e_t} is localized to the temporal error indicators \cref{eq:est_time} on each time intervals $[t_k,t_{k+1}]$, $k=0,1,\ldots,N-1$, and the spatial error estimate $\mathcal{E}_\mathrm{s}$ \cref{eq:e_s} is localized in time to the spatial error indicators \cref{eq:est_step}. 
\end{proof}
For spatial adaptivity we consider hierarchical mesh-refinement indicators as explained in~\cite[Section 4.2]{csimcsek2015duality}. In addition, let us note that instead of a traditional element-wise marking strategy, we use the function-support marking strategy introduced in \cite{Kuru2014fk} (see also \cite{Richter2015zr, Brummelen2017rr}).
\begin{remark}
In general in \cref{eq:est_step}, one needs to subtract an interpolant of the dual solution, $\Pi z_{\tau h/2}$ from $z_{\tau, h/2}$ to get a sharp spatial indicator. However, this is not needed for hierarchical indicators; see, \cite[Section 4.2]{csimcsek2015duality} for more details.
\end{remark}
\subsection{The space-time adaptive algorithm} 
In Algorithm 1, we propose a global space-time adaptive procedure using the above duality-based indicators. The pseudocode consists of three parts: (1) The computation of the primal and dual approximations (comprised of lines 3-8, 9-11 and 13-14), (2) the evaluation of the error estimates (given in lines 12, 15, 18-19 and 23), and (3) the error control (comprised of the remaining lines of Algorithm 1).
\begin{algorithm}[tbhp]
\caption{ Duality-based space-time adaptive algorithm }
\begin{footnotesize}
\begin{algorithmic}[1]
\Require Choose a coarse spatial mesh $\mathcal{K}_0$ and a coarse time step size $\tau$
\State Initialize a list of spatial mesh $\{\mathcal{K}_k\}_{k=1}^N$ ($\mathcal{K}_k=\mcK_0$) for time steps $\{ \tau_k\}_{k=1}^N$ ($\tau_k=\tau$)
\While{the maximal error estimate $Max$ $>$ tol}
  \For{$k \in \{1,2,\ldots,N\}$} 
  \State Compute $u_{\tau h}$ in $\mathcal{K}_{k}$ with $\tau_k$ 
  \State Compute $u_{\tau, h/2}$ in $\mathcal{K}^{h/2}_{k}$ with $\tau_k$ 
  \State Compute $u_{\tau/2, h/2}$ in $\mathcal{K}^{h/2}_{k}$ with $\tau_k/2$ 
  \State $t=t+\tau_k$
  \EndFor
  \For{$k \in \{N,N-1,\ldots,1\} $} 
        \State Compute $z_{\tau, h/2}$ in $\mathcal{K}^{h/2}_{k-1}$ with $\tau_k$ 
        \State Compute $z_{\tau/2, h/2}$ in $\mathcal{K}^{h/2}_{k-1}$ with $\tau_k/2$ 
        \State Estimate the error contribution $\mathcal{E}^{k}_{h\tau} $
        \State $t=t-\tau_k$
  \EndFor
  \State Estimate the initial error contribution $\mathcal{E}^{0}_{h\tau} $
  \State Compute the maximal error contribution for the whole time period $Max =\max\lbrace \mcE^0_{h\tau},\ldots,\mcE^N_{h\tau} \rbrace$
  \While{$\vert \mathcal{E}^k_{h\tau}\vert > \theta \, \vert Max \vert$}
    \State Estimate the local temporal error indicator $\mcE^k_{\tau}$
    \State Estimate the local spatial error indicator $\mcE^k_{h}$
    \If{$\mcE^k_{\tau} \geq \mcE^k_{h}$}
    \State Refine the time step $\tau_k$ by half
    \Else
    \State Estimate $\mcE^k_i $ for the mesh $\mathcal{K}_k$
    \State Refine the mesh $\mathcal{K}_k$ by using hierarchical refinement strategy and maximum strategy with parameter $\lambda$ 
    \EndIf
  \EndWhile
\EndWhile
\end{algorithmic}
\end{footnotesize}
\end{algorithm}
\par
Within the adaptive procedure, the error control is built on a two-step approach. First,  in lines 16-17 of the pseudocode we apply the maximum marking strategy (following Babu{\v{s}}ka and Vogelius \cite{bubuvska1984feedback}) with fraction $\theta \in [0,1]$ on space-time error indicators $\{\mathcal{E}^k_{h\tau}\}_{k=0}^{N}$ to globally select time steps $\{k'\}$ (i.e. $\vert \mcE^{k'}_{h\tau} \vert \geq \theta \, \max\{ \vert \mcE^0_{h\tau} \vert,\vert \mcE^1_{h\tau}\vert,\ldots,\vert\mcE^{N}_{h\tau}\vert\}$), which contain the largest error contributions throughout the time period. Second, in line 20 we locally check the leading causes of the error at the targeted time steps $\{k'\}$, whether from the spatial error or from the temporal error. An example of this is shown and explained in \Cref{fig:adaptivS}. The figure on the left indicates the time steps $\{k'\}$, where the major error contributions are located. Then, if the spatial indicator~$\mcE^{k'}_h$ is larger than the temporal indicator $\mcE^{k'}_\tau$, the spatial mesh is targeted for refinement according to the mesh indicators $\mcE_i^{k'}$; see \Cref{fig:adaptivS} (center). Otherwise, the time step size $\tau_{k'}$ is marked and reduced by half ; see \Cref{fig:adaptivS} (right).  
\par
The adaptive spatial mesh refinement is also based on a maximum marking strategy. The nodes $\{i'\}$ are marked for which their mesh-refinement indicators are at least a fraction $\lambda \in [0,1]$ of the maximal mesh indicator (i.e. $\vert \mcE_{i'}^{k} \vert \geq \lambda \max\{\vert\mcE_0^{k}\vert,\ldots, \vert\mcE_M^{k} \vert\}$). The addition of the basis function on selected nodes is performed using hierarchical refinement for finite element methods \cite{krysl2003natural,Kuru2014fk,Richter2015zr, Brummelen2017rr}. Moreover, instead of projection, we introduce a common refinement to transfer the solution from one mesh to another without loss of accuracy in any quadrature approximations. 
\begin{remark}
A standard adaptive algorithm for time-dependent problems starts with an initial coarse mesh, and proceeds \emph{sequentially}. Based on the mesh for the current time step, a new space mesh is generated for each new time step. Such a sequential procedure commonly uses residual-based error estimates in space which only contain information at the current time step. Duality-based error estimates, on the contrary, contain the entire evolution history of the error dependence implicitly via the dual solution. 
\end{remark}
\begin{figure}[tbhp]
\includegraphics[scale=0.12]{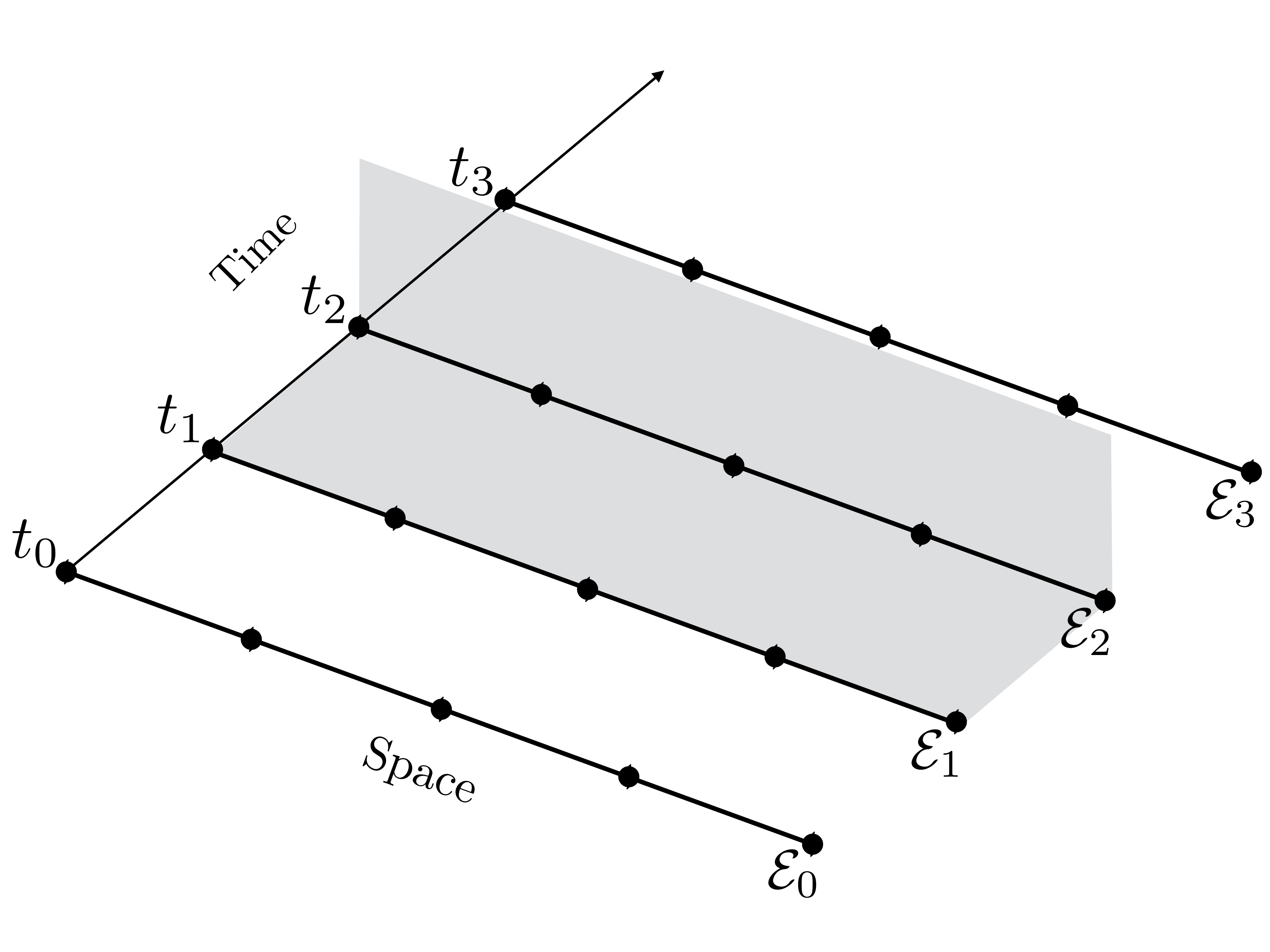}\includegraphics[scale=0.12]{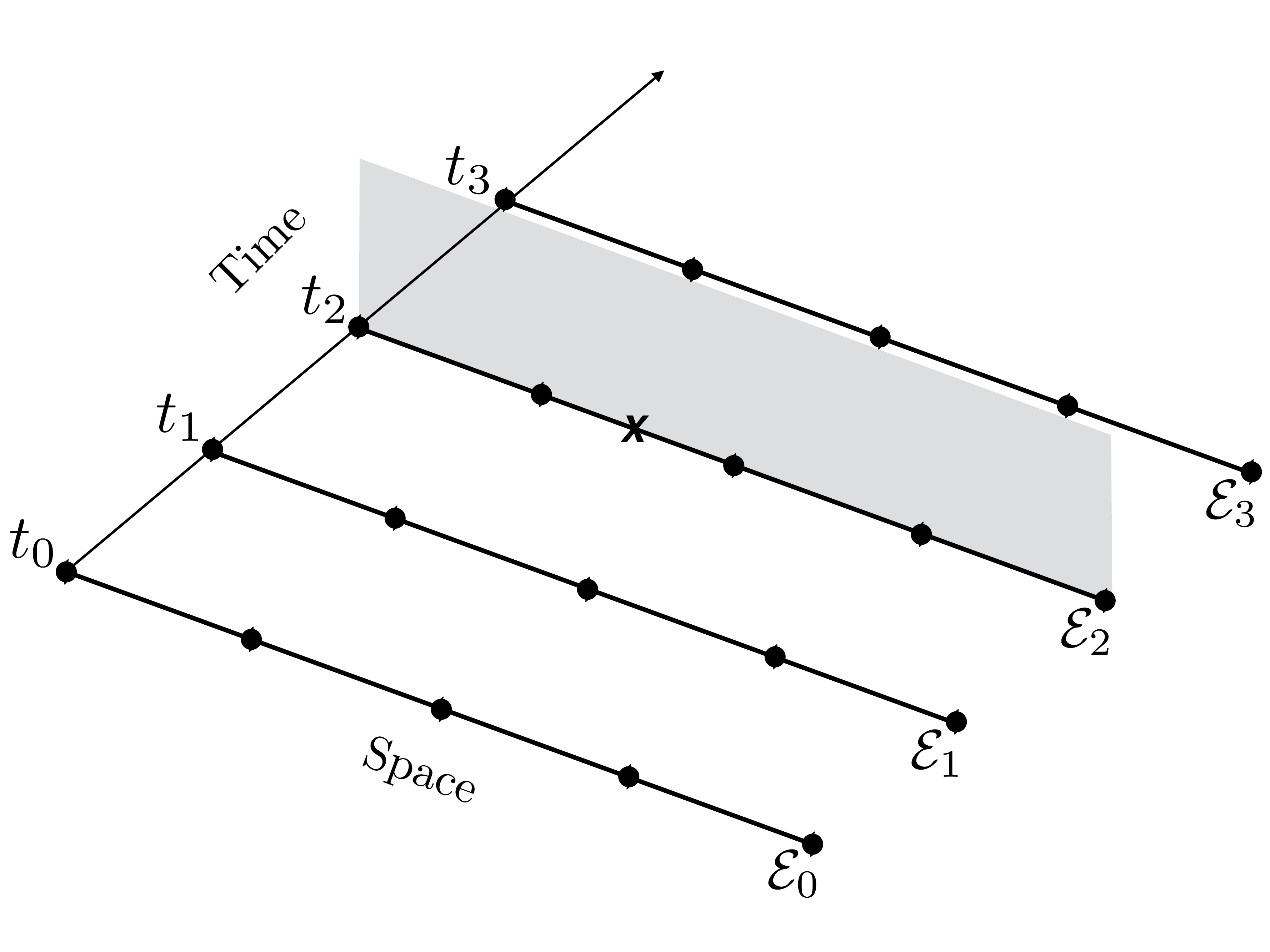}\includegraphics[scale=0.12]{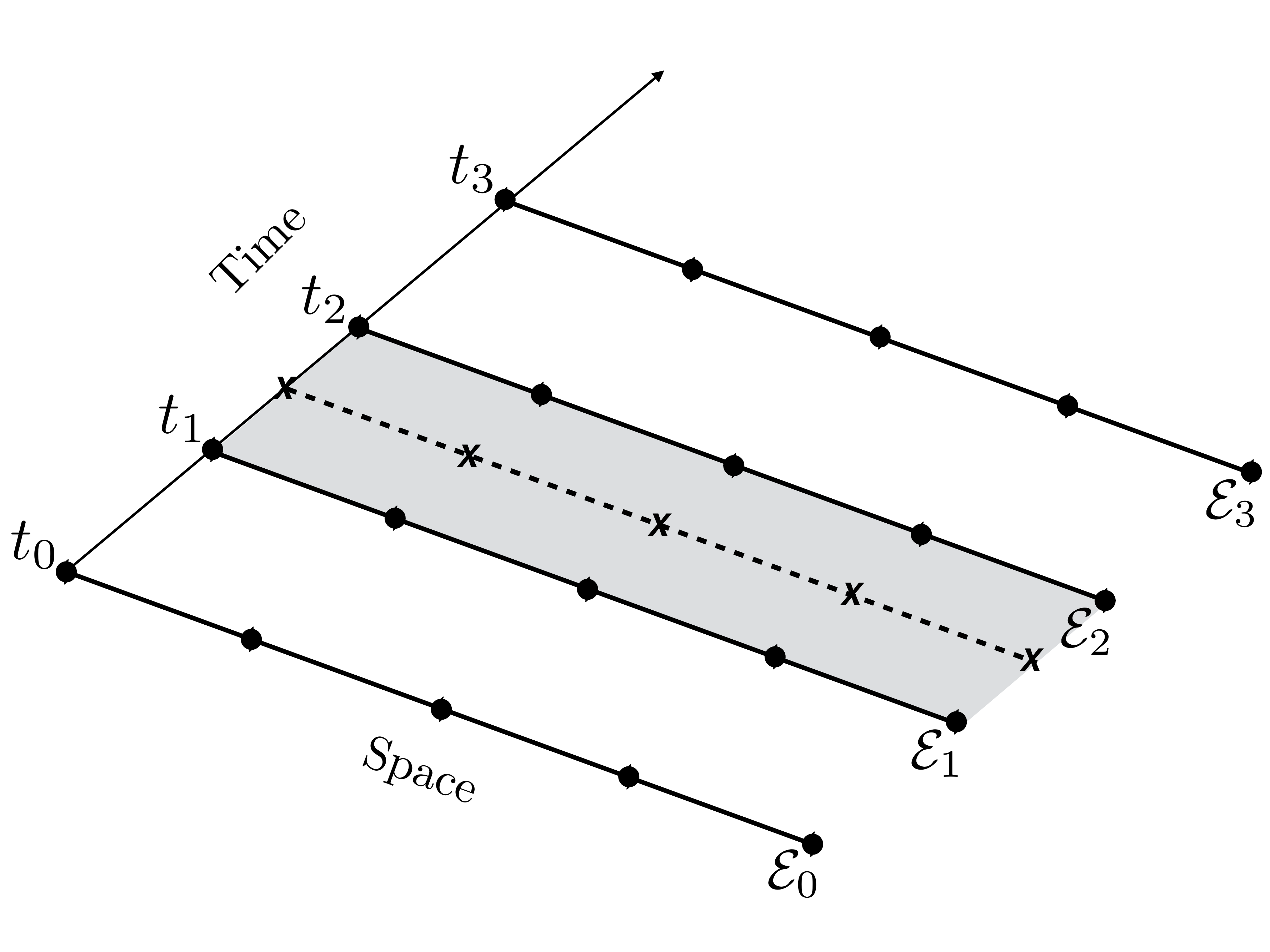}
\caption{Adaptive mesh refinement. Left: assume that $\mcE^2_{h\tau} = \max\{\mcE^0_{h\tau},\mcE^1_{h\tau},\mcE^2_{h\tau}, \mcE^3_{h\tau}\}$, i.e., the second time step $[t_1,t_2]$ is targeted as the largest error contribution throughout the time period. Middle: If the spatial indicator $\mcE^2_{h}$ is larger than the temporal indicator $\mcE^2_{\tau}$, the space mesh is targeted to refine according to the mesh indicators $\mcE^{2}_i$. Right: If the spatial indicator is smaller than the temporal indicator, the time step size is cut into half.}
\label{fig:adaptivS}
\end{figure}

\section{Applications} \label{sec:applications}
In this section, we give two examples of problems which fit into the abstract framework introduced in \cref{sec:abstract}: the nonlinear Allen--Cahn equation and the linear heat equation (as a special case of the Allen--Cahn equation). We numerically investigate the performance of the duality-based error estimates and the proposed adaptive algorithm. 
\par
Let us point out that the abstract framework easily accommodates other applications, for example, systems of parabolic equations; see~\cite[Section~6.3]{WuPhD2017} for the application to a phase-field tumor-growth system.
\subsection{Allen--Cahn equation} \label{sec:allen}
We subject the (forced) Allen--Cahn equation, $\pd_t u -\Delta u + \epsilon^{-2} \psi'(u) = f(t)$, to homogeneous Neumann boundary conditions. We choose the function spaces as $\mathcal{V}=H^1(\Omega)$, $\mathcal{V}^*=[H^{1}(\Omega)]^*$ and set $\mcB(u,v) = (\nabla u, \nabla v ) $, $\mcN(u;v) = \frac{1}{\epsilon^2} \left(\psi'(u), v\right)$ 
in \cref{eq:p_wk}, where $\epsilon$ is a parameter that controls the thickness of the diffuse interface (typical in phase-field models), and the nonlinear double-well function $\psi(u)$ is defined as (a standard truncated quartic polynomial)%
\footnote{Note that by choosing~$\psi(u) = 0$, one obtains the linear heat equation.}
\begin{equation}\label{eq:psi}
\psi(u) := \left\lbrace \begin{array}{ll}
(u+1)^2  & u < -1\\[8pt]
\ds{\frac{1}{4} (u^2-1)^2 }& u\in [-1,1]\\[8pt]
(u-1)^2 & u > 1.
\end{array} \right.
\end{equation}
Then, we obtain the weak form of the Allen--Cahn equation is: Find $u \in \mathcal{W}_{u^0}:= \left\{  v \in L^2(0,T; \mcV), \partial_t v \in L^2 \left( 0,T; \mcV^*\right) : v(0) = u^0 \right\}$ such that $\forall v \in L^2(0,T; H^1(\Omega))$
\begin{align}\label{eq:AC_wk}
\int_0^T \left( \left\langle  \partial_t u, v \right\rangle + (\nabla u, \nabla v ) + \frac{1}{\epsilon^2} \left(\psi'(u), v\right) \right) \dd t= 
\int_0^T \left\langle f,v\right\rangle \dd t.
\end{align}
In our setting the IMEX scheme for \cref{eq:AC_wk} leads to the energy-stable time-stepping scheme introduced in \cite{EyrPROC1998}: find $u_{k+1} \in H^1(\Omega)$ such that $\forall  v \in H^1(\Omega)$
\begin{align}\label{eq:AC_time}
\left( \frac{u_{k+1} - u_{k}}{\tau_{k+1}} , v \right) + (\nabla u_{k+1}, \nabla v ) + \frac{1}{\epsilon^2} \left(\psi'_c(u_{k+1}), v\right) - \frac{1}{\epsilon^2} \left(\psi'_e(u_{k}), v\right)= \left(\bar{f}_{k+1},v\right)
\end{align}
for $k=0,1,\ldots,N-1$, where the initial condition is $(u_0,v)=(u^0,v)$, $\forall v \in L^2(\Omega)$, and where we choose $\bar{f}_{k+1} = \frac{1}{\tau_{k+1}}\int_{t_{k}}^{t_{k+1}} f(\cdot, t) \, \dd t$. In particular, for a splitting of $\psi$ with a quadratic convex part, the resulting system is linear, for example:
$$
\psi = \psi_c - \psi_e = \left\lbrace \begin{array}{ll}
\ds{\left( u^2 + \frac{1}{4} \right) - \left(-2u - \frac{3}{4}\right)}& u < -1\\
\ds{\left( u^2 + \frac{1}{4} \right) - \left( \frac{3}{2} u^2 - \frac{1}{4} u^4 \right)}& u\in [-1,1]\\
\ds{\left( u^2 + \frac{1}{4} \right) - \left(2u - \frac{3}{4}\right)}& u > 1 .
\end{array} \right. 
$$
We then have the full discretization: find $u_{k+1}^h \in \mcS^{h,1}_{k+1}$ such that $ \forall  v^h \in \mathcal{S}^{h,1}_{k+1}$
\begin{multline}\label{eq:AC_full}
 \!\left( \frac{u_{k+1}^h - u_{k}^h}{\tau_{k+1}} , v^h \right) + (\nabla u_{k+1}^h, \nabla v^h ) 
\\ 
 + \frac{1}{\epsilon^2} \left(\psi'_c(u^h_{k+1}), v^h\right) - \frac{1}{\epsilon^2} \left(\psi'_e(u^h_{k}), v^h\right)= \left(\bar{f}_{k+1},v^h\right)
\end{multline}
for $k=0,1,\ldots,N-1$, where the initial condition is $(u^h_0,v^h)=(u^0,v^h)$, $\forall v^h \in \mathcal{S}^{h,1}_0$.\par
According to the definition of $\mcN^s$ in \cref{eq:no}, we can explicitly write the mean-value linearization of $\psi'(u)$ in terms of $u$ and $\hat{u}$:
\begin{align*}
\psi'^s(u,\hat{u}) = \int_0^1 \psi''(su + (1-s)\hat{u} ) \dd s,
\end{align*}
although, because of its piecewise definition \cref{eq:psi}, this is an elaborate expression. For example, for $u,\hat{u} > 1$ or $u,\hat{u} < -1$, we have $\psi'^s(u,\hat{u}) = 2$, and for $u,\hat{u} \in [-1,1]$, we have $\psi'^s(u,\hat{u}) = u^2 + \hat{u}^2 + u \hat{u} -1$. \par
Then, by setting $\mcN^s(u,\hat{u};w, z) = \frac{1}{\epsilon^2} \big(\psi'^s(u,\hat{u})z, w\big)$ in \cref{eq:d_wk}, the dual problem reads: find $z \in \mathcal{W}^{\bar{q}}:= \left\{  v \in L^2(0,T; \mcV), \partial_t v \in L^2 \left( 0,T; \mcV^* \right) : v(T)= \bar{q} \right\}$ such that $\forall w \in L^2(0,T; H^1(\Omega))$
\begin{align}\label{eq:AC_dual}
\int_0^T \Big( \left\langle  -\partial_t z, w \right\rangle + (\nabla z, \nabla w ) + \frac{1}{\epsilon^2} \left(\psi'^s(u,\hat{u})z, w\right) \Big) \, \dd t=  \int_0^T (q, w) \, \dd t,
\end{align}
And the IMEX time-discrete dual problem, based on \cref{eq:dual_time0}-\cref{eq:dual_time2}, is defined by: find $z_k \in H^1(\Omega)$, $k=0,1,\ldots, N$, such that 
\begin{align}\label{eq:AC_dualT0}
-\left( \frac{z_{1} - z_{0}}{\tau_1}, w\right) - \frac{1}{\epsilon^2} \left(\psi'^s_e(u_{0}, u_{0}^h)z_{1}, w\right)= ( q_{0}, w ) \quad \forall w \in H^1(\Omega)
\end{align}
and for $k = 1,2,\ldots, N-1$:
\begin{multline}\label{eq:AC_dualT1}
-\Big( \frac{z_{k+1} - z_k}{\tau_k},  w\Big) + (\nabla z_k, \nabla w) + \frac{1}{\epsilon^2} \left(\psi'^s_c(u_{k}, u_{k}^h)z_k, w\right) \\
- \frac{\tau_{k+1}}{\tau_k} \frac{1}{\epsilon^2} \left(\psi'^s_e(u_{k}, u_{k}^h)z_{k+1}, w\right)= ( q_{k}, w ) \qquad \forall w \in H^1(\Omega)
\end{multline} 
where the terminal condition is 
\begin{multline}\label{eq:AC_dualT2} 
\left( z_{N} , w\right) + \tau_N (\nabla z_{N}, \nabla w) + \tau_N \frac{1}{\epsilon^2} \left(\psi'^s_c(u_{N}, u_{N}^h)z_{N}, w\right) \\
= (\bar{q},w) + \tau_N(q_N,w) \qquad \forall w \in H^1(\Omega)
\end{multline} 
with~$q_k$, $k=0,1,\ldots,N$ defined in Lemma~\ref{lem:lemma_q}. Note that for our choice of $\psi_c$, the derivative $\psi'^s_c$ reduces to a constant. The main results of \Cref{sec:section_dep} hold, as shown in the following corollary.
\begin{corollary}[Decomposed error representation for Allen--Cahn equation]
The following error representation holds
\begin{align*}
\begin{split}
\mcQ(u)-\mcQ(I u_{\tau h})  &= \left(z(0), u^0 - u_0^h\right) + \sum_{k=0}^{N-1}  \int_{t_k}^{t_{k+1}} \bigg\lbrace 
\left\langle f(t),z(t)\right\rangle
- \big( \partial_t I u_{\tau h}(t), z(t) \big) \\
& \quad - \big(\nabla I u_{\tau h}(t), \nabla z(t)\big) -\frac{1}{\epsilon^2} \big(\psi'(I u_{\tau h}(t)), z(t)\big) \bigg\rbrace \, \dd t
\end{split}\\
&= \mcR_\mathrm{s}(u_{\tau h}; z_{\tau}) + \mcR_\mathrm{t}(u_{\tau h}, u_{\tau}, z, z_{\tau}),
\end{align*}
where the spatial error representation reduces to
\begin{multline*}\label{eq:AC_qs}
 \mcR_\mathrm{s}(u_{\tau h}; z_{\tau}) =  (z_{0},u^0-u_0^h) +\sum_{k=0}^{N-1} \tau_{k+1} \left\lbrace \left( \bar{f}_{k+1} ,z_{k+1} \right) -  \left( \frac{u^h_{k+1} - u^h_{k}}{\tau_{k+1}} , z_{k+1} \right)\right.\\
\left. -  ( \nabla z_{k+1}, \nabla u_{k+1}^h) - \frac{1}{\epsilon^2} \left(\psi'_c(u_{k+1}^h), z_{k+1}\right)+ \frac{1}{\epsilon^2} \left(\psi'_e(u_{k}^h), z_{k+1}\right) \right\rbrace 
\end{multline*}
and the temporal error representation reduces to
\begin{equation*}\label{eq:AC_qt}
\begin{split}
& \mcR_\mathrm{t}(u_{\tau h}, u_{\tau},z, z_{\tau}) = \left(z(0) - z_{0}, u^0 - u_0^h\right) + \sum_{k=0}^{N-1} \int_{t_k}^{t_{k+1}}  \bigg\lbrace \left\langle f(t),z(t)-z_{k+1}\right\rangle
\\ 
&\quad - \left( \frac{u^h_{k+1} - u^h_{k}}{\tau_{k+1}}, z(t)-z_{k+1} \right)  
- (\nabla I u_{\tau h}(t), \nabla z(t)) + ( \nabla u_{k+1}^h, \nabla z_{k+1}) 
\\
&\quad - \frac{1}{\epsilon^2} \big(\psi'(I u_{\tau h}(t)), z(t)\big)  
+ \frac{1}{\epsilon^2} \left(\psi'_c(u_{k+1}^h), z_{k+1}\right) - \frac{1}{\epsilon^2} \left(\psi'_e(u_{k}^h), z_{k+1}\right) \bigg\rbrace \, \dd t .
 \end{split}
\end{equation*}
\end{corollary}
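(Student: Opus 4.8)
The plan is to obtain the corollary as a direct specialization of \cref{thm:decomp_theorem}, together with \cref{thm:error_theorem}, to the concrete data of the Allen--Cahn problem: $\mcV=H^1(\Omega)$, $\mcB(u,v)=(\nabla u,\nabla v)$, $\mcN(u;v)=\epsilon^{-2}(\psi'(u),v)$ with the splitting $\mcN_c(u;v)=\epsilon^{-2}(\psi'_c(u),v)$ and $\mcN_e(u;v)=\epsilon^{-2}(\psi'_e(u),v)$, the linearized form $\mcN^s(u,\hat u;w,z)=\epsilon^{-2}(\psi'^s(u,\hat u)z,w)$, and the time-averaged data $f_{k+1}=\bar f_{k+1}=\frac{1}{\tau_{k+1}}\int_{t_k}^{t_{k+1}}f(t)\,\dd t$. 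First I would verify that this data meets the assumptions of \cref{sec:abstract}: the form $\mcB$ is bilinear, symmetric and non-negative; the truncated potential \cref{eq:psi} is globally $C^1$ with bounded second derivative, so $\mcN$ is a semilinear form with a Lipschitz G\^{a}teaux derivative and the mean-value linearization \cref{eq:no} reduces to the stated $\psi'^s$; and $\psi'_e$ has at most linear growth (thanks to the truncation), so $\mcN_e$ is bounded on $L^2(\Omega)\times L^2(\Omega)$ as required in \cref{eq:p_time}. Existence of the primal and dual solutions involved is assumed exactly as in the abstract framework, so \cref{thm:discrete_theorem} and \cref{thm:decomp_theorem} apply verbatim.

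Granting this, the first displayed identity in the corollary is nothing but \cref{eq:qoi_st}, i.e., \cref{thm:error_theorem} with $\hat u=Iu_{\tau h}$, after writing $\mcR_0(u_0^h;z(0))=(u^0-u_0^h,z(0))$ and substituting the concrete $\mcB$ and $\mcN$ into $\mcR_\mathrm{PDE}$ via \cref{eq:R_st_uI}. The second displayed identity is \cref{eq:decomQ} of \cref{thm:decomp_theorem}, in which the data-oscillation term is absent: with $f_{k+1}=\bar f_{k+1}$ one has $\dual{r_f^{k+1}(t),v}=\dual{f(t)-\bar f_{k+1},v}$ by \cref{eq:r_f}, whose integral over $\mcI_{k+1}$ vanishes, so $\operatorname{Osc}=0$ by \cref{eq:qoi_f} --- precisely the remark following \cref{thm:decomp_theorem}.

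It then remains to make $\mcR_\mathrm{s}$ and $\mcR_\mathrm{t}$ explicit. For $\mcR_\mathrm{s}$ I take $v_\tau^h=0$ in \cref{eq:qoi_rs} and expand $\dual{r_h^k,z_k}$ using the definition \cref{eq:R_s} with the concrete $\mcB$, $\mcN_c$, $\mcN_e$ and $f_k=\bar f_k$, then relabel $k\mapsto k+1$ to match the stated sum. For $\mcR_\mathrm{t}$ I start from \cref{eq:qoi_t} with $\operatorname{Osc}$ already removed, use $\partial_t Iu_{\tau h}=(u_{k+1}^h-u_k^h)/\tau_{k+1}$ on $(t_k,t_{k+1}]$, and substitute the concrete $\mcR_\mathrm{PDE}$ from \cref{eq:R_st} and $r_\tau^{k+1}$ from \cref{eq:r_t}. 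Since $\mcR_\mathrm{PDE}$ is linear in its second slot, subtracting $\mcR_\mathrm{PDE}(Iu_{\tau h}(t);z_{k+1})$ produces, among other terms, $+(\nabla Iu_{\tau h}(t),\nabla z_{k+1})+\epsilon^{-2}(\psi'(Iu_{\tau h}(t)),z_{k+1})$, which cancel exactly against the last two terms of $\dual{r_\tau^{k+1}(t),z_{k+1}}$ in \cref{eq:r_t}; the terms that survive reassemble into the stated temporal integrand, together with the initial term $(z(0)-z_0,u^0-u_0^h)$ coming from \cref{eq:qoi_t}. None of the remaining algebra is delicate; the only step demanding genuine care is the preliminary check that the truncated double-well \cref{eq:psi} really yields a semilinear form and a mean-value linearization meeting the smoothness and boundedness hypotheses underlying \cref{thm:error_theorem}-\cref{thm:decomp_theorem}, plus keeping the sign bookkeeping in the $\mcR_\mathrm{t}$ cancellation straight.
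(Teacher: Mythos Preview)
Your proposal is correct and follows essentially the same approach as the paper, which simply states that the result follows from \cref{thm:decomp_theorem} applied to the Allen--Cahn data \cref{eq:AC_wk}--\cref{eq:AC_dualT2}. You have merely spelled out in greater detail the verification of hypotheses and the algebraic cancellations (in particular, the vanishing of $\operatorname{Osc}$ for the time-averaged $\bar f_{k+1}$ and the cancellation between $\mcR_\mathrm{PDE}(\cdot;z_{k+1})$ and $r_\tau^{k+1}$) that the paper leaves implicit.
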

\begin{proof}
The result simply follows from \cref{thm:decomp_theorem} applied to \cref{eq:AC_wk}--\cref{eq:AC_dualT2}. 
\end{proof}
\subsection{Computable error indicator}
Let $u_{\tau h} = \lbrace u_k^{h/2} \rbrace_{k=0}^{N}$ denote the solution of the fully-discrete primal problem \cref{eq:AC_full} using time-step sizes $\{ \tau_k\}_{k=1}^N$ and FE spaces $\{ \mathcal{S}^{h,1}_{k} \}_{k=0}^{N}$, and let $u_{\tau, h/2} = \lbrace u_k^{h/2} \rbrace_{k=0}^{N}$ denote the solution of \cref{eq:AC_full} using time-step sizes $\{ \tau_k\}_{k=1}^N$ and enriched FE spaces $\{ \mathcal{S}^{h/2,1}_{l} \}_{l=0,1/2,\ldots, N}$. Replacing $u_{\tau}$ with the computable $u_{\tau, h/2}$ in \cref{eq:AC_dualT0}-\cref{eq:AC_dualT2}, we obtain the full discretization of the dual problem using enriched FE spaces: find $z_{k}^{h/2} \in \mcS^{h/2,1}_k$, $k=0,1,\ldots, N$, such that 
\begin{equation}\label{eq:AC_dualF0}
-\left( \frac{z_{1}^{h/2}  - z_{0}^{h/2} }{\tau_1}, w^{h/2} \right) - \frac{1}{\epsilon^2} \left(\psi'^s_e(u_{0}^{h/2} , u_{0}^h)z_{1}^{h/2} , w^{h/2} \right)= ( q_{0}, w^{h/2}  ) \quad \forall  w^{h/2} \in \mathcal{S}^{h/2,1}_0
\end{equation}
and for $k = 1,2,\ldots, N-1$:
\begin{multline}\label{eq:AC_dualF1}
-\Bigg( \frac{z_{k+1}^{h/2} - z_k^{h/2}}{\tau_k} , w^{h/2}\Bigg) + (\nabla z_k^{h/2}, \nabla w^{h/2}) + \frac{1}{\epsilon^2} \left(\psi'^s_c(u_{k}^{h/2}, u_{k}^h)z_k^{h/2}, w^{h/2}\right) \\
- \frac{\tau_{k+1}}{\tau_k} \frac{1}{\epsilon^2} \left(\psi'^s_e(u_{k}^{h/2}, u_{k}^h)z_{k+1}, w^{h/2}\right)= ( q_{k}, w^{h/2} ) \qquad \forall  w^{h/2} \in \mathcal{S}^{h/2,1}_k
\end{multline} 
where the terminal condition is 
\begin{multline}\label{eq:AC_dualF2}
\left( z_{N}^{h/2} , w^{h/2} \right) + \tau_N(\nabla z_{N-1}^{h/2}, \nabla w^{h/2}) + \frac{\tau_N}{\epsilon^2} \left(\psi'^s_c(u_{N}^{h/2}, u_{N}^h)z_{N}, w^{h/2}\right) \\
= \tau_N( q_{N}, w^{h/2} ) +(\bar{q}, w^{h/2}) \qquad \forall  w^{h/2} \in \mathcal{S}^{h/2,1}_N
\end{multline} 
We denote by $z_{\tau,h/2}=\lbrace z_k^{h/2} \rbrace_{k=0}^{N}$ the solution of \cref{eq:AC_dualF0}-\cref{eq:AC_dualF2}. To get $z_{\tau/2,h/2}$, we first compute the space-time enriched approximation $u_{\tau/2,h/2}=\lbrace \tilde{u}_l^{h/2}\rbrace_{l=0,1/2,\ldots,N}$ of the primal problem using half time-step sizes $\{ \tau_1/2,  \tau_1/2,  \tau_2/2,  \tau_2/2, \ldots, \tau_N/2, \tau_N/2\}$ and enriched FE spaces $\{ \mathcal{S}^{h/2,1}_{l} \}_{l=0,1/2,\ldots, N}$. Then, replacing $u_{\tau}$ with $u_{\tau/2,h/2}$ in \cref{eq:AC_dualT0}-\cref{eq:AC_dualT2}, we compute the space-time enriched approximation $z_{\tau/2,h/2}=\lbrace \tilde{z}_l^{h/2}\rbrace_{l=0,1/2,\ldots,N}$ using the same time-step sizes and FE spaces as $u_{\tau/2,h/2}$. \par
In the numerical examples for the Allen--Cahn equation in \Cref{sec:num_AC} we compute $u_{\tau, h/2}$ and $z_{\tau, h/2}$ directly by taking $u_{\tau, h/2} = u_{\tau/2, h/2}$ and $z_{\tau, h/2} = z_{\tau/2, h/2}$ at concurrent time steps as in \cref{reduce_compute}, and consider a piecewise-constant time-reconstruction $\hat{z}$ of $z_{\tau/2, h/2}$ for each time interval $[t_k,t_{k+1})$, $k=0,1,\ldots,N-1$, i.e.
$$
\hat{z}(t) = \tilde{z}_k^{h/2} \quad \text{for } t \in [t_k,t_{k+1/2}), \qquad \hat{z}(t) = \tilde{z}_{k+1/2}^{h/2} \quad \text{for } t \in [t_{k+1/2},t_{k+1}).
$$
According to \cref{eq:e_st} and \cref{eq:est_ht}-\cref{eq:est_step}, we then get the following global space-time error estimate:
\begin{multline}\label{eq:ac_est_st}
\mcE_\mathrm{st} = \left(\hat{z}(0), u_0^{h/2} - u_0^h\right) + \sum_{k=0}^{N-1}  \int_{t_k}^{t_{k+1}} \left\lbrace \left\langle f(t),\hat{z}(t)\right\rangle - \left( \frac{u^h_{k+1} - u^h_{k}}{\tau_{k+1}}, \hat{z}(t) \right) \right.\\
\left.  - \big(\nabla I u_{\tau h}(t), \nabla \hat{z}(t)\big)  - \frac{1}{\epsilon^2} \big(\psi'(I u_{\tau h}(t)), \hat{z}(t)\big) \right\rbrace \, \dd t 
\end{multline}
and the local error indicators
\begin{align*}
\mcE^{k+1}_{h \tau} \!=&\Bigg\vert \int_{t_k}^{t_{k+1}}\! \Bigg\lbrace \left\langle f(t),\hat{z}(t)\right\rangle
- \left( \frac{u^h_{k+1} - u^h_{k}}{\tau_{k+1}}, \hat{z}(t) \right) - (\nabla I u_{\tau h}(t), \nabla \hat{z}(t)) \\
&- \frac{1}{\epsilon^2} \big(\psi'(I u_{\tau h}(t)), \hat{z}(t)\big) \Bigg\rbrace \, \dd t  \Bigg\vert \\
\mcE_{\tau}^{k+1} \!=&\Bigg\vert \int_{t_k}^{t_{k+1}} \! \Bigg\lbrace 
\left\langle f(t),\hat{z}(t) -\tilde{z}^{h/2}_{k+1}\right\rangle 
\\
&- \left( \frac{u^h_{k+1} - u^h_{k}}{\tau_{k+1}}, \hat{z}(t)-\tilde{z}_{k+1}^{h/2} \right) - \big(\nabla I u_{\tau h}(t), \nabla \hat{z}(t)\big) + (\nabla u_{k+1}^h, \nabla \tilde{z}_{k+1}^{h/2}) \\ 
&- \frac{1}{\epsilon^2} \big(\psi'(I u_{\tau h}(t)), \hat{z}(t)\big) + \frac{1}{\epsilon^2} \left(\psi'_c(u_{k+1}^h), \tilde{z}_{k+1}^{h/2}\right) - \frac{1}{\epsilon^2} \left(\psi'_e(u_{k}^h), \tilde{z}_{k+1}^{h/2}\right) \Bigg\rbrace \, \dd t\Bigg\vert 
\\
\mcE_h^{k+1}  \!=& \Bigg\vert \tau_{k+1} \Bigg\lbrace 
\left(\bar{f}_{k+1},\tilde{z}^{h/2}_{k+1}\right)
-  \left( \frac{u^h_{k+1} - u^h_{k}}{\tau_{k+1}} , \tilde{z}_{k+1}^{h/2} \right) -  ( \nabla \tilde{z}_{k+1}^{h/2}, \nabla u_{k+1}^h)  
\\ &-\frac{1}{\epsilon^2} \big(\psi'_c(u_{k+1}^h), \tilde{z}_{k+1}^{h/2} \big) 
 + \frac{1}{\epsilon^2} \big(\psi'_e(u_{k}^h), \tilde{z}_{k+1}^{h/2}\big) \Bigg\rbrace\Bigg\vert 
\end{align*}
for $k=0,1,\ldots,N-1$ and $i=0,1,\ldots, M_k$. And since $z_0^{h/2} =\hat{z}(0) = \tilde{z}_0^{h/2}$,
\begin{align*}
&\mcE_{h\tau}^0 = \mcE_h^0 = \left( u_0^{h/2} - u_0^{h}, \tilde{z}_0^{h/2}\right)\\
&\mcE_{\tau}^0 = 0.
\end{align*}
\begin{remark}
For the linear heat equation, $\partial_t u - \Delta u = f$, note that the dual problem and the computable error indicators are easily obtained from the above results by neglecting the nonlinear terms $\psi'_e$ and $\psi'_c$.   
\end{remark}
\subsection{Numerical results} \label{sec:num_AC}
In the following numerical experiments, we investigate the efficiency of the duality-based error estimates and the performance of the proposed adaptive algorithm. The results will be demonstrated in three parts. In the first part, we illustrate the consistency of the dual time scheme \cref{eq:AC_dualT0}--\cref{eq:AC_dualT2} since we introduced a non-standard IMEX time-discrete dual problem \cref{eq:dual_time0}--\cref{eq:dual_time2} which contains a nonstandard coefficient $\tau_{k+1}/\tau_k$. The second part is on the convergence of error estimate $\mcE_\mathrm{st}$ under uniform refinements for the nonlinear Allen--Cahn equation. In the third part, we apply the proposed duality-based adaptive algorithm to the linear heat equation and the nonlinear Allen--Cahn equation. We compare our adaptive result for the heat equation with the sequential-in-time adaptive algorithm (specifically for the heat equation) of Verf{\"u}rth stated in \cite[Sec. 6.8]{verfurth2013posteriori}. For the Allen--Cahn equation, we compare our adaptive results with uniform space-time refinements. 
\subsubsection*{Consistency test}
We begin with verifying numerically that the special IMEX time scheme of the dual problem \cref{eq:dual_time0}-\cref{eq:dual_time2} is first-order accurate in time with respect to refinements of uniform initial time steps (i.e. $\tau_{k+1}/\tau_k=1$) and, in particular, nonuniform initial time steps (i.e. $\tau_{k+1}/\tau_k \neq 1$). Here, the Allen--Cahn equation is considered in 1D on the domain $\Omega = (-3,3)$ with parameter $\epsilon = 1$. The spatial mesh is composed of $256$ elements along the axis. We consider a manufactured solution which oscillates in time:
$$
z(\mathbf{x},t) = e^{-10 x^2 + \sin{t}}
$$
The convergence results are presented on a double logarithmic scale in \Cref{fig:ac_consist}. \Cref{fig:ac_consist}a is the convergence of the error in $L^2$ norm at time $T$ based on time-step refinements using uniform initial time steps $\{\tau_k\}_{k=1}^5=\{0.1, 0.1, 0.1, 0.1, 0.1\}$, and \Cref{fig:ac_consist}b uses nonuniform initial time steps $\{\tau_k\}_{k=1}^5 =\{ 2\cdot 10^{-2} (3k +1)\}_{k=4}^0 =\{0.26, 0.2, 0.14, 0.08, 0.02\}$. For both uniform and nonuniform initial time steps, the observed rates are close to 1, which demonstrates that \cref{eq:dual_time0}-\cref{eq:dual_time2} is a first-order time-accurate scheme.
\begin{figure}[t]
\centering
\subfloat[Uniform]{\includegraphics[scale=0.33]{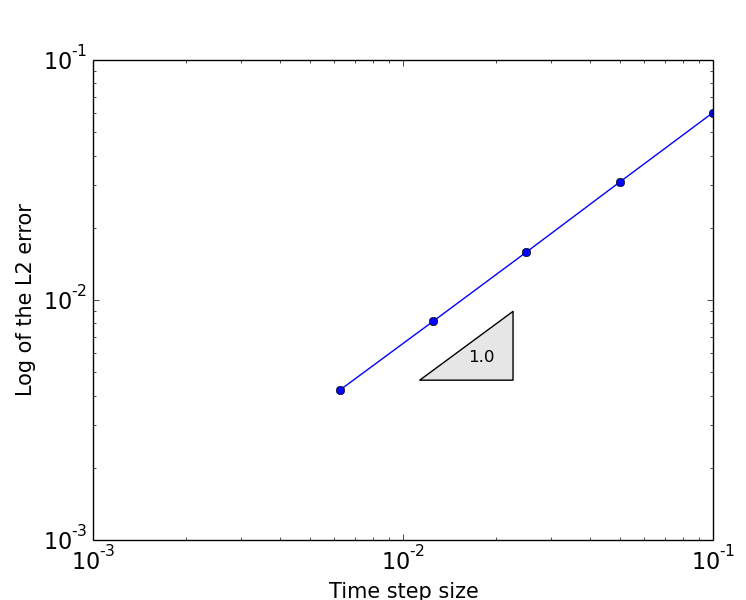} }
\subfloat[Nonuniform]{\includegraphics[scale=0.33]{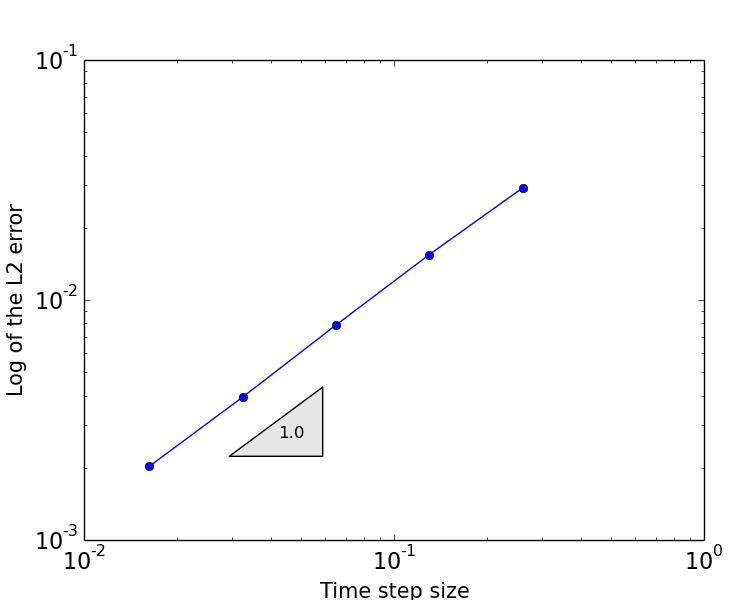} }
\caption{{\small Accuracy test for dual IMEX scheme: (a) log of the $L^2$ norm of the error at time $T$ versus time step size (uniform initial time steps); (b) log of the $L^2$ norm of the error at time $T$ versus maximum time step size (nonuniform initial time steps)}}\label{fig:ac_consist}
\end{figure}
\subsubsection*{Effectivity test}
In this numerical experiment, we consider the Allen--Cahn equation with an exact solution
$$
u(\mathbf{x},t) = \sin(\pi x ) \, \sin(\pi  y) \, e^{-t}
$$
on the domain $\Omega = (0,1)\times (0,1)$ where $\epsilon = 1$. We suppose that we are interested in the error at final time~$T$, i.e., in~\eqref{eq:qoi} we take $q=0$ and $\bar{q} = u(T) - I u_{\tau h}(T)$ (which is approximated in the computations by $\bar{q} \approx \tilde{u}_N^{h/2} - u_N^h$, see Section~\ref{sec:computable}).
The error of interest is thus $\mcQ(u) - \mcQ(I u_{\tau h})=\Vert u(T) - I u_{\tau h}(T) \Vert_{L^2(\Omega)}^2 = \Vert u(T) - u_N^h \Vert_{L^2(\Omega)}^2$. \par
To investigate the error estimate with respect to the spatial discretization, temporal discretization and space-time discretization, we compute $\mcE_\mathrm{st}$ according to \cref{eq:ac_est_st} under uniform spatial refinement, uniform temporal refinement and uniform space-time refinement, respectively. In view of the nonstandard time discretization scheme for the dual problem, we also investigate the accuracy of the error estimate $\mcE_\mathrm{st}$ with respect to uniform and nonuniform initial time step sizes. \Cref{tab:ac_eff_table_s} presents the convergence of $\mcE_\mathrm{st}$ under uniform spatial refinement for a sufficiently small time step size ($\tau_k = 1e-4$). For uniform initial time steps $\{\tau_k\}_{k=1}^{4} = \{ 0.05, 0.05, 0.05, 0.05\}$, the left side of \Cref{tab:ac_eff_table_t_u} shows the convergence of $\mcE_\mathrm{st}$ under uniform temporal refinement for a sufficiently fine spatial mesh ($128\times128$ elements) and the left side of \Cref{tab:ac_eff_table_n} shows the convergence under uniform space-time refinements. For nonuniform initial time steps $\{\tau_k\}_{k=1}^{4} = \{ 0.08, 0.06, 0.04, 0.02\}$, the convergence result for uniform temporal refinement with a sufficiently fine spatial mesh is presented on the right side of \Cref{tab:ac_eff_table_t_u}, and the convergence result for uniform space-time refinements is presented on the right side of \Cref{tab:ac_eff_table_n}. The effectivity results for different refinements are also presented in the two tables. \par 
\begin{table}[tbhp]
\centering
\begin{small}
\begin{tabular}[b]{l|l|l|l}
      $M$   & $\mcQ(u)-\mcQ(I u_{\tau h})$ & $\mcE_\mathrm{st}$&  Effectivity\\   
      \hline 
      16  &  0.0003046  &0.0002871&  0.943 \\
      64       & 1.711e-05 &1.606e-05&0.939 \\
      256      &1.041e-06 &9.756e-07&0.938 \\
     1024   &  6.458e-08  &6.054e-08&0.937 \\
     4096 &   4.028e-09   &3.776e-09& 0.937 \\
     \hline
\end{tabular}
\end{small}
\caption{{\small Effectivity of error estimate \cref{eq:ac_est_st} under spatial refinement}}\label{tab:ac_eff_table_s}
\end{table}

\begin{table}[tbhp]
\centering
\begin{small}
\begin{tabular}[b]{l|l|l|l|l|l|l}
      \multicolumn{1}{c}{ } &  \multicolumn{3}{c|}{Uniform initial time steps} & \multicolumn{3}{c}{Nonuniform initial time steps}\\
      \hline
      $N$  &  $\mcQ(u)-\mcQ(I u_{\tau h})$ & $\mcE_\mathrm{st}$&   Eff & $\mcQ(u) - \mcQ(I u_{\tau h})$ & $\mcE_\mathrm{st}$ &  Eff \\   
      \hline 
       4  & 0.0001378  &5.893e-05&  0.428 & 0.0001058  &4.773e-05& 0.451\\
      8  & 3.584e-05 &1.659e-05&  0.463 &2.612e-05 &1.253e-05&  0.480\\   
     16    &8.918e-06 &4.272e-06& 0.479  & 6.247e-06&3.054e-06& 0.489\\   
     32    & 2.130e-06&1.032e-06&0.484  & 1.444e-06 &7.063e-07&0.489\\
        64    &4.778e-07 &2.303e-07&0.482 & 3.122e-07 &1.507e-07&0.483\\
     \hline
\end{tabular}
\end{small}
\caption{{\small Effectivity of error estimate \cref{eq:ac_est_st} under temporal refinement with uniform initial time steps and nonuniform initial time steps}}\label{tab:ac_eff_table_t_u}
\end{table}


\begin{table}[tbhp]
\centering
\begin{small}
\begin{tabular}[b]{l|l|l|l|l|l|l}
      \multicolumn{1}{c}{ } &  \multicolumn{3}{c|}{Uniform initial time steps} & \multicolumn{3}{c}{Nonuniform initial time steps}\\
      \hline
      $M \times N $    & $\mcQ(u) - \mcQ(I u_{\tau h})$ & $\mcE_\mathrm{st}$ &  Eff& $\mcQ(u) - \mcQ(I u_{\tau h})$ & $\mcE_\mathrm{st}$ &  Eff\\    
      \hline 
      256   & 6.061e-05  & 2.583e-05&  0.426 & 4.184e-05  & 2.013e-05  &  0.481\\   
      2048   & 2.471e-05 &1.086e-05&  0.440  & 1.651e-05 &7.581e-06&  0.459\\   
     16384   &7.926e-06&  3.717e-06&  0.469 &5.295e-06 &2.534e-06&   0.479\\   
     131072   &2.233e-06&  1.082e-06&  0.485 &1.491e-06&  7.299e-07&  0.489 \\
      \hline 
\end{tabular}
\end{small}
\caption{{\small Effectivity of error estimate \cref{eq:ac_est_st} under space-time refinement with uniform initial time steps and nonuniform initial time steps}}\label{tab:ac_eff_table_n}
\end{table}

From all the results of \Cref{tab:ac_eff_table_s}--\Cref{tab:ac_eff_table_n}, we observe that the error and the estimate converge with the same order under various refinements. The effectivity indices are always between $0.4$ and $1$, and seem to converge to a constant. This indicates the asymptotic effectivity of the error estimate. 
\subsubsection*{Adaptivity test for the linear heat equation: Clockwise moving source}
Let $\Omega=(0,1) \times (0,2) $. We take the example from Asner, Tavener and Kay~\cite{asner2012adjoint} by choosing the right hand side and the initial and the boundary condition so that the exact solution is given by
$$
u(\mathbf{x}, t) = \exp(-100(x-0.5-0.25\sin(\pi t))^2 - 100(y-1-0.5\cos(\pi t))^2)
$$
which is an exponential peak moving clockwise inside the domain, see \Cref{fig:clockwise}. We set up the final time $T=0.5$ and $ \theta = \lambda = 0.8$, and start the adaptive procedure with a uniform mesh containing $4 \times 8$ elements and $10$ equally distributed time steps (i.e. $\tau = 0.05$). We aim to minimize the error at final time $T$, i.e., as before, $\mcQ(u) - \mcQ(I u_{\tau h}) = \Vert u(T) - I u_{\tau h}(T) \Vert_{L^2(\Omega)}^2 = \Vert u(T) - u_N^h \Vert_{L^2(\Omega)}^2$. For the sequential adaptive algorithm, we choose an initial coarse mesh with $4 \times 8$ elements and an initial $\tau = 0.01$. 
\par
\begin{figure}[thp]
\centering
\captionsetup[subfigure]{labelformat=empty}
\begin{tabular}{ll} 
\rotatebox{90}{{\scriptsize Exact} }&\hspace{-16pt}
  \subfloat{\includegraphics[scale=0.241]{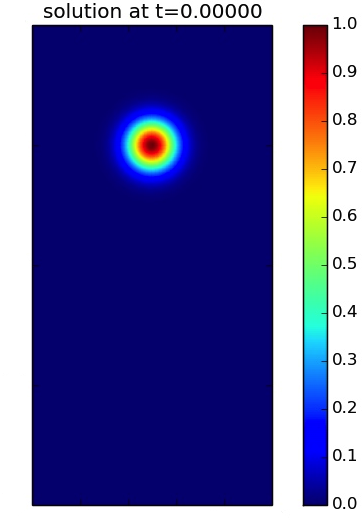}}
  \subfloat{\includegraphics[scale=0.241]{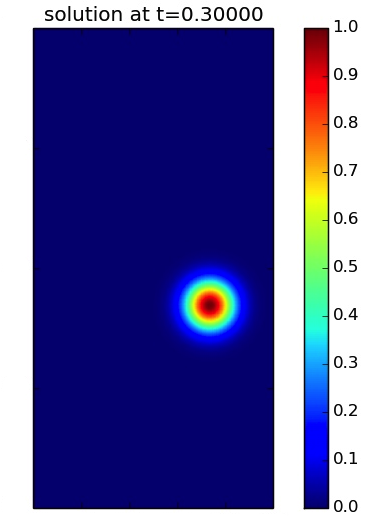}}
  \subfloat{\includegraphics[scale=0.241]{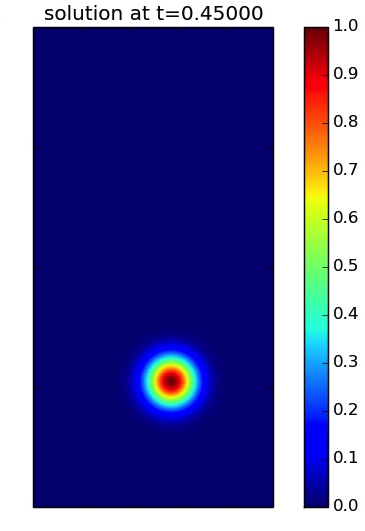}}
  \subfloat{\includegraphics[scale=0.241]{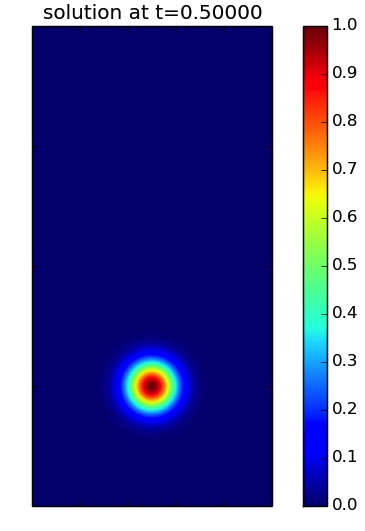}}\\[-10pt]
  \rotatebox{90}{{\scriptsize Sequential}}&\hspace{-16pt}
  \subfloat{\includegraphics[scale=0.241]{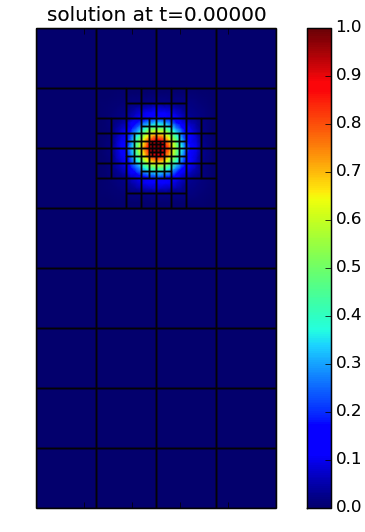}}
  \subfloat{\includegraphics[scale=0.241]{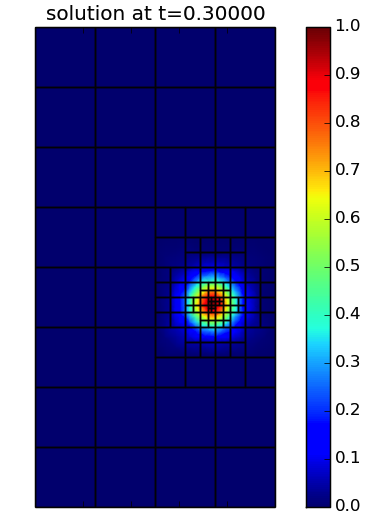}}
  \subfloat{\includegraphics[scale=0.241]{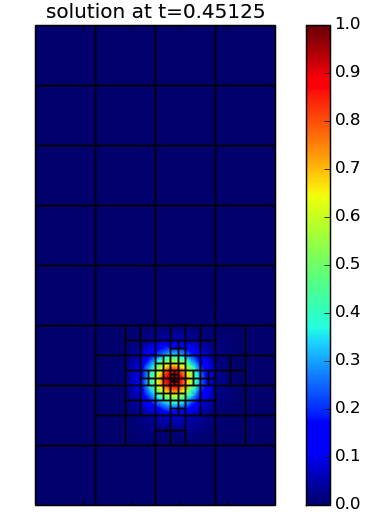}}
  \subfloat{\includegraphics[scale=0.241]{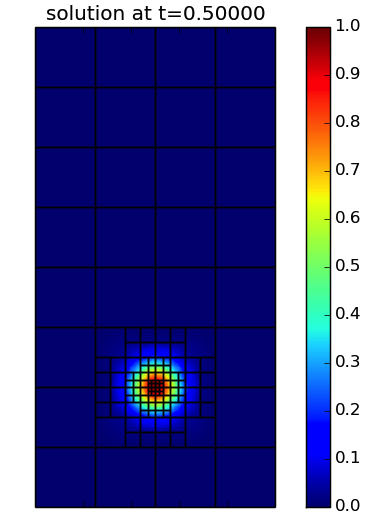}}\\[-10pt]
  \rotatebox{90}{{\scriptsize Duality-based}}&\hspace{-16pt}
  \subfloat[$t=0.0$]{\includegraphics[scale=0.241]{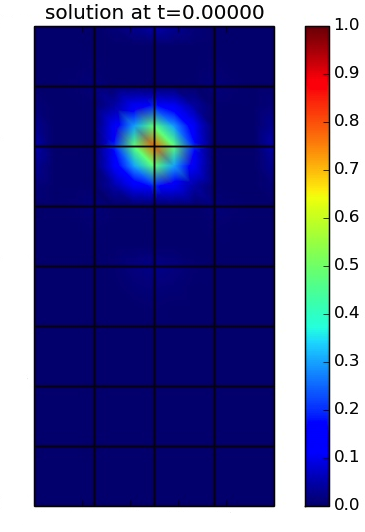}}
  \subfloat[$t=0.3$]{\includegraphics[scale=0.241]{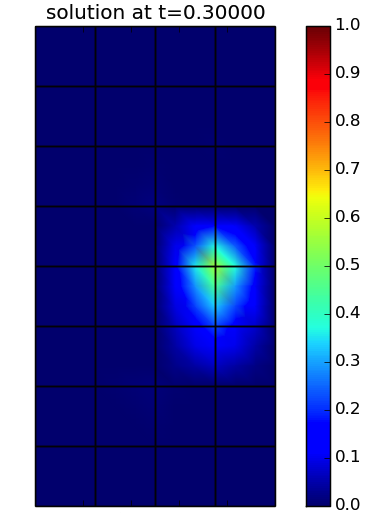}}
  \subfloat[$t=0.45$]{\includegraphics[scale=0.241]{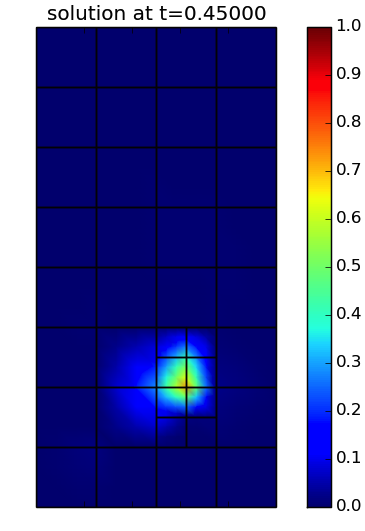}}
  \subfloat[$t=0.5$]{\includegraphics[scale=0.241]{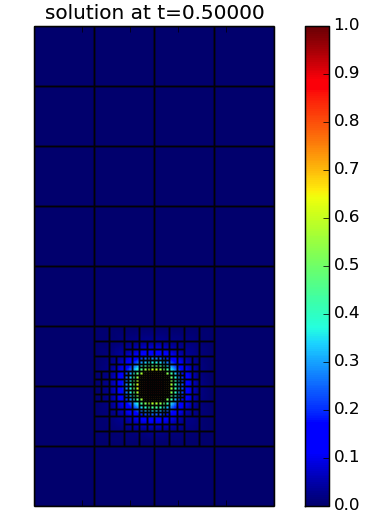}}\\[-8pt]
\end{tabular}
\caption{{\small Snapshot of the exact solution (first row), the sequential adaptive solution with the computational mesh (second row) and the duality-based adaptive solution with the computational mesh (third row) at several time points} }\label{fig:clockwise}
\end{figure}
In \Cref{fig:clockwise}, we show a comparison of the results obtained by the sequential adaptive algorithm and our duality-based adaptive algorithm. The first row corresponds to the exact solution. The second and third row are the snapshots of adaptively refined meshes with corresponding approximations by using the two adaptive algorithms. \Cref{fig:heat_steps} illustrates the various time steps over time for the two adaptive algorithms. A comparison of the convergence of the error is shown in \Cref{fig:heat_error} where `Total dof' refers to total number of degrees of freedom, $\sum_{k=0}^N M_k$. 
\begin{figure}[tbhp]
\begin{minipage}{.49\textwidth}
\centering
\includegraphics[scale=0.33]{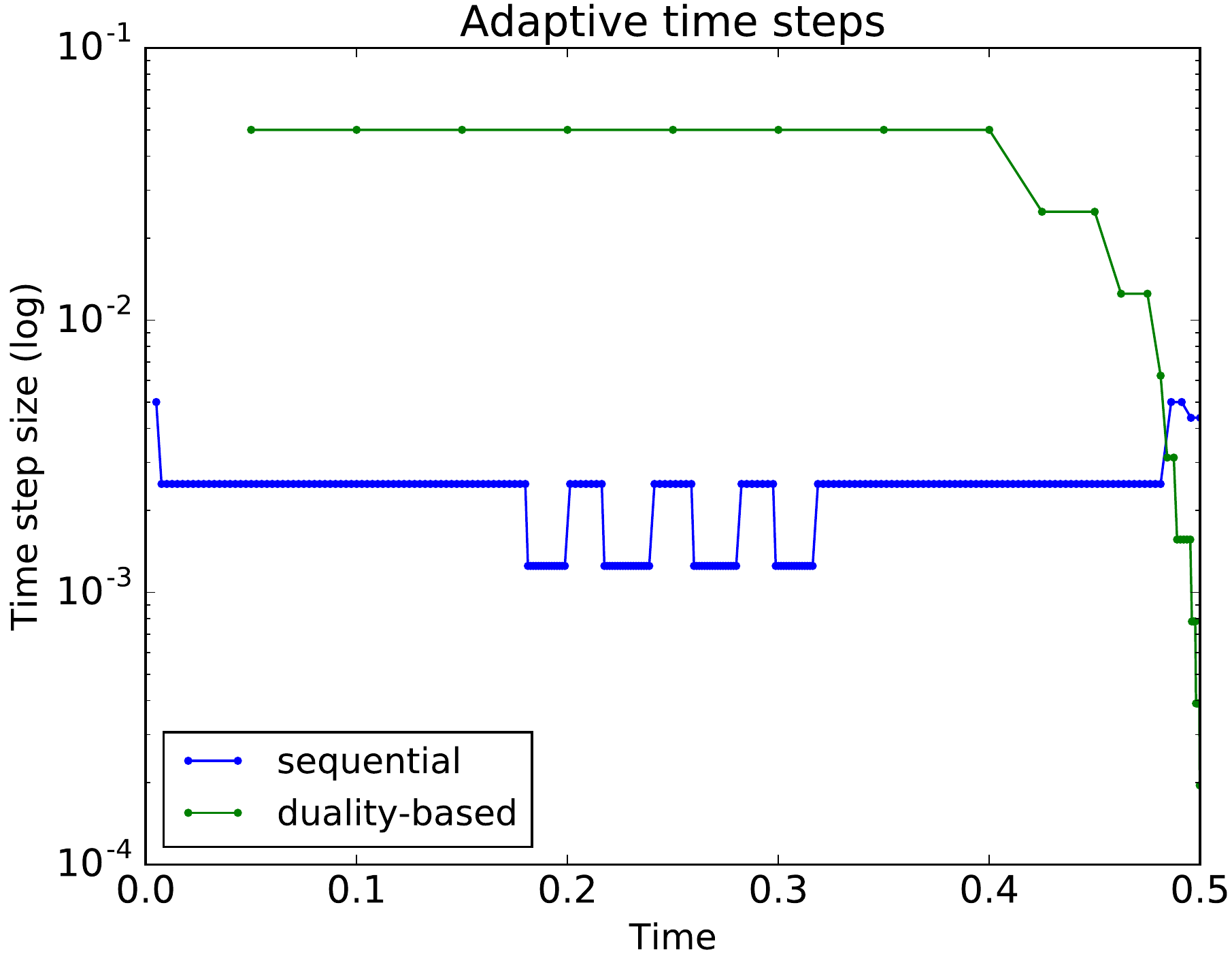} 
\caption{{\small Adaptive time step refinement for the sequential adaptive algorithm and the non-sequential duality-based adaptive algorithm}}\label{fig:heat_steps}
\end{minipage}
\hspace{2pt}
\begin{minipage}{.48\textwidth}
\centering
\includegraphics[scale=0.33]{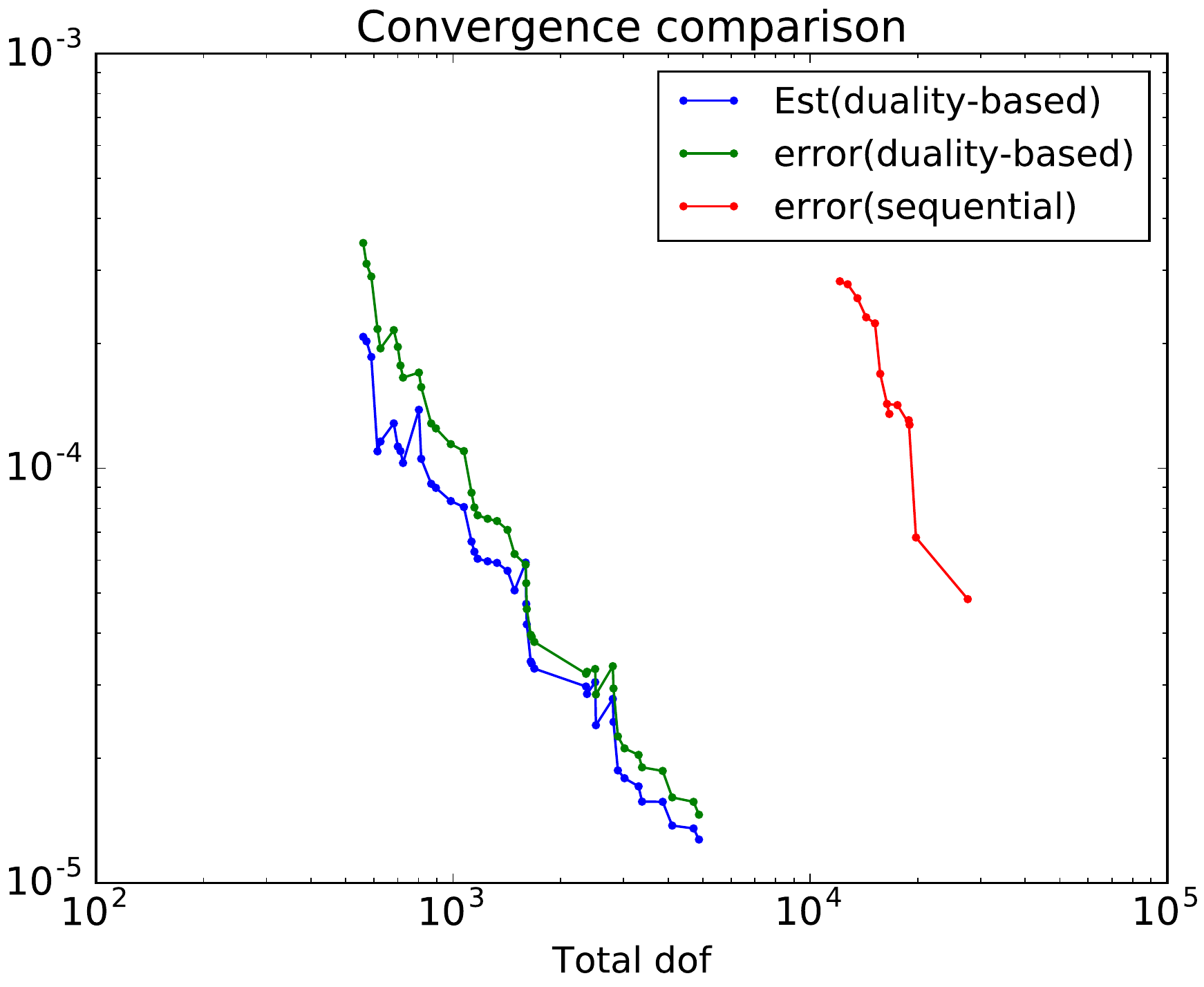}
\caption{{\small Convergence comparison between the sequential adaptive algorithm and the non-sequential duality-based adaptive algorithm}}\label{fig:heat_error}
\end{minipage}
\end{figure}
\par
From these results, it can be seen that the spatial and temporal refinements of the sequential adaptive algorithm focus on tracing the movement of the Gaussian shaped peak of the solution in the space-time domain. In contrast, since our duality-based adaptive algorithm targets the $L^2$ norm of the final error, the refinements of our algorithm are only concentrated at the final moment of the space-time domain. Away from this final moment, where residuals contribute much less to the final error, the original mesh and time step already provide sufficient resolution and need not be refined. This leads to fewer degrees of freedom and time steps for reaching the same accuracy of the final solution than for the sequential adaptive algorithm.
\par
In \Cref{fig:heat_error}, we also show the convergence of the error and the error estimate $\mcE_\mathrm{st}$ for the duality-based adaptive algorithm, which demonstrates the accuracy of the error estimate under adaptive refinement.  

\subsubsection*{Adaptivity test for the Allen--Cahn equation: Shrinking ring}~\\
The Allen--Cahn dynamics of this test case is a shrinking ring (with diffuse interfaces) in the middle of the domain $\Omega = (-1,1)^2$. The initial condition is set as 
\begin{equation}\label{ac_initial}
u(0) = -\tanh \left( \frac{\sqrt{x^2+y^2}-0.6}{\sqrt{2}\epsilon} \right) + \tanh \left( \frac{\sqrt{x^2+y^2}-0.15}{\sqrt{2}\epsilon} \right) -1
\end{equation}
where $\epsilon = 0.0625$, see \Cref{fig:AC_adp}. The inner circle has a small radius of $0.15$ which is expected to vanish much earlier than the outer circle. We are interested in the final error (i.e. $\mcQ(u)-\mcQ(I u_{\tau h})=\Vert u - I u_{\tau h} \Vert_{L^2}^2(T)$) when the inner circle has disappeared, i.e., the final time $T = 0.02$. To have a reference value for the error, we compute an approximation to \cref{eq:AC_full} on a uniform mesh with $512^2$ elements and a uniform time step size $\tau = 1e-5$. For the adaptive algorithm, we take a coarse initial time-step size $\tau_k = 5e-3$ and a coarse initial mesh with $16^2$ elements. The fractions in the adaptive algorithm are selected as $ \theta = \lambda = 0.8$. 
\begin{figure}[tbhp]
\centering
\captionsetup[subfigure]{labelformat=empty}
\begin{tabular}{ll}
  &\hspace{-14pt}
  \subfloat{\Plott{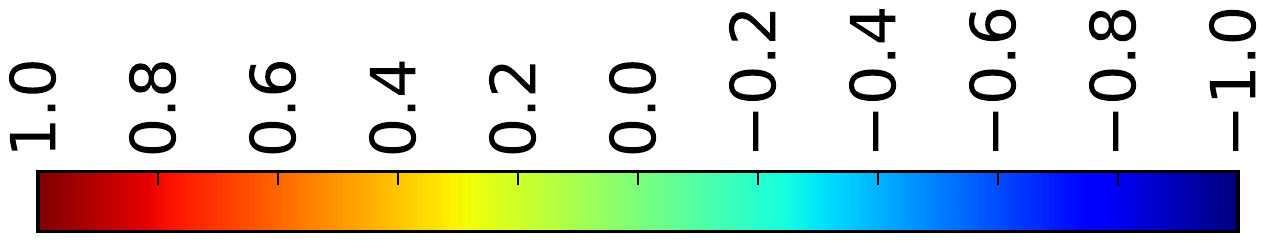}}\hspace{1pt}\!
    \subfloat{\Plott{color_bar.pdf}}\,\!
  \subfloat{\Plott{color_bar.pdf}}\,\!
  \subfloat{\Plott{color_bar.pdf}}\\[-12pt]
\rotatebox{90}{{\scriptsize Reference}}&\hspace{-12pt}
  \subfloat{\clippedFig{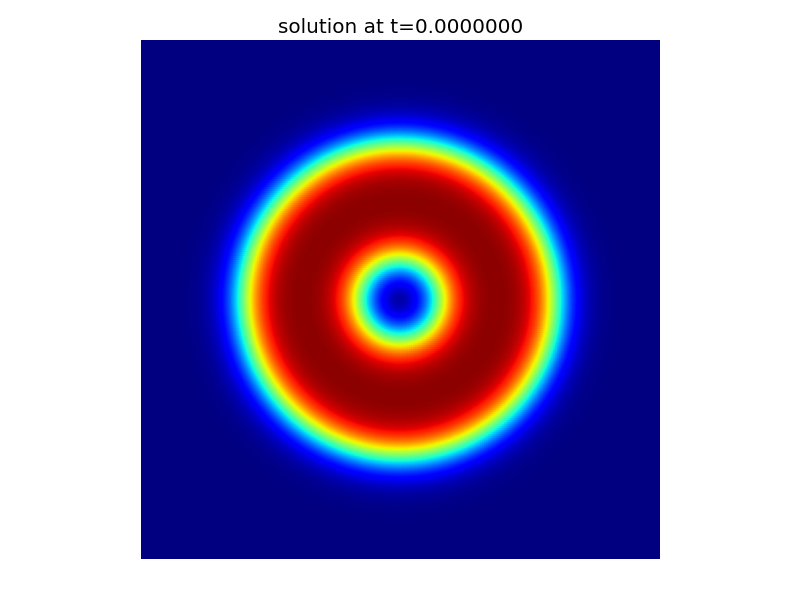}}\hspace{1pt}
    \subfloat{\clippedFig{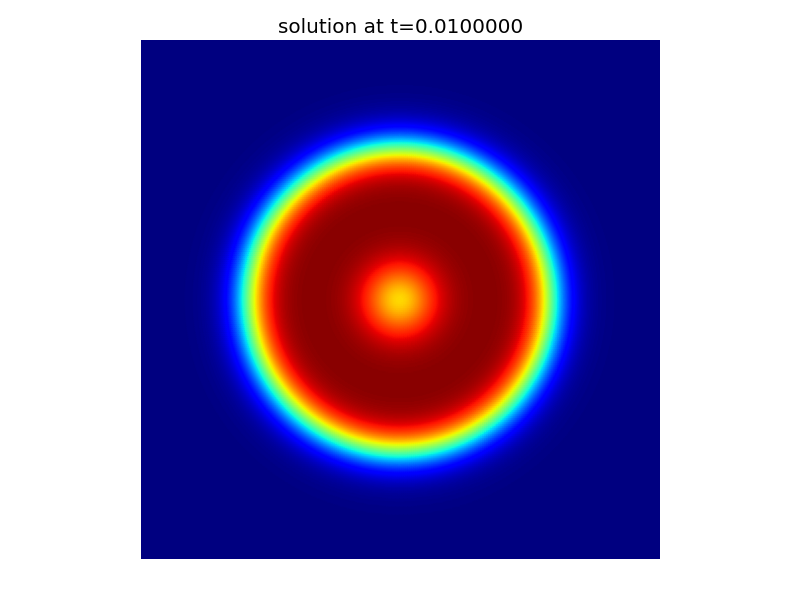}}\hspace{1pt}
  \subfloat{\clippedFig{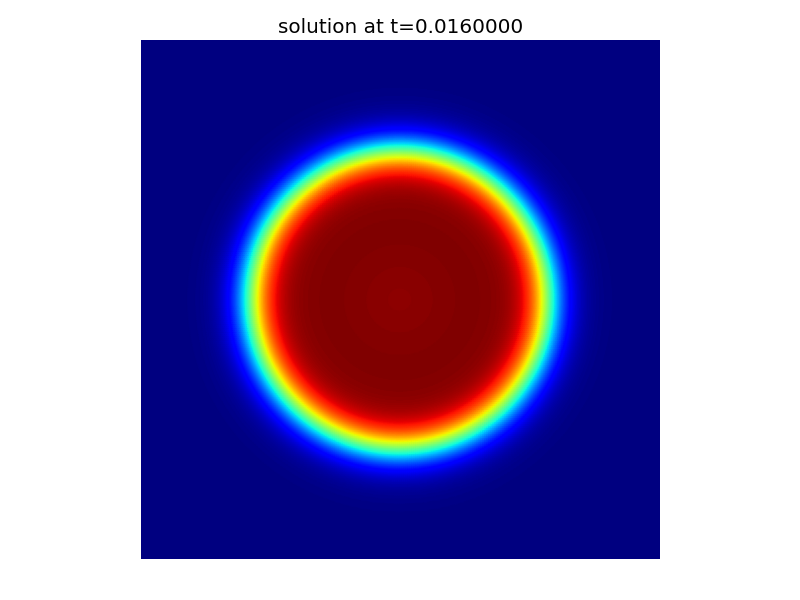}}\hspace{1pt}
  \subfloat{\clippedFig{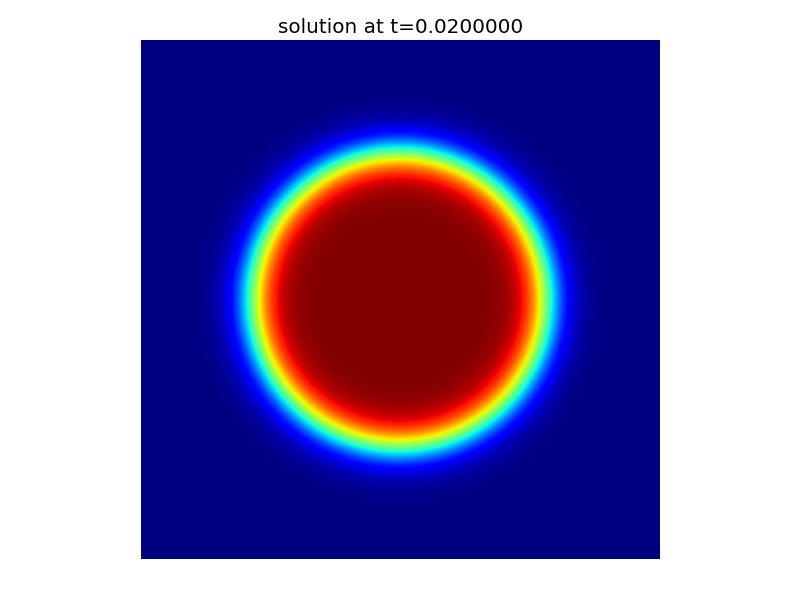}}\\[-8pt]
    &\hspace{-14pt}
  \subfloat{\Plott{color_bar.pdf}}\hspace{1pt}\!
    \subfloat{\Plott{color_bar.pdf}}\,\!
  \subfloat{\Plott{color_bar.pdf}}\,\!
  \subfloat{\Plott{color_bar.pdf}}\\[-12pt]
  \rotatebox{90}{{\scriptsize Primal}}&\hspace{-12pt}
  \subfloat{\clippedFig{ac_p1}}\hspace{1pt}
    \subfloat{\clippedFig{ac_p22}}\hspace{1pt}
  \subfloat{\clippedFig{ac_p3}}\hspace{1pt}
  \subfloat{\clippedFig{ac_p4}}\\[-8pt]
    &\hspace{-12pt}
  \subfloat{\Plott{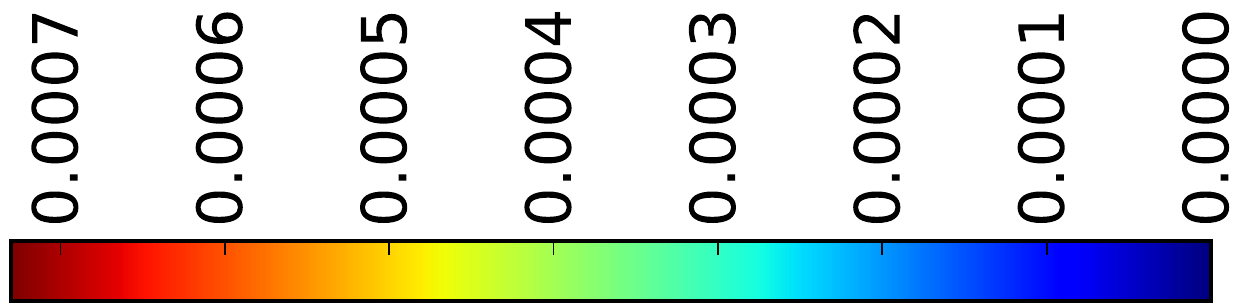}}\hspace{1pt}
    \subfloat{\Plott{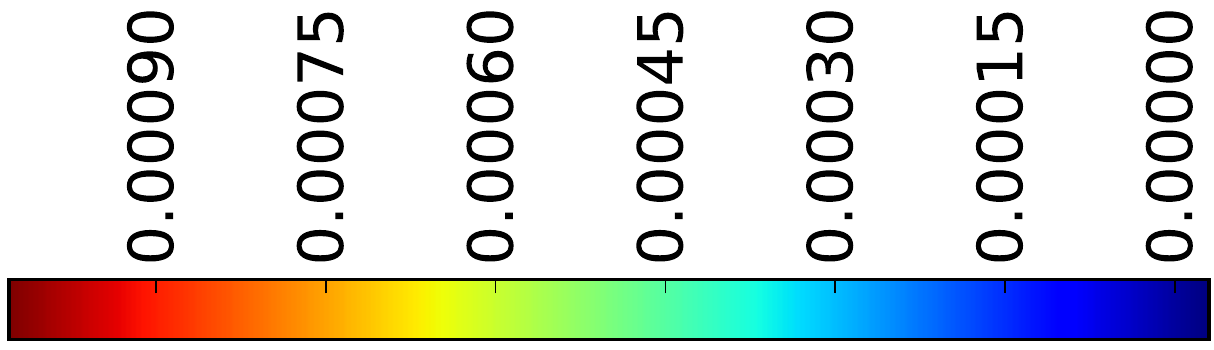}}\hspace{3pt}
  \subfloat{\Plott{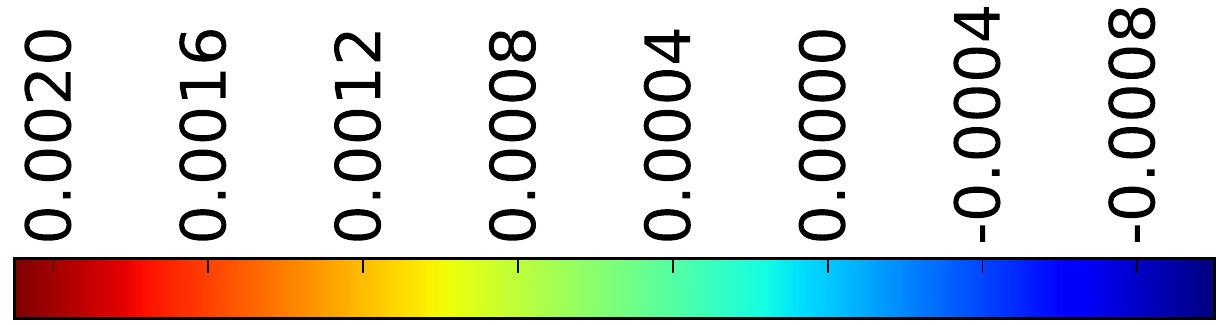}}\hspace{2pt}
  \subfloat{\Plott{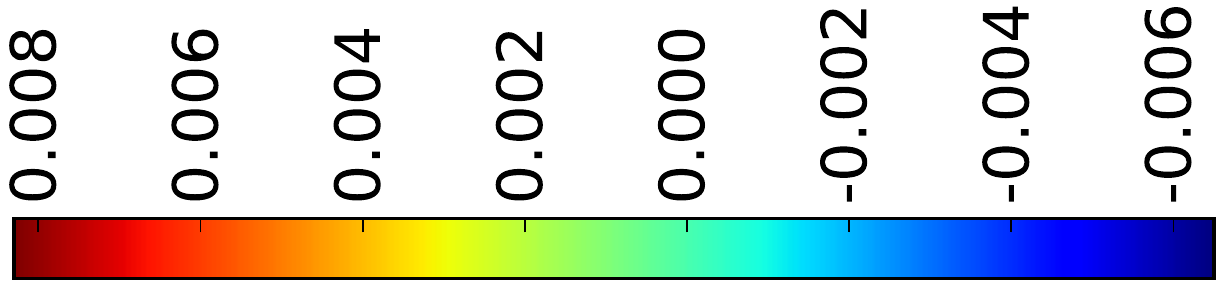}}\\[-12pt]
   \rotatebox{90}{{\scriptsize Dual}}&\hspace{-12pt}
  \subfloat{\clippedFig{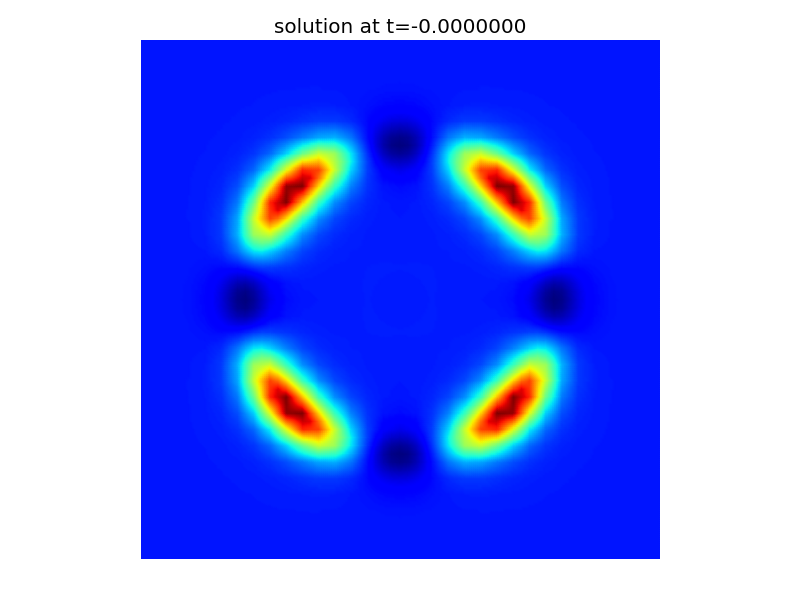}}\hspace{1pt}
    \subfloat{\clippedFig{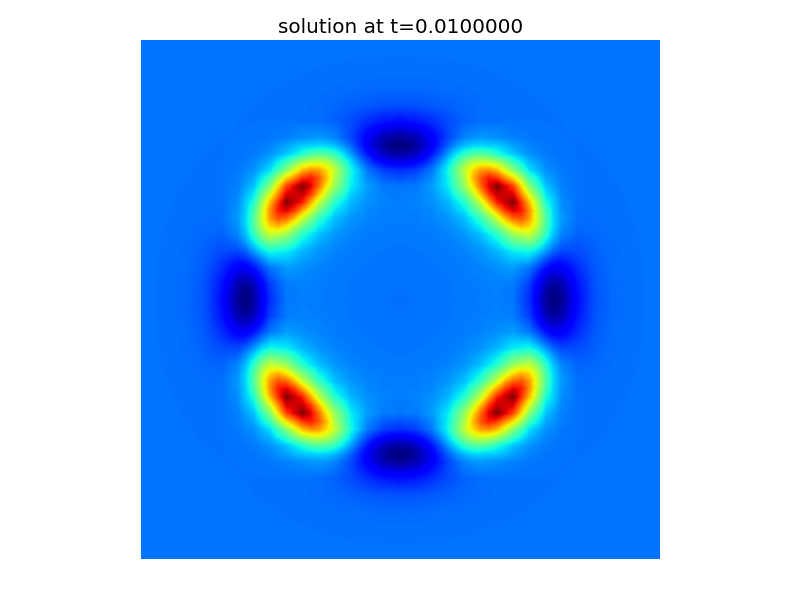}}\hspace{1pt}
  \subfloat{\clippedFig{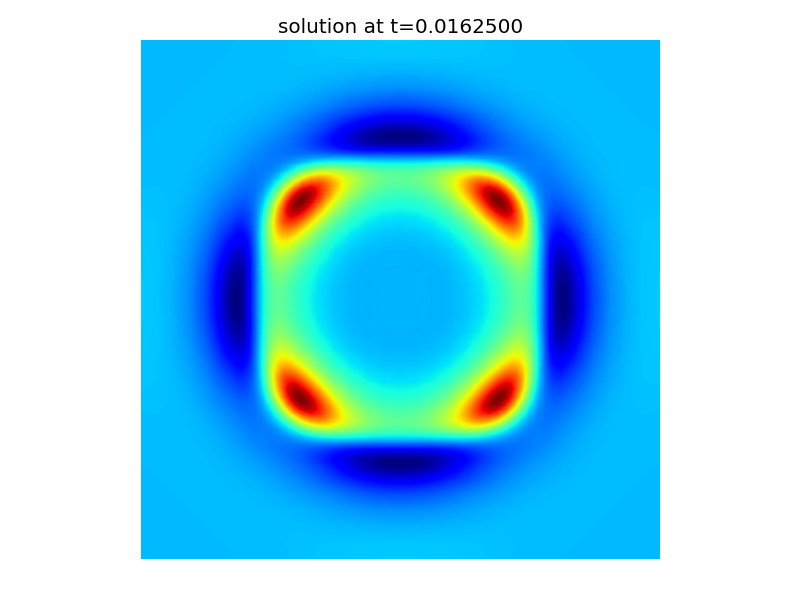}}\hspace{1pt}
  \subfloat{\clippedFig{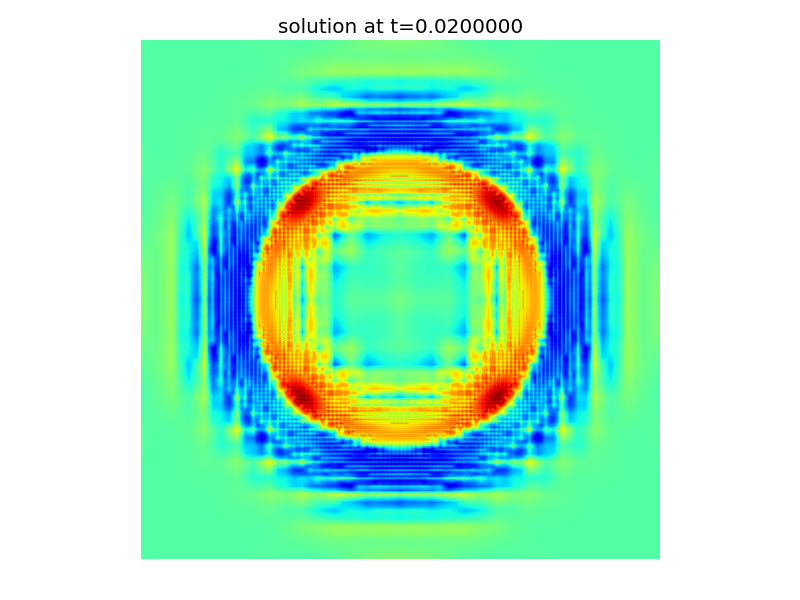}}\\[-5pt]
  \rotatebox{90}{{\scriptsize Spatial mesh}}&\hspace{-12pt}
  \subfloat{\clippedPlot{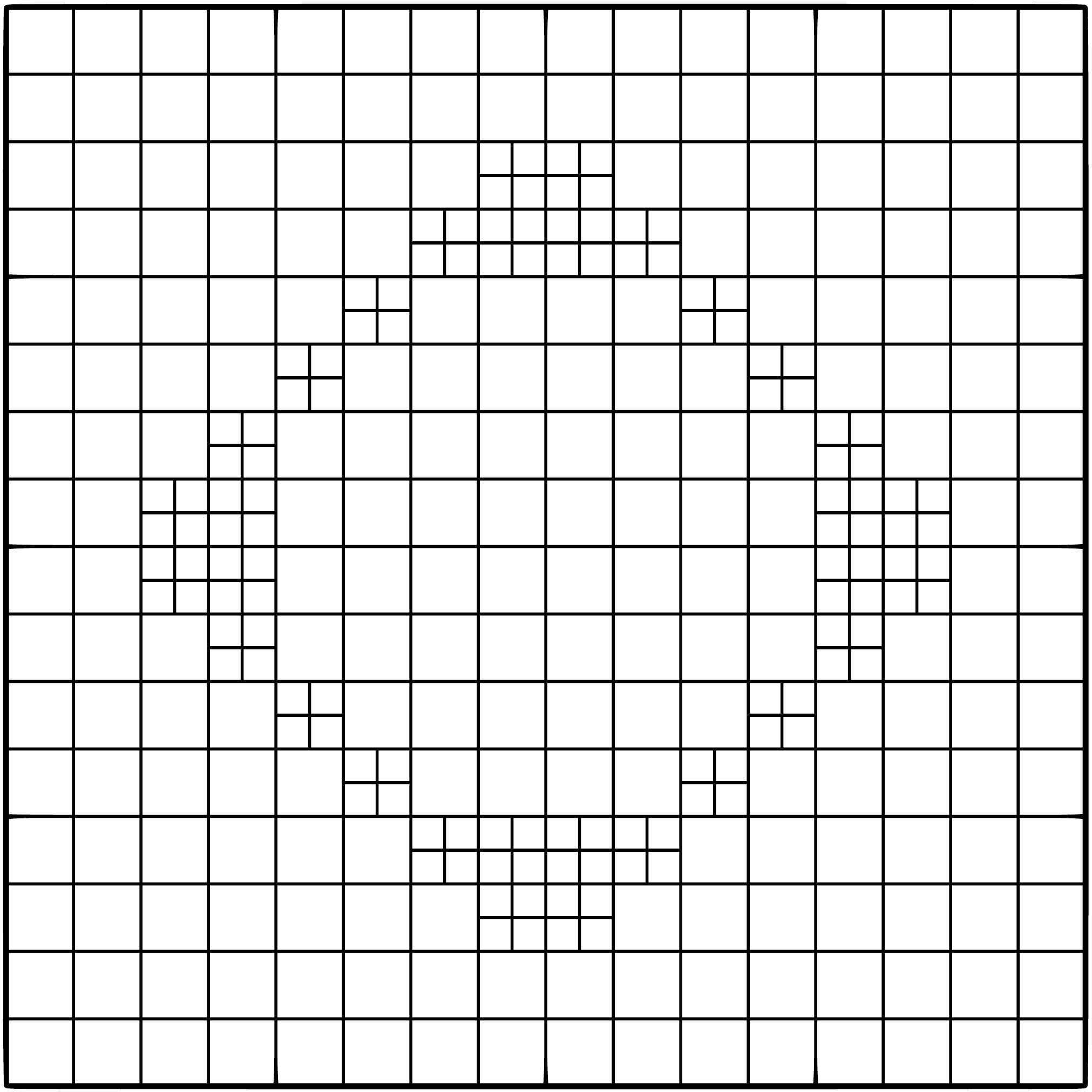}}\hspace{2pt}
    \subfloat{\clippedPlot{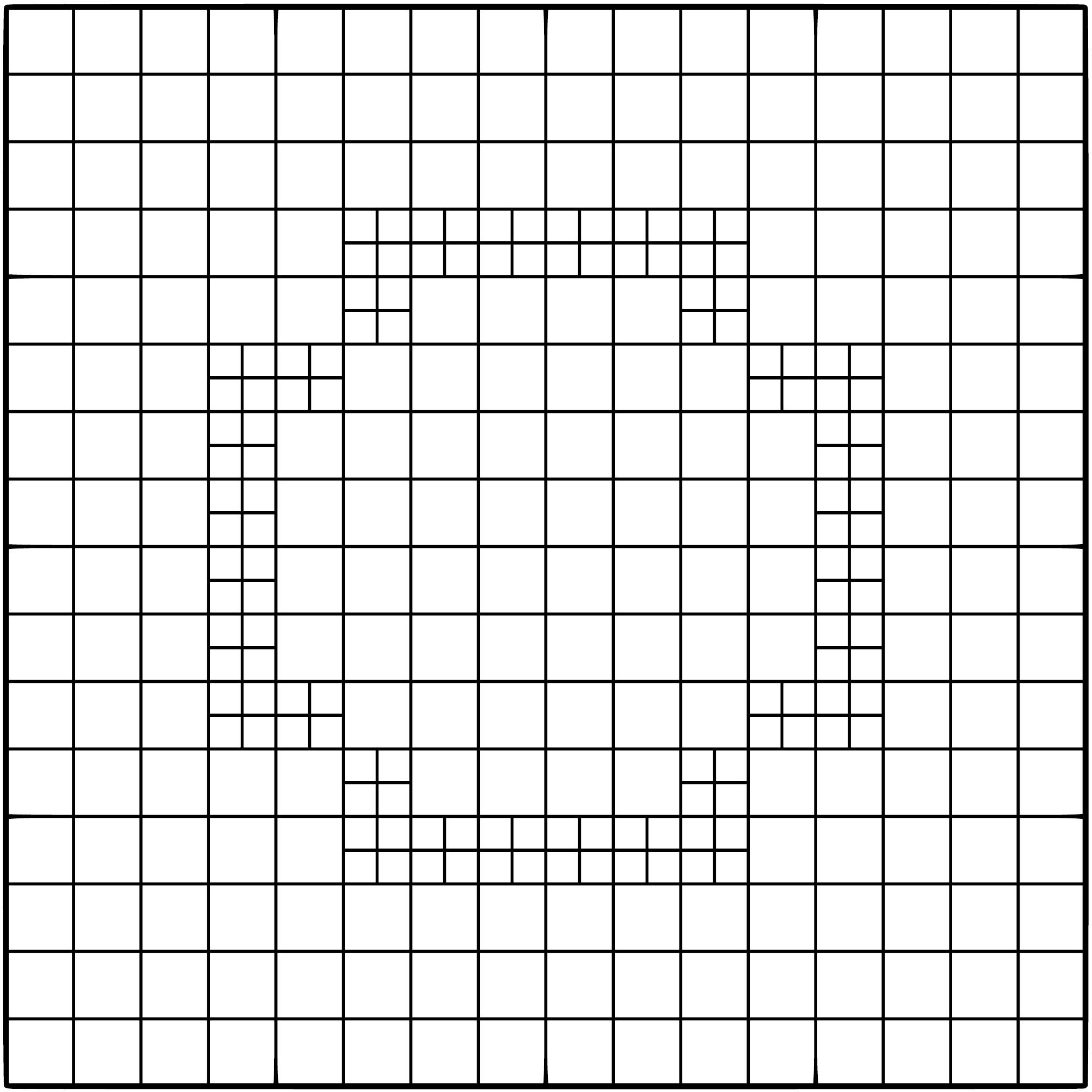}} \hspace{2pt}
  \clippedPlot{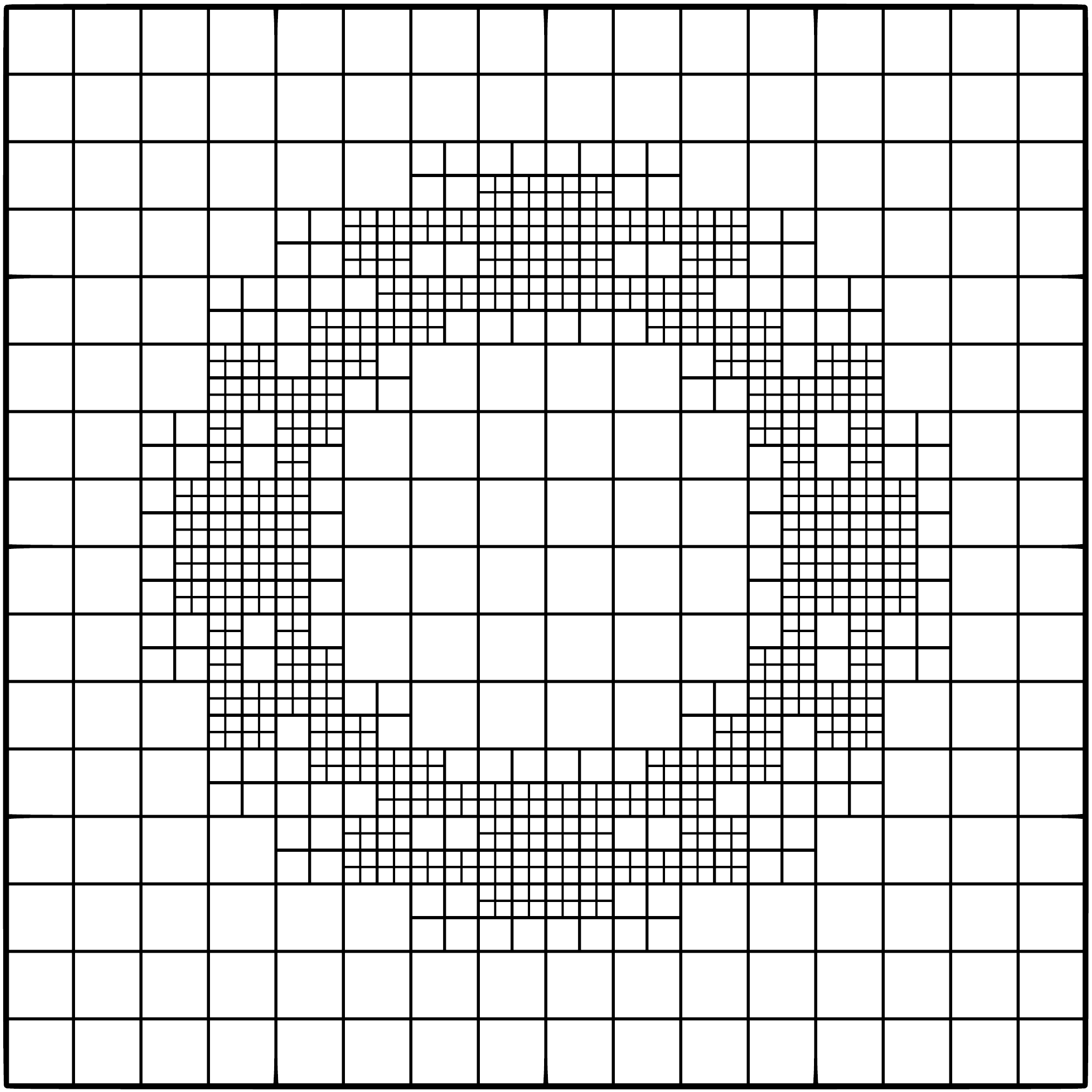}\hspace{2pt}
  \clippedPlot{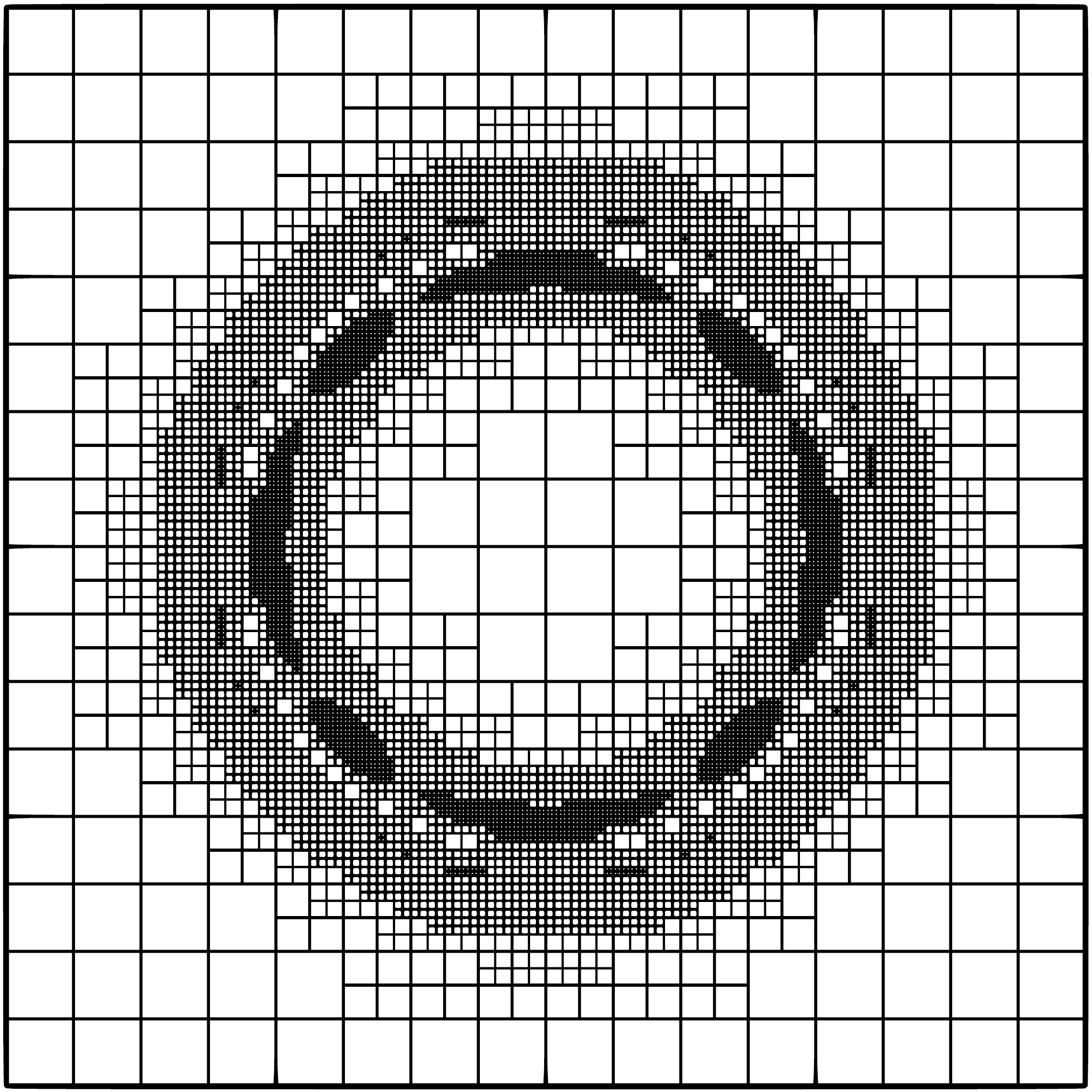}\\
  & {\scriptsize \hspace{24pt}$t=0.0$\hspace{60pt}$t=0.01$\hspace{58pt}$t=0.016$\hspace{55pt}$t=0.02$ }\\[-8pt]
\end{tabular}
\caption{{\small Shrinking ring: snapshot of the reference solution (first row), the adaptive primal solution (second row),  the adaptive dual solution (third row) and the computational mesh (fourth row) at several time points.}}\label{fig:AC_adp}
\end{figure}
\par
Snapshots of the results are presented in \Cref{fig:AC_adp}. The first row shows snapshots of the reference solution, while the second and fourth row show the primal approximation and the corresponding adaptive mesh obtained by the proposed adaptive algorithm. The computed dual solution $z$ is displayed in the third row. It can be observed that the dual solution grows as time progresses. This growth is localized at the interface of the outer ring. Let us mention that, for better visualization, in the plots of the dual solution the range of the color bars is adapted to each plot. \Cref{fig:ac_adp_step} displays the various time steps over time for the shrinking ring. As expected, smaller time steps are need at the vanishing moment of the inner ring and towards the final time $T$. Furthermore, the interfaces in the solution are well-resolved throughout the simulation time, with a significant increase of resolution towards the final time $T$. 
\par
The convergence of the error estimate and the error for the duality-based adaptive algorithm in comparison to uniform space-time refinement is shown in \Cref{fig:ac_adp_est} where `Total dof' refer to total number of degrees of freedom $\sum_{k=0}^N M_k$. Note that the error exhibits a plateau for the most refined approximations because the accuracy of the adaptively-refined approximations surpasses that of the reference approximation.
\begin{figure}[htbp]
\begin{minipage}{.49\textwidth}
\centering
\includegraphics[scale=0.33]{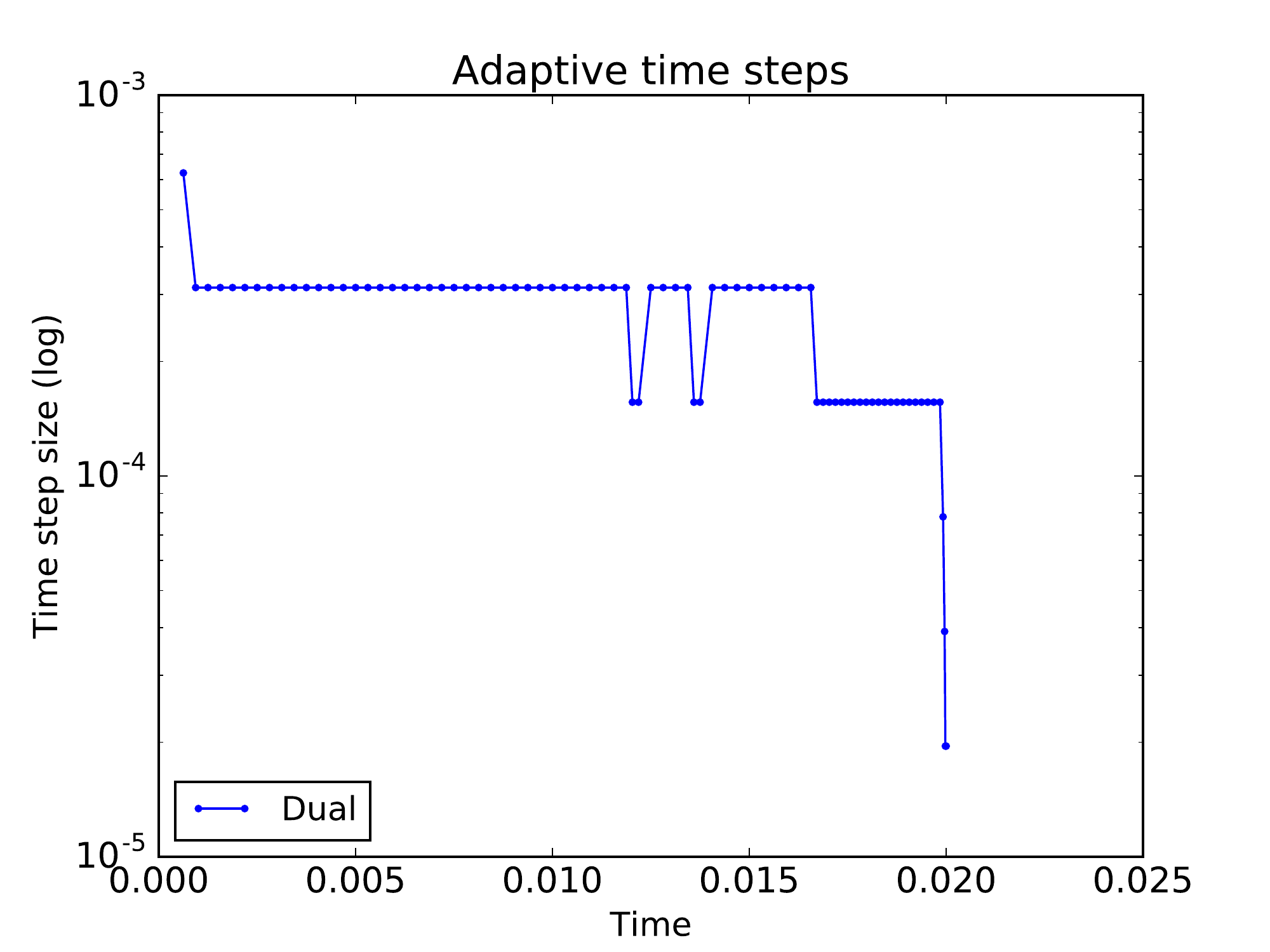} 
\caption{{\small Adaptive time step refinement for the duality-based adaptive algorithm}}\label{fig:ac_adp_step}
\end{minipage}
\hspace{2pt}
\begin{minipage}{.48\textwidth}
\centering
\includegraphics[scale=0.33]{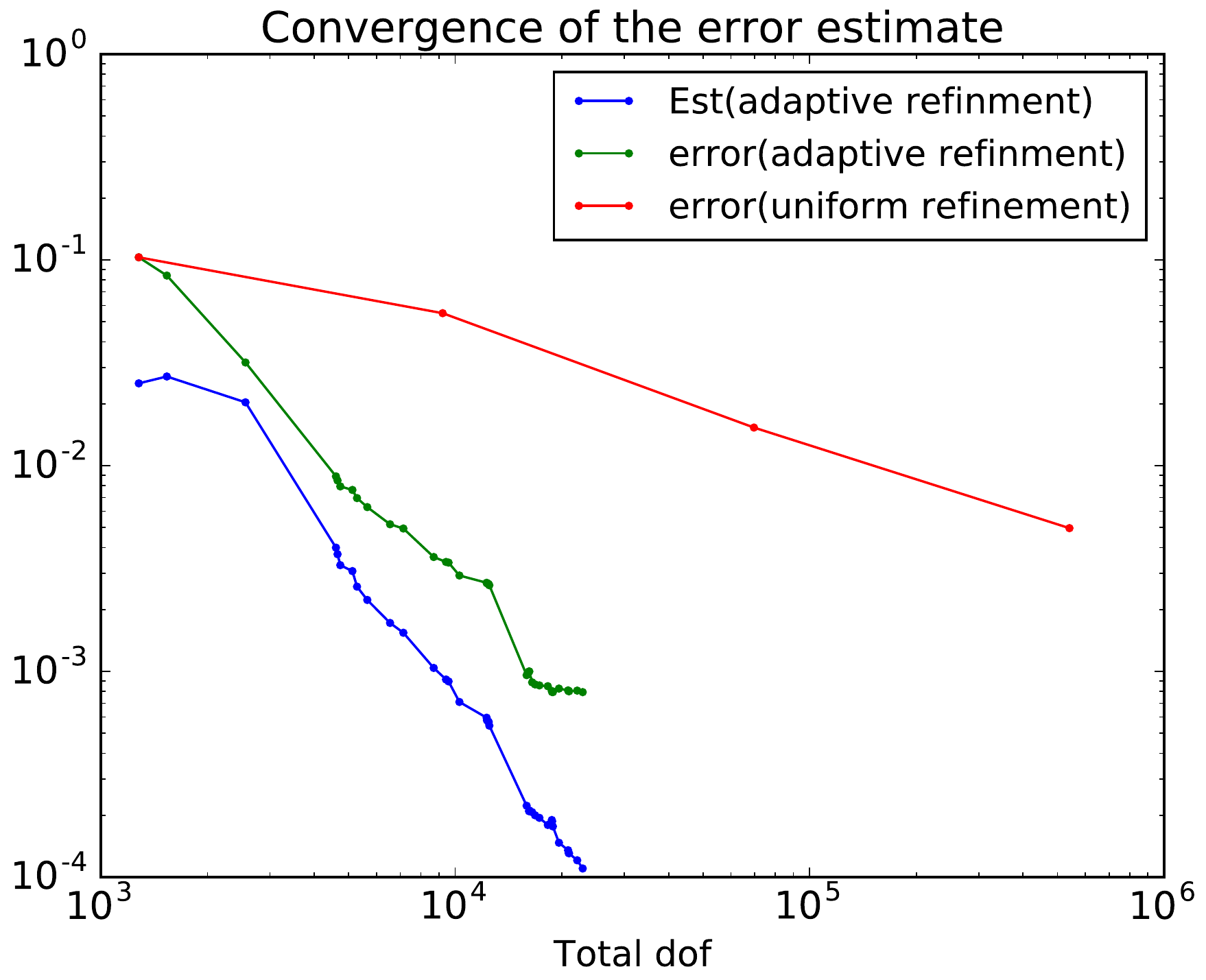}
\caption{{\small Convergence of the error estimate for the duality-based adaptive algorithm}}\label{fig:ac_adp_est}
\end{minipage}
\end{figure}

\section{Conclusion}
In this work we carried out a comprehensive study of duality-based a posteriori error estimates for semi-linear parabolic problems, with a special focus on discretizations using the finite element method in space combined with IMEX time stepping. 
We introduced a decomposition of the error estimates to identify the separate error contributions due to temporal and spatial approximation. The key idea is to adapt the residual decomposition by Verf{\"{u}}rth to our duality-based error representation and propose a specially-tailored time-discrete dual problem. The resultant error indicators quantify the spatial and temporal discretization errors and provide information to drive adaptive mesh refinement and adaptive time-step selection. \par
To illustrate the performance of the duality-based error estimates and the proposed adaptive algorithm, we presented numerical experiments for the heat equation and Allen--Cahn equation. We refer to~\cite[Section~6.3]{WuPhD2017} for the application to systems. The numerical results verified the accuracy and the effectivity of the error estimate in test problems. We also observed the overall good quality of the adaptive algorithm.\par
The proposed methodology can be further extended to other finite difference time-stepping schemes which do not fit in our considered abstract setting, e.g. higher-order multi-stage Runge-Kutta schemes. The key challenge in any such extension is the derivation of a specially-tailored time-discrete dual problem. Our analysis indicates that these dual problems can be derived systematically by means of ``backwards'' summation-by-parts on the time-discrete system.


\bibliographystyle{siamplain}
\bibliography{references}
\end{document}